\documentclass[a4paper, 10pt]{amsart}
\calclayout

\usepackage{amssymb,mathrsfs,mathtools} 
\usepackage[
backend=biber,
style=alphabetic,
sorting=nyt,
backref=false,
url=false,
doi=false,
isbn=false,
giveninits=true]{biblatex}

\renewbibmacro{in:}{
  \ifentrytype{article}{}{\printtext{\bibstring{in}\intitlepunct}}
  }

  \DeclareFieldFormat*{title}{\mkbibemph{#1}}
  \DeclareFieldFormat*{journaltitle}{#1}
  \DeclareFieldFormat[article]{volume}{\mkbibbold{#1}}
  \DeclareFieldFormat[article]{number}{\bibstring{number}~#1}

  \renewbibmacro*{journal+issuetitle}{
    \usebibmacro{journal}
    \setunit*{\addspace}
    \iffieldundef{series}
      {}
      {\newunit
        \printfield{series}
    \setunit{\addspace}}
    \printfield{volume}
    \setunit{\addspace}
    \usebibmacro{issue+date}
    \setunit{\addcolon\space}
    \usebibmacro{issue}
    \setunit{\addcomma\space}
    \printfield{number}
    \setunit{\addcomma\space}
    \printfield{eid}
    \newunit
  }

  \AtEveryBibitem{\ifentrytype{book}{\clearfield{pages}}{}}
  
\DeclareNameAlias{author}{family-given}
\DeclareNameAlias{editor}{family-given}
\DeclareNameAlias{translator}{family-given}

\usepackage{bm}
\usepackage{enumitem}
\usepackage{tensor}
\usepackage{tikz-cd}
\usepackage{hyperref}
\hypersetup{
  colorlinks=true,
  linkcolor={red!50!black},
  filecolor=magenta,
  urlcolor=cyan,
  citecolor={blue!80!black}
}
\theoremstyle{plain}

\newtheorem{theorem}     [equation]  {Theorem}
\newtheorem{proposition} [equation]  {Proposition}
\newtheorem{lemma}       [equation]  {Lemma}

\newtheorem*{theorem*}               {Theorem}
\newtheorem*{proposition*}           {Proposition}
\newtheorem*{lemma*}                 {Lemma}
\newtheorem*{corollary*}             {Corollary}

\theoremstyle{definition}

\newtheorem{definition/} [equation]  {Definition}
\newenvironment{definition}{ \pushQED{\qed}\begin{definition/}} {\popQED\end{definition/}}

\newtheorem*{definition*/}           {Definition}
\newenvironment{definition*}{ \pushQED{\qed}\begin{definition*/}} {\popQED\end{definition*/}}

\theoremstyle{remark}

\newtheorem{remark/}     [equation]  {Remark}
\newenvironment{remark}{ \pushQED{\qed}\begin{remark/}} {\popQED\end{remark/}}
\newtheorem{example/}    [equation]  {Example}
\newenvironment{example}{ \pushQED{\qed}\begin{example/}} {\popQED\end{example/}}

\newtheorem*{remark*/}               {Remark}
\newenvironment{remark*}{ \pushQED{\qed}\begin{remark*/}} {\popQED\end{remark*/}}
\newtheorem*{example*/}              {Example}
\newenvironment{example*}{ \pushQED{\qed}\begin{example*/}} {\popQED\end{example*/}}

\newtheorem*{question*}              {Question}

\numberwithin{equation}{section}

\newenvironment{enumeratea}{
  \begin{enumerate}[label = (\alph*)]}{
  \end{enumerate}}

\newcommand{\ol}[1]{\overline{#1}}

\newcommand{\bb}[1]{\mathbb{#1}}
\newcommand{\cl}[1]{\mathcal{#1}}

\newcommand{\fk}[1]{\mathfrak{#1}}

\newcommand{\ti}[1]{\tilde{#1}}

\newcommand{\rra}{\rightrightarrows}

\newcommand{\todash}{\dashrightarrow}
\newcommand{\xar}[1]{\xrightarrow[]{#1}}
\newcommand{\xarl}[1]{\xleftarrow[]{#1}}
\newcommand{\fiber}[2]{\tensor[_{#1}]{\times}{_{#2}}}
\newcommand{\germ}[2]{[#1]_{{#2}}}
\newcommand{\sslash}{\mathbin{/\mkern-6mu/}}

\newcommand{\define}[1]{\emph{#1}}

\newcommand{\cat}[1]{\mathsf{#1}}
\newcommand{\mor}[1]{\underline{\cat{#1}}}

\newcommand{\R}{\mathbb{R}}
\newcommand{\C}{\mathbb{C}}
\newcommand{\Z}{\mathbb{Z}}

\DeclareMathOperator{\Bis}{Bis}
\DeclareMathOperator{\colim}{colim}
\DeclareMathOperator{\dom}{dom}
\DeclareMathOperator{\Diff}{Diff}
\DeclareMathOperator{\Hol}{Hol}

\DeclareMathOperator{\GL}{GL}

\DeclareMathOperator{\pr}{pr}

\DeclareMathOperator{\SO}{SO}

\DeclareMathOperator{\U}{U}

\begin{document}


\title{Diffeological Riemannian orbifolds}
\author{David Miyamoto}
\address{D. Miyamoto \\
  Queen's University \\
  Kingston, Ontario, Canada}
\email{d.miyamoto@queensu.ca}

\subjclass[2020]{Primary 58A40; Secondary 58A03, 58H05, 18F15}

\keywords{Diffeology, Riemannian metric, differentiable stack}
\date{\today}


\begin{abstract}
  We show that the data of a Riemannian metric on a differentiable stack presented by an orbifold groupoid is equivalent to the data of a Riemannian metric on its diffeological orbit space. As a consequence, we conclude that the classical notion of Riemannian orbifold is equivalent to that of a Riemannian diffeological orbifold. We use the framework for Riemannian diffeology introduced by Kuribayashi, Sakai, and Shiobara, and our result answers a problem they posed in the affirmative. More generally, we show that a Riemannian metric on a Lie groupoid, namely a 2-metric in the sense of del Hoyo and Fernandes, induces a Riemannian metric on its diffeological orbit space only if the Lie groupoid is regular, and that properness is a sufficient but not necessary condition.
\end{abstract}

\maketitle

\tableofcontents

\section{Introduction}
\label{sec:introduction}

The goal of this article is to compare two definitions of a Riemannian metric on a singular space. This begs the definition of ``Riemannian metric'' and ``singular space.'' The former, in the category of smooth manifolds $\cat{Mfld}$, is unambiguous: a Riemannian metric $g$ on a smooth manifold $M$ is a smooth and positive-definite section of the symmetric square of the cotangent bundle. Equivalently, $g$ is a smooth map $g\colon TM \fiber{}{M} TM \to \R$ that restricts to an inner product on each fibre $T_xM \times T_xM$. ``Singular space'' is a less precise term. We specialize to quotients of smooth manifolds, viewed as differentiable stacks and as diffeological spaces.

The category of diffeological spaces $\cat{Dflg}$, introduced by Souriau \cite{Sour80}, is a quasi-topos that includes $\cat{Mfld}$ as a full subcategory. As such, every quotient of a diffeological space, and thus of a manifold, inherits a natural diffeology even when it does not inherit a manifold structure. For example, the following spaces carry non-trivial diffeologies:
\begin{itemize}
\item The folded line $\R/\Z_2$, where $\Z_2$ acts by reflection.
\item The leaf space $M/\cl{F}$, where $M$ is the M\"{o}bius band $((0,1) \times \R)/((x,y) \sim (1-x,y+1))$ and $\cl{F}$ is the foliation induced by the vertical lines in $(0,1) \times \R$.
\item The quotients $\R^{2n}/\SO(2n)$ and $\R^{2n}/\U(n)$, where $\U(n)$ acts on $\R^{2n}$ by identifying $\R^{2n} \cong \C^n$.
\end{itemize}
In $\cat{Dflg}$, the spaces $\R/\Z_2$ and $M/\cl{F}$ are isomorphic, and $\R^{2n}/\SO(2n)$ and $\R^{2n}/\U(n)$ are equal, but $\R/\Z_2$ is not isomorphic to $\R^{2n}/\SO(2n)$ for any $n$. One may contrast this with the fact that they are all homeomorphic topological spaces. We review diffeology in Section \ref{sec:riemannian-orbifolds}; for now, it suffices to know that the data of a diffeological space consists of a set $X$ and a collection of maps $p\colon U \to X$, called \define{plots}, whose domains $U$ range over all open subsets of all Euclidean spaces $\R^n$, $n\geq 0$. The morphisms in $\cat{Dflg}$ are called smooth maps, and the isomorphisms are called diffeomorphisms.

The folded line $\R/\Z_2$, hence also $M/\cl{F}$, is an example of a \define{diffeological orbifold}, namely a diffeological space that is locally diffeomorphic to the quotients $\R^n/\Gamma$, where $\Gamma$ is a finite subgroup of $\GL(n)$. The adjective ``diffeological'' is necessary because the notion of an orbifold predates diffeology. Satake \cite{Sat56} introduced orbifolds (as V-manifolds) in the 1950s, as sets equipped with atlases whose charts take values in the topological spaces $\R^n/\Gamma$. As shown in \cite{IglKarZad10}, the data of a set equipped with a maximal orbifold atlas, which we call a \define{classical orbifold}, is equivalent to the data of a diffeological orbifold.

In the article \cite{KurSakShiob25}, Kuribayashi, Sakai, and Shiobara introduced the notion of a (weak) Riemannian metric on a diffeological space. We begin with the observation that the domain of a Riemannian metric on a smooth manifold $M$ is obtained by applying the endofunctor $\hat{T}_2 \coloneqq \hat{T} \fiber{}{1} \hat{T} \colon \cat{Mfld} \to \cat{Mfld}$. Here we are using $\hat{T}$ to denote the tangent functor on $\cat{Mfld}$. Writing $y\colon \cat{Mfld} \to \cat{Dflg}$ for the inclusion functor, we take the left Kan extension of $y\circ \hat{T}_2$ along $y$ to obtain an endofunctor $T_2\colon \cat{Dflg} \to \cat{Dflg}$ that fits into the diagram
\begin{equation*}
  \begin{tikzcd}
    \cat{Dflg} & \cat{Dflg} \\
    \cat{Mfld} & \cat{Mfld}.
    \ar["T_2", from=1-1, to=1-2]
    \ar["y", from=2-1, to=1-1]
    \ar["y", from=2-2, to=1-2]
    \ar["\hat{T}_2", from=2-1, to=2-2]
  \end{tikzcd}
\end{equation*}
Kuribayashi, Sakai, and Shiobara define (\cite[Definition 3.1]{KurSakShiob25}):
\begin{definition*}
  A \define{weak Riemannian metric} on a diffeological space $X$ is a smooth map $g\colon T_2X \to \R$ such that, for each plot $p\colon U \to X$, the pullback $p^*g \coloneqq g\circ T_2p \colon T_2U \to \R$ is symmetric and positive.
\end{definition*}

A \define{Riemannian diffeological orbifold} is a diffeological orbifold $X$ equipped with a weak Riemannian metric $g$ for which the pullbacks $p^*g$ are Riemannian metrics whenever $p$ belongs to a certain distinguished sub-collection of plots. In \cite[{(P3)}]{KurSakShiob25}, the authors ask whether this definition of Riemannian diffeological orbifold is equivalent to the definition of a Riemannian classical orbifold (for a precise statement of their question, see the end of Subsection \ref{sec:find-induc-metr}). To answer this question, we work in the greater generality of differentiable stacks.

Differentiable stacks, henceforth simply ``stacks,'' may be intrinsically defined as categories fibred in groupoids satisfying sheaf-like and descent conditions. However, conveniently, each stack is also presented by a Lie groupoid. Put rigorously, there is an equivalence of (weak) 2-categories from the weak 2-category of Lie groupoids (whose 1-morphisms are principal bibundles), to the strict 2-category of stacks. We denote the stack corresponding to a Lie groupoid $\cl{G} \rra M$ by $[M \sslash \cl{G}]$.

The relevant Lie groupoids in the example above are the action groupoids $\R \rtimes \Z_2$, $\R^{2n} \rtimes \SO(2n)$, and $\R^{2n} \rtimes U(n)$, and the holonomy groupoid $\Hol(\cl{F})$. The stacks $[\R\sslash\Z_2]$ and $[M\sslash\cl{F}]$ are isomorphic, but the stacks $[\R^{2n}\sslash\SO(2n)]$ and $[\R^{2n}\sslash \U(n)]$ are not. The Lie groupoid $\R \rtimes \Z_2$ is an example of an \define{orbifold groupoid}, namely an \'{e}tale and proper Lie groupoid.

In \cite{HoyLoj19}, del Hoyo and Fernandes defined Riemannian metrics on a stacks via 2-metrics. A 2-metric on $\cl{G}$ is a Riemannian metric $\eta^2$ on the space of composeable arrows $\cl{G}^{(2)}$ satisfying some compatibility conditions with the groupoid structure, in particular with the groupoid multiplication. A 2-metric induces a Riemannian metric $\eta$ on the base $M$, which in turn defines a (not-necessarily-smooth) map $\eta^N \colon N_2\cl{F} \coloneqq N\cl{F} \fiber{}{M} N\cl{F} \to \R$, where $N\cl{F}$ is the (possibly singular) normal bundle to the singular foliation $\cl{F}$ of $M$ given by the $\cl{G}$-orbits. We declare two 2-metrics to be equivalent if they induce the same map $N_2\cl{F} \to \R$. Del Hoyo and Fernandes define (\cite[Section 6]{HoyLoj19}):
\begin{definition*}
  A \define{Riemannian metric} on the stack $[M\sslash\cl{G}]$ is an equivalence class of a 2-metric on $\cl{G}$.
\end{definition*}

If $\cl{G}$ is an orbifold groupoid, then the data of a 2-metric on $\cl{G}$ is equivalent to the data of a Riemannian classical orbifold $M/\cl{G}$. Since $M / \cl{G}$ also carries its quotient diffeology, we ask whether it is canonically a Riemannian diffeological orbifold. In Section \ref{sec:defin-an-induc}, we show that there is a natural map $\lambda\colon N_2\cl{F} \to T_2(M/\cl{G})$, and we declare (Definition \ref{def:induces}) that an equivalence class of 2-metrics $[\eta^2]$ on $\cl{G} \rra M$ \define{induces} a weak Riemannian metric $\ol{\eta}$ on $M/\cl{G}$ if the following diagram commutes:
\begin{equation*}
  \begin{tikzcd}
     N_2\cl{F} & \R \\
     T_2(M/\cl{G}) &
     \ar["\eta^N", from=1-1, to=1-2]
     \ar["\lambda", from=1-1, to=2-1]
     \ar["\ol{\eta}"', from=2-1, to=1-2]
  \end{tikzcd}
\end{equation*}
Our findings may be summarized thus:
\begin{enumerate}[label = (\alph*)]
\item (Proposition \ref{prop:regularity-is-necessary}) If $\cl{G} \rra M$ has orbits of different dimensions within the same component of $M$, then a Riemannian metric on the stack $[M\sslash\cl{G}]$ cannot induce a weak Riemannian metric on $M/\cl{G}$. This is the case for the action groupoids $\R^{2n} \rtimes \SO(n)$.
\item (Theorem \ref{thm:regular-proper-induces-metric}) If $\cl{G} \rra M$ is a proper and regular Lie groupoid, then every Riemannian metric on $[M\sslash\cl{G}]$ induces a weak Riemannian metric on $M/\cl{G}$. Note that $M/\cl{G}$ is a diffeological orbifold.
\item (Theorem \ref{thm:orbifold-groupoid-induces-metric}) Furthermore, if $\cl{G} \rra M$ is an orbifold groupoid, then every Riemannian metric on $M/\cl{G}$ is induced by a Riemannian metric on $[M\sslash\cl{G}]$.
\end{enumerate}
Items (b) and (c), when applied to orbifold groupoids, imply that the notions of Riemannian metric on the stack $[M\sslash\cl{G}]$, on the diffeological orbifold $M/\cl{G}$, and on the classical orbifold $M/\cl{G}$, are all equivalent. On the other hand, item (a) shows that there is an unavoidable obstruction to generalizing (b) and (c). Moving beyond orbifolds, we observe (Proposition \ref{prop:metrics-descend-for-Q-groupoids}) that if $\Gamma$ is a countable group acting smoothly and freely on a manifold $M$, then $M \rtimes \Gamma$ also satisfies the conclusions of (b) and (c). For an example, we may take $M = \R$ and $\Gamma = \Z + \alpha\Z$ for irrational $\alpha$, and gain a complete understanding of the weak Riemannian metrics on the irrational torus $\R / (\Z + \alpha\Z)$. It would be interesting to have a uniform description of those Lie groupoids $\cl{G}$ that satisfy the conclusions of (b) and (c). 

Riemannian metrics on singular spaces have appeared throughout the literature. Satake \cite{Sat57} introduced Riemannian metrics on orbifolds simultaneously with his definition of orbifold. Shortly thereafter Reinhart \cite{Rein59} introduced Riemannian foliations, which are foliated manifolds $(M, \cl{F})$ equipped with a ``bundle-like'' metric on $M$ that is intended to model a metric on the leaf space $M/\cl{F}$. Indeed, a bundle-like metric is precisely a 0-metric on $\Hol(\cl{F})$. Gallego, Gualandri, Hector, and Reventos \cite{GalGualHecRev89} axiomatized the Riemannian structure of $\Hol(\cl{F})$, producing a first notion of Riemannian groupoid. This is a Lie groupoid equipped with a 1-metric, in the sense of \cite{HoyFer18}. Pflaum, Posthuma, and Tang \cite{PflPosTan14} distinguished 0-metrics on Lie groupoids, calling them ``transversally invariant Riemannian metrics.'' We already mentioned that we use del Hoyo and Fernandes's notion of Riemannian groupoid (stack) \cite{HoyFer18,HoyLoj19}. Recently, Holtrop \cite{Hol25} introduced a more general notion of Riemannian groupoid.

Riemannian diffeology is a relatively younger development. Iglesias-Zemmour proposed a definition of Riemannian metric on a diffeological space in \cite{Igl23}. The distinction between \cite{Igl23} and \cite{KurSakShiob25} lies in the notion of definiteness (cf., Remarks \ref{rem:other-definitions-of-metric} and \ref{rem:other-definitions-of-definite}). Goldammer and Welker \cite{GolWel21} gave another definition of Riemannian diffeological space. Its relation to Kuribayashi, Sakai, and Shiobara's notion remains to be explored.

The paper is structured as follows. In Section \ref{sec:riemannian-orbifolds}, we introduce orbifolds as diffeological spaces and as stacks. This includes a review diffeology, Lie groupoids and stacks. In Section \ref{sec:riemannian-metrics}, we recall the definitions of a Riemannian metric on a diffeological space and on a stack. In Section \ref{sec:induc-riem-metr}, we formally define what it means for a Riemannian metric on a stack to induce/descend to a weak Riemannian metric on the corresponding diffeological orbit space, and establish our main results (a), (b), and (c) enumerated above.

\subsection{Acknowledgements}
\label{sec:acknowledgements}

I would like to thank Katsuhiko Kuribayashi and Yusuke Shiobara for helpful discussions explaining their work.

\section{Orbifolds}
\label{sec:riemannian-orbifolds}

\subsection{Diffeological orbifolds}
\label{sec:defin-an-orbif}

We begin with some diffeological prerequisites. The standard textbook for diffeology is \cite{Igl13} and its reprint \cite{Igl22}. Given a category $\cat{C}$ and objects $c,c' \in \cat{C}$, we write $\mor{C}(c,c')$ for the set of morphisms from $c$ to $c'$. Let $\cat{Eucl}$ denote the category of all open subsets of Euclidean spaces, i.e., all open subsets $U \subseteq \R^n$ for all $n \geq 0$. A morphism $f \in \mor{Eucl}(U,V)$ is a $C^\infty$-smooth map $f\colon U \to V$. A \define{diffeological space} is a set $X$ equipped with an assignment $\cl{D}$ that associates to each $U \in \cat{Eucl}$ a subset $\cl{D}_U \subseteq \underline{\cat{Set}}(U,X)$ such that:
\begin{itemize}
\item (Concreteness) For each $U$ and each $x \in X$, the constant map $r \mapsto x$ is in $\cl{D}_U$.
\item (Pre-sheaf) For each $f \in \underline{\cat{Eucl}}(U,V)$, we have $f^*\cl{D}_V \subseteq \cl{D}_U$.
\item (Sheaf) For each $U$ and open cover $\{U_\alpha\}$, if $f\colon U \to X$ is a map, and $f|_{U_\alpha} \colon U_\alpha \to X$ is in $\cl{D}_{U_\alpha}$ for each $\alpha$, then $f\in \cl{D}_U$. 
\end{itemize}
We call the elements of $\bigsqcup_{U \in \cat{Eucl}} \cl{D}_U$ the \define{plots} of $X$. A diffeology is determined by its plots, so we sometimes abuse notation and write $\cl{D}$ for the union $\bigsqcup_{U \in \cat{Eucl}} \cl{D}_U$.

\begin{remark}
  If we equip $\cat{Eucl}$ with the Grothendieck topology determined by the usual open covers of open sets, we can alternatively define a diffeological space as a concrete sheaf on the site $\cat{Eucl}$. This is explored in \cite{BaezHof11}.
\end{remark}

A \define{smooth map} between diffeological spaces $(X, \cl{D}^X)$ and $(Y, \cl{D}^Y)$ is a map $f\colon X \to Y$ such that $f_*\cl{D}^X \subseteq \cl{D}^Y$. We then have the category $\cat{Dflg}$, whose objects are diffeological spaces, and whose morphisms are the smooth maps.

By forgetting the diffeology, we get the forgetful functor $|\cdot| \colon \cat{Dflg} \to \cat{Set}$. This has both a left and right adjoint, which arise from the two primitive examples of diffeologies.

\begin{example}
Let $X$ be a set. The \define{discrete} diffeology $\cl{D}^\circ$ is the smallest possible diffeology: $\cl{D}^\circ_U$ consists only of the locally constant maps $U \to X$. The functor $X \mapsto (X, \cl{D}^\circ)$ is a left-adjoint to the forgetful functor. The \define{coarse} diffeology $\cl{D}^\bullet$ is the largest possible diffeology: $\cl{D}^\bullet_U = \underline{\cat{Set}}(U,X)$. The functor $X \mapsto (X, \cl{D}^\bullet)$ is a right-adjoint to the forgetful functor.
\end{example}

We also have a functor $D\colon \cat{Dflg} \to \cat{Top}$ which assigns to $X$ its \define{D-topology}, which is the final topology determined by the plots. Thus $A\subseteq X$ is D-open if and only if $p^{-1}(A)$ is open in $\dom(p)$ for all plots $p\in \cl{D}$. We note that $D$ has a left-adjoint which assigns to $T$ the diffeology $\cl{D}_U \coloneqq \mor{Top}(U,T)$, but it does not have a right-adjoint.

\begin{example}
  We define a smooth manifold as a Hausdorff and second-countable topological space equipped with a maximal smooth atlas. Each manifold $M$ has a canonical diffeology $\cl{D}_U^M \coloneqq \underline{\cat{Mfld}}(U,M)$. The functor $y\colon \cat{Mfld} \to \cat{Dflg}$ defined by $yM \coloneqq (M, \cl{D}^M)$ is full and faithful. This means that the diffeologically smooth maps between manifolds are exactly the smooth ones in the usual sense. The D-topology of $yM$ is the underlying topology of $M$. 
\end{example}

Unlike the category of smooth manifolds, the category of diffeological spaces is a quasi-topos, i.e., complete, co-complete, and (locally) Cartesian closed. In practical terms, every product, subset, coproduct, quotient, and mapping space of diffeological spaces inherits a diffeology. We summarize these constructions.

Let $X,Y$, and $Z$ be diffeological spaces and let $f\colon X \to Z$ and $g \colon Y \to Z$ be smooth maps. The \define{pullback} $X \fiber{}{Z} Y$ has underlying set $|X| \fiber{}{|Z|} |Y|$ (i.e., the usual pullback in $\cat{Set}$), so we have the diagram:
  \begin{equation*}
    \begin{tikzcd}
      {|X|} \fiber{}{{|Z|}} {|Y|} & {|Y|} \\
      {|X|} & {|Z|}.
      \ar["\pr_2", from=1-1, to=1-2]
      \ar["\pr_1", from=1-1, to=2-1]
      \ar["g", from=1-2, to=2-2]
      \ar["f", from=2-1, to=2-2]
    \end{tikzcd}
  \end{equation*}
  A map $p\colon U \to X \fiber{}{Z} Y$ is a plot if and only if $\pr_1 \circ p$ and $\pr_2 \circ p$ are plots of $X$ and $Y$, respectively.

  Subsets and products are both pullbacks. Suppose that $A$ is a subset of $X$, and let $X_1$ and $X_2$ be diffeological spaces. We have two identifications:
  \begin{equation*}
    \begin{tikzcd}
      A & X \fiber{}{\{0,1\}} \{0\} & \{0\} \\
      & X & (\{0,1\}, \cl{D}^\bullet),
      \ar["\cong", from=1-1, to=1-2]
      \ar["\pr_2", from=1-2, to=1-3]
      \ar["\pr_1", from=1-2, to=2-2]
      \ar["\chi_A", from=2-2, to=2-3]
      \ar["0\ \mapsto\ 1", from=1-3, to=2-3]
    \end{tikzcd}
    \quad
    \begin{tikzcd}
      X_1\times X_2 & X_1 \fiber{}{\{0\}} X_2 & X_2 \\
      & X_1 & \{0\}.
      \ar["\cong", from=1-1, to=1-2]
      \ar["\pr_2", from=1-2, to=1-3]
      \ar["\pr_1", from=1-2, to=2-2]
      \ar[from=2-2, to=2-3]
      \ar[from=1-3, to=2-3]
    \end{tikzcd}
  \end{equation*}
  In the first diagram $\chi_A$ is the indicator function for $A$. Thus $p\colon U \to A$ is a plot of the \define{subset diffeology} on $A$ if and only if $p \colon U \to X$ is a plot of $X$, and $p \colon U \to X_1 \times X_2$ is a plot of the \define{product diffeology} on $X_1 \times X_2$ if and only if $\pr_1 \circ p$ and $\pr_2\circ p$ are plots of $X_1$ and $X_2$, respectively.

  Let $X,Y$, and $Z$ be diffeological spaces and let $f\colon Z \to X$ and $g \colon Z \to Y$ be smooth maps. The \define{pushout} $X \amalg_Z Y$ has underlying set $|X| \amalg_{|Z|} |Y|$ (i.e., the usual pushout in $\cat{Set}$), so we have the diagram:
  \begin{equation*}
    \begin{tikzcd}
      {|X|} \amalg_{{|Z|}} {|Y|} & {|Y|} \\
      {|X|} & {|Z|}.
      \ar["\pi_2", from=1-2, to=1-1]
      \ar["\pi_1", from=2-1, to=1-1]
      \ar["g", from=2-2, to=1-2]
      \ar["f", from=2-2, to=2-1]
    \end{tikzcd}
  \end{equation*}
  We declare $p\colon U \to X \amalg_{Z} Y$ to be a plot if and only if for every $r \in U$, there is a neighbourhood $W$ of $r$ and either a plot $q_1 \colon W \to X$ or a plot $q_2 \colon W \to Y$ such that either $p|_W = \pi_1 \circ q_1$ or $p|_W = \pi_2 \circ q_2$, respectively. We say that $p$ ``locally lifts'' along $\pi_X$ or $\pi_Y$.

  Quotients and coproducts are pushouts. Suppose that $\cl{R} \subseteq X \times X$ is an equivalence relation on $X$, and let $X_1$ and $X_2$ be diffeological spaces. We have two identifications:
   \begin{equation*}
    \begin{tikzcd}
      X/\cl{R} & X \amalg_{\cl{R}} X & X \\
      & X & \cl{R},
      \ar["\cong", from=1-1, to=1-2]
      \ar["\pi", from=1-3, to=1-2]
      \ar["\pi", from=2-2, to=1-2]
      \ar["\pr_2", from=2-3, to=1-3]
      \ar["\pr_1", from=2-3, to=2-2]
    \end{tikzcd}
\quad
        \begin{tikzcd}
      X_1\amalg X_2 & X_1 \amalg_{\emptyset} X_2 & X_2 \\
      & X_1 & \emptyset.
      \ar["\cong", from=1-1, to=1-2]
      \ar["\pi_2", from=1-3, to=1-2]
      \ar["\pi_1", from=2-2, to=1-2]
      \ar[from=2-3, to=1-3]
      \ar[from=2-3, to=2-2]
    \end{tikzcd}
  \end{equation*}

  Thus $p\colon U \to X/\cl{R}$ is a plot of the \define{quotient diffeology} on $X/\cl{R}$ if and only if it locally lifts along the quotient $\pi\colon X \to X/\cl{R}$, and $p\colon U \to X_1\amalg X_2$ is a plot of the \define{coproduct diffeology} on $X_1\amalg X_2$ if and only if it is locally a plot of either $X_1$ or $X_2$.

  \begin{remark}
    Mapping spaces also carry natural diffeologies. We do not encounter these spaces in this article, but they are important to the general theory. Given diffeological spaces $X$ and $Y$, the \define{functional diffeology} on $\mor{Dflg}(X,Y)$ consists of those maps $p\colon U \to \mor{Dflg}(X,Y)$ whose adjoint map
    \begin{equation*}
      \hat{p}\colon yU \times X \to Y, \quad (r,x) \mapsto p(r)(x)
    \end{equation*}
    is smooth with respect to the product diffeology on $yU \times X$. This makes $\mor{Dflg}(X,Y)$ an exponential object in $\cat{Dflg}$.
  \end{remark}

  We also review the distinguished morphisms in $\cat{Dflg}$. We call the isomorphisms \define{diffeomorphisms}. We call the regular monomorphisms \define{inductions}. These are injective smooth maps $\iota\colon A \to X$ that restrict to diffeomorphisms $A \to \iota(A)$, where $\iota(A)$ carries the subset diffeology. We call the regular epimorphisms \define{subductions}. These are surjective smooth maps $\pi\colon X \to Y$ such that the natural bijection $X/\pi \to Y$ is a diffeomorphism, where $X/\pi$ carries the quotient diffeology. Diffeomorphisms are equivalently characterized as surjective inductions or injective subductions. For a proof that inductions and subductions are indeed regular (in fact, strong) monomorphisms and epimorphisms, see \cite[Chapter 2]{Bloh24b}.

  \begin{remark}
    Diffeology requires the terms ``induction'' and ``subduction,'' rather than ``immersion'' and ``submersion,'' because these do not coincide in $\cat{Mfld}$. The map $\R \to \R^2$ given by $t \mapsto (t^3, t^2)$ is an induction that is not an immersion; this is a subtle fact \cite{Jor82,KarMiyWat24}. The figure-eight $(-\pi/2, 3\pi/2) \mapsto \R^2$ given by $t \mapsto (2\cos(t), \sin(2t))$ is an injective immersion that is not an induction. The map $\R^2 \to \R$ given by $(x,y) \mapsto xy$ is a subduction that is not a submersion. It does hold, however, that every surjective submersion is a subduction.
  \end{remark}

   If we have a set $X$ and collection of maps $\cl{A} = \{p\colon U_p \to X\}$ such that $\bigcup_{p \in \cl{A}} p(U_p) = X$, we get the surjective map $\pi\colon \coprod_{p \in \cl{A}} U_p \to X$. We equip each $U_p$ with its canonical diffeology, so that $\coprod_{p \in \cl{A}} U_p$ is a diffeological space. The diffeology on $X$ \define{generated} is unambiguously defined by declaring $\pi$ to be a subduction (i.e., declaring the natural map $\left(\coprod_{p\in \cl{A}} U_p\right) / \pi \to X$ to be a diffeomorphism). For example, the discrete diffeology is generated by the constant maps $\R^n \to X$, and a manifold's canonical diffeology is generated by the (inverses of) the charts of any atlas.
  
Given diffeological spaces $X$ and $E$, we will write $\psi\colon X \todash E$ to denote a smooth map $\psi\colon U \to V$, where $U$ and $V$ are D-open subsets of $X$ and $E$ equipped with their sub-diffeologies. A \define{transition} is a map $\psi\colon X \todash E$ that is a diffeomorphism $U \to V$. Now let $\cl{E}$ be a collection of diffeological spaces. We say that a diffeological space $X$ is \define{modelled on }$\cl{E}$, and more succinctly call $X$ an $\cl{E}$-space, if $X$ is locally diffeomorphic to members of $\cl{E}$. Precisely, this means that for every $x\in X$, there is a space $E_x \in \cl{E}$ and a transition $\psi \colon X \todash E_x$ with $x \in \dom \psi$. The members of $\cl{E}$ need not be locally diffeomorphic to each other. We take the category $\cl{E}$-$\cat{Space}$ to be the full subcategory of $\cat{Dflg}$ whose objects are $\cl{E}$-spaces.

\begin{example}
  \label{ex:E-spaces}
  In each example below, we change the collection $\cl{E}$.
  \begin{itemize}
  \item For each $n \geq 0$, the category $\{\R^n\}$-$\cat{Space}$ is isomorphic to the category of not-necessarily-Hausdorff and not-necessarily second-countable $n$-dimensional manifolds. The isomorphism sends an $\{\R^n\}$-space $X$ to the set $X$ equipped with the maximal atlas consisting of all the transitions $\psi\colon X \dashrightarrow \R^n$.
  \item The category $\{\R^n \mid n \geq 0\}$-$\cat{Space}$ is isomorphic to the category of not-necessarily-Hausdorff and not-necessarily second-countable manifolds which may have connected components of different dimensions.
  \item This example is for those familiar with infinite dimensional geometry. For a locally convex topological vector space $E$, we declare $p$ to be a plot of $E$ if $\ell \circ p\colon U \to \R$ is smooth for all continuous linear functionals $\ell\colon E \to \R$. Let $\cl{E}$ denote the collection of all Hausdorff and Mackey complete locally convex topological vector spaces. Then $\cl{E}$-$\cat{Space}$ is isomorphic to the category not-necessarily-Hausdorff \define{convenient} manifolds modelled on $E$ as defined in \cite{KriegMic97}. For a proof, see \cite[Lemma 2.5]{Kih23} or \cite[Section 4]{Miy25}.
  \end{itemize}

  Note that we cannot recover the condition that $X$ is Hausdorff or second-countable, nor the condition that $X$ is a manifold of arbitrary but uniform dimension, by simply varying $\cl{E}$.
\end{example}

We may now introduce diffeological orbifolds.

\begin{definition}
  A \define{diffeological orbifold} is a Hausdorff and second-countable diffeological space that is locally modelled on
  \begin{equation*}
    \cl{E}_{\text{orb}} \coloneqq \{\R^n / \Gamma \mid n \geq 0, \ \Gamma \text{ is a finite subgroup of } \Diff(\R^n)\}.
  \end{equation*}
\end{definition}

Because $\Gamma$ is a finite group, we can linearize the $\Gamma$-action. Each $\R^n/\Gamma$ is itself locally modelled on
\begin{equation*}
  \{\R^n / \Gamma' \mid n\geq 0, \  \Gamma' \text{ is a finite subgroup of }\GL(n)\}.
\end{equation*}
Therefore each diffeological orbifold is also locally modelled on this smaller collection of spaces.

\begin{remark}
  We highlight some similar spaces that have appeared in the literature.
  \begin{itemize}
  \item Diffeological \define{quasifolds}, introduced in \cite{IglPrat21}, are second-countable diffeological spaces modelled on
    \begin{equation*}
      \{\R^n / \Gamma \mid \Gamma \text{ is a countable subgroup of affine transformations}\}. 
    \end{equation*}
 These include the irrational tori $T_\alpha = \R/(\Z + \alpha \Z)$ (for $\alpha$ irrational). While the D-topology of the $T_\alpha$ is trivial, their diffeology is quite rich.
  \item \define{Orthofolds}, introduced \cite{GuerIgl26}, are diffeological spaces modelled on
    \begin{equation*}
      \{\R^n/G \mid n \geq 0, \ G \text{ is a subgroup of } O(n)\}.
    \end{equation*}
  \end{itemize}
\end{remark}

The reader might be more familiar with the classical approach to orbifolds instantiated by Satake \cite{Sat56} (for a contemporary account, see \cite{Car22}). A classical orbifold consists of a Hausdorff and second-countable topological space $X$ and a distinguished collection of homeomorphisms $\cl{A} = \{X \supseteq U \xar{\psi} V \subseteq \R^n/\Gamma\}$ that serves as a maximal atlas. Iglesias-Zemmour, Karshon, and Zadka \cite{IglKarZad10} proved that every classical orbifold has a natural diffeology, with which it is a diffeological orbifold, and conversely every diffeological orbifold arises from a classical orbifold.

\subsection{Stacky orbifolds}
\label{sec:orbifold-stacks}

Another approach to handling quotients of manifolds is through differential stacks. Our review will be brief, and we point the reader to the textbook \cite{MoerMrc03} and the paper \cite{Ler10} for more details. A \define{Lie groupoid} consists of a category $\cl{G} \rra M$, a manifold structure on $M$, and a not-necessarily Hausdorff nor second-countable manifold structure on $\cl{G}$ such that: all the structure maps (source $s$, target $t$, inversion $i$, multiplication $m$, and unit $u$) are smooth, the source and target are submersions, and the source and target-fibres are Hausdorff. We call the map $(t,s)\colon \cl{G} \to M \times M$ the \define{anchor} of $\cl{G}$. To indicate an arrow $g \in \cl{G}$ with $(t,s)(g) = (y,x)$, we write $y \xarl{g} x$. We write the multiplication of $z \xarl{h} y$ and $y \xarl{g} x$ as $hg$.  As a consequence of the assumptions on $\cl{G}$, for each $x \in M$ the \define{isotropy group} $G_x \coloneqq (t,s)^{-1}(x,x)$ is an embedded submanifold of $\cl{G}$, and it is a Lie group with respect to this manifold structure.

A Lie groupoid $\cl{G}$ partitions $M$ into \define{orbits}, which are the subsets $O = O_x \coloneqq t(s^{-1}(x))$. We denote the orbit space by $M/\cl{G}$, and often write $\pi\colon M \to M/\cl{G}$ for the quotient map. Each orbit carries the structure of an immersed submanifold uniquely determined by the requirement that $t\colon s^{-1}(x) \to O$ is a principal $G_x$-bundle. The partition $\cl{F}$ of $M$ into the connected-components of the $\cl{G}$-orbits is a \define{singular foliation}. In particular, the subset $T\cl{F} \coloneqq \bigcup_{L \in \cl{F}} TL$ of $TM$ is a smooth \define{generalized distribution}. This means that for each $x \in M$, the fibre $T_x\cl{F} = T_xL$ is a vector subspace of $T_xM$, and $T\cl{F} = \{X_x \mid X \in \fk{X}(M) \text{ and } X(M) \subseteq T\cl{F}\}$; in words, $T\cl{F}$ is spanned by its smooth sections.

 \begin{example}
The following examples are standard.
    \begin{itemize}
    \item Each manifold $M$ leads to the unit groupoid $M \rra M$, whose arrows are $x \xarl{x} x$. The orbits of $M \rra M$ are singletons.
    \item More generally, a submersion $\pi \colon M \to N$ leads to the submersion groupoid $M \fiber{}{\pi} M \rra M$, whose arrows are $y \xarl{(y,x)} x$. The orbits of $M \fiber{}{\pi} M$ are the fibres of $\pi$.
    \item Each Lie group $G$ leads to a Lie groupoid $G \rra \{*\}$, whose arrows are $* \xarl{g} *$, and multiplication is the group multiplication.
    \item If a Lie group $H$ acts smoothly on a manifold $M$ from the right (or left), we can form the action groupoid $M \rtimes H$, which has the structure $M \times H \rra M$, and whose arrows are $x \xarl{(x,h)} x \cdot h$. The multiplication is $(x,h)(x \cdot h, g) = (x,hg)$. The orbits of $M \rtimes H$ are the orbits of $H$.
    \end{itemize}
  \end{example}
The next example will let us associate a Lie groupoid to a diffeological orbifold.
  \begin{example}
    Let $\Psi$ be a collection of transitions $M \todash M$ such that:
    \begin{itemize}
    \item $\Psi$ is closed under inversion, composition (defining $\dom(\psi' \circ \psi) \coloneqq \psi^{-1}(\dom \psi')$), and restriction of domain to open subsets;
    \item $\Psi$ contains $1_M$;
    \item If $\psi\colon M \todash M$ is a transition, and there is an open cover $\{U_\alpha\}$ of $M$ such that $\psi|_{U_\alpha} \colon U_\alpha \todash M$ is in $\Psi$ for each $\alpha$, then $\psi \in \Psi$.
    \end{itemize}
    We call such $\Psi$ a \define{pseudogroup}. Any collection of transitions generates a pseudogroup.

    The \define{germ groupoid} $\Gamma(\Psi) \rra M$ has for arrows the germs of the elements of $\Psi$. Denote the germ of $\psi$ at $x$ by $\germ{\psi}{x}$. The arrows of $\Gamma(\Psi)$ are explicitly $\psi(x) \xarl{\germ{\psi}{x}} x$ and the groupoid multiplication is germ composition. We can equip $\Gamma(\Psi)$ with a smooth atlas by declaring, for each $\psi \in \Psi$, the map $x' \mapsto \germ{\psi}{x'}$ to be a chart. This make $\Gamma(\Psi)$ a Lie groupoid. The orbits of $\Gamma(\Psi)$ are the orbits of $\Psi$.
  \end{example}
  
We are particularly interested in regular Lie groupoids.
\begin{definition}
    A Lie groupoid is \define{regular} if its anchor map has constant rank.
  \end{definition}
  \begin{lemma}
    \label{lem:regular-definition}
    A Lie groupoid is regular if and only if its underlying foliation is regular. 
  \end{lemma}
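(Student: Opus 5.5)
The plan is to compute the rank of the anchor $(t,s)\colon \cl{G} \to M \times M$ at an arrow $y \xarl{g} x$ in terms of the dimension of the orbit through $x$. Regularity of the foliation $\cl{F}$ means that $\dim T_x\cl{F}$ is locally constant in $x$. Here $T_x\cl{F} = T_x O_x$, since the leaves are the connected components of the orbits. So it suffices to show
\begin{equation*}
  \operatorname{rank} d_g(t,s) = \dim M + \dim O_x
\end{equation*}
for every arrow $y \xarl{g} x$, and that $\dim O_x$ is constant along orbits. The second point is immediate once the formula holds, since $g$ and $g^{-1}$ give the same rank. Alternatively, left translation by $g$ is a diffeomorphism $s^{-1}(y) \to s^{-1}(x)$ that intertwines the principal bundles $t$ over the common orbit $O_x = O_y$.

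To compute the rank, I will use that $s$ is a submersion. This gives a splitting $T_g\cl{G} \cong \ker d_gs \oplus H$ with $d_gs|_H\colon H \to T_xM$ an isomorphism. The image of $d_g(t,s)$ then contains $\{(d_gt(v),0) : v \in \ker d_g s\}$, and it surjects onto the second factor $T_xM$. Hence
\begin{equation*}
  \operatorname{rank} d_g(t,s) = \dim T_xM + \operatorname{rank}\bigl(d_gt|_{\ker d_gs}\bigr).
\end{equation*}
Now $\ker d_gs = T_g(s^{-1}(x))$. The restriction $t\colon s^{-1}(x) \to O_x$ is a principal $G_x$-bundle, hence a submersion onto the immersed submanifold $O_x$. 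Therefore $d_gt(T_g s^{-1}(x)) = T_yO_x$, which has dimension $\dim O_x$.

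With this formula, the anchor has constant rank if and only if $x \mapsto \dim T_x\cl{F}$ is constant. The subtle point is matching this with the paper's intended meaning of a regular singular foliation, namely that leaves have locally constant dimension. Since $M$ may be disconnected, I would interpret ``constant rank'' componentwise, or as local constancy; the two notions then agree component by component. If the intended meaning is global constancy, the identical argument applies verbatim.

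For the converse, the same identity reads $\dim T_x\cl{F} = \operatorname{rank} d_{1_x}(t,s) - \dim M$, evaluated at units. So a constant-rank anchor forces constant leaf dimension.

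The main obstacle is the step $d_gt(T_gs^{-1}(x)) = T_yO_x$. It depends on the immersed-manifold structure on $O_x$ for which $t\colon s^{-1}(x) \to O_x$ is a principal bundle, and on the fact that the inclusion $O_x \hookrightarrow M$ is an immersion. That fact is asserted earlier in the paper, so I will cite it.
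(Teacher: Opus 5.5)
Your proof is correct and is essentially the paper's: the paper computes the kernel of $(Tt,Ts)$ at $g$ as the tangent space to the $G_x$-fibre of the principal bundle $t\colon s^{-1}(x)\to O_x$ and concludes via the map $x\mapsto \dim G_x$, whereas you compute the image directly as $\dim M + \dim O_x$. By rank--nullity these are the same computation, and both rest on the fact that $t|_{s^{-1}(x)}$ is a principal $G_x$-bundle over the orbit.
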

  \begin{proof}
    This follows from the observation that $\ker_g(Tt,Ts) = T_gG_{s(g)}$. Indeed,
    \begin{align*}
      (Tt,Ts)(\xi) = 0 &\text{ iff } \xi \in Ts^{-1}(x) \text{ and } Tt(\xi) = 0 \\
      &\text{ iff } \xi \in T_{s(g)} G_x, \text{ because } t\colon s^{-1}(s(g)) \to \cl{O} \text{ is } G_x\text{-principal}.
    \end{align*}
    Therefore $(t,s)$ has constant rank if and only if the map $x \mapsto \dim (G_x)$ is constant, which holds if and only if the map $x \mapsto \dim(O_x)$ is constant.
  \end{proof}

  Since the underlying foliation $\cl{F}$ of a regular Lie groupoid $\cl{G} \rra M$ is regular, at each $x \in M$ we can find a \define{transversal} $T$ through $x$. This is an embedded submanifold $T \subseteq M$ such that for each $x' \in T$, we have $T_{x'}M = T_{x'}T \oplus T_{x'}\cl{F}$. We use the notation $T \perp \cl{F}$.  The orbit space $M/\cl{G}$ identifies with $\bigcup_{\{T \perp \cl{F}\}} \pi(T)$. Furthermore, the collection $\{\pi|_T \colon T \to M/\cl{G} \mid T \perp \cl{F}\}$ generates the quotient diffeology on $M/\cl{G}$.
  
  In analogy to a Lie group acting on a manifold, a Lie groupoid $\cl{H} \rra N$ can act along a map $b\colon P \to N$. The data of such a (right) action is a triple $(\cl{H}, b, \text{act})$, where
  \begin{equation*}
    \text{act}\colon P \fiber{b}{t} \cl{H} \to P, \quad \text{act}(p,h) = p \cdot h,
  \end{equation*}
  is smooth, the iterated action $(p\cdot h)\cdot g$ is well defined if and only if $p\cdot (hg)$ is well-defined, and both are equal, and $p\cdot u(b(p)) = p$ for all $p$. We similarly define a left action of $\cl{G} \rra M$ along $a\colon P \to M$, where the action map has domain $\cl{G} \fiber{s}{a} P$. An action of $\cl{H}$ on $P$ is captured by the action groupoid $P \rtimes \cl{H}$, which has the form $P \fiber{b}{t} \cl{H} \rra P$, and whose arrows are $p \xarl{(p,h)} p \cdot h$.

  Given a right action of $\cl{H}$ on $P$, we call a map $\pi\colon P \to B$ a \define{principal $\cl{H}$-bundle} if $\pi$ is a surjective submersion, the $\cl{H}$-orbits (in $P$) are subordinate to the fibres of $\pi$, and the anchor map for $P \rtimes \cl{H}$ is a diffeomorphism onto its image. In this case, $\cl{H}$ acts on the fibres of $\pi$ freely and transitively. A \define{right-principal bibundle} $P$ between Lie groupoids $\cl{G} \rra M$ and $\cl{H} \rra N$ consists of a manifold $P$, two maps $a\colon P \to M$ and $b \colon P \to N$, and
  \begin{itemize}
  \item a right $\cl{H}$-action on $P$ along $b$, for which $a\colon P \to M$ is a principal right $\cl{H}$-bundle,
  \item a left $\cl{G}$-action of $P$ along $a$, for which the $\cl{G}$-orbits are subordinate to the $b$-fibres,
  \end{itemize}
  such that the actions commute (i.e., $(g \cdot p) \cdot h$ makes sense if and only if $g \cdot (p\cdot h)$ does, and in that case they are equal). We will write $P\colon \cl{G} \to \cl{H}$, but a diagram better captures the structure:
  \begin{equation*}
    \begin{tikzcd}
      \cl{G} \circlearrowright & P & \circlearrowleft \cl{H} \\
      M & & N.
      \ar["a"', two heads, from=1-2, to=2-1]
      \ar["b", from=1-2, to=2-3]
    \end{tikzcd}
  \end{equation*}
  
  Given right-principal bibundles $P\colon \cl{G} \to \cl{H}$ and $Q\colon \cl{H} \to \cl{K}$, we may compose them to yield a right-principal bibundle $Q \circ P \colon \cl{G} \to \cl{K}$. However, this composition is not associative. Rather, it is associative up to isomorphism of right-principal bibundles; we say that $P,P' \colon \cl{G} \to \cl{H}$ are \define{isomorphic} if there is a diffeomorphism $\alpha \colon P \to P'$ that commutes with the groupoid and action structure maps. Since composition is only associative up to isomorphism, we have a weak 2-category $\cat{Bi}$ consisting of Lie groupoids, right-principal bibundles, and isomorphisms of right-principal bibundles.  We call isomorphisms in $\cat{Bi}$ \define{Morita equivalences}. Explicitly, a Morita equivalence is a right-principal bibundle $P \colon \cl{G} \to \cl{H}$ such that $b\colon P \to N$ is left $\cl{G}$-principal.

  Differentiable stacks, whatever they are, form a strict 2-category $\cat{St}$. However, there is an equivalence of (weak) 2-categories $\cat{Bi} \to \cat{St}$, which we denote on objects by $\cl{G} \mapsto [M\sslash\cl{G}]$; see \cite{Ler10}. Therefore we may treat a stack as a Lie groupoid up to Morita equivalence. In particular, data on a Lie groupoid that is preserved under Morita equivalence can be viewed as data on the differentiable stack.

  To view orbifolds as stacks, we use Lie groupoids $\cl{G} \rra M$ that are 
  \begin{itemize}
  \item \'{e}tale: $\dim(\cl{G}) = \dim(M)$, and
  \item proper: the anchor map is proper. This requires $\cl{G}$ to be Hausdorff.
  \end{itemize}
  
\begin{definition}
  An \define{orbifold groupoid} is an \'{e}tale and proper Lie groupoid. A \define{stacky orbifold} is a stack presented by an orbifold groupoid. 
\end{definition}

\begin{example}
If $\Gamma$ is a finite group acting smoothly on $\R^n$, then the action groupoid $\R^n\rtimes \Gamma$ is an orbifold groupoid, hence presents the stacky orbifold $[\R^n\sslash\Gamma]$.  
\end{example}
\begin{remark}
  In \cite[Definition 3.24]{KarMiy25}, we defined an ``orbifold groupoid'' to be a Lie groupoid $\cl{G} \rra M$ that is locally isomorphic to action groupoids $\R^n \rtimes \Gamma \rra \R^n$; precisely, for each $x \in M$, there is an open neighbourhood $U$ of $x$, an open subset $V \subseteq \R^n$, a finite group $\Gamma$ acting smoothly on $\R^n$, and an invertible functor $\cl{G}_U \to (\R^n \rtimes \Gamma)_V$. The orbit spaces of such groupoids may be non-Hausdorff.

  By the linearization theorem for proper Lie groupoids \cite{CrainStr13}, every \'{e}tale and proper Lie groupoid is an ``orbifold groupoid'' in the sense of the previous paragraph. The orbit space of a proper Lie groupoid is always Hausdorff.
\end{remark}

There is a functor $F\colon \cat{Bi} \to \cat{Dflg}$ defined by
\begin{itemize}
\item $F(\cl{G}) = M/\cl{G}$;
\item $F(P)$ is the map $M/\cl{G} \to N/\cl{H}$ for which the diagram below commutes:
  \begin{equation*}
    \begin{tikzcd}
      M & P & N \\
      M/\cl{G} & & N/\cl{H}.
      \ar["a"', from=1-2, to=1-1]
      \ar["b", from=1-2, to=1-3]
      \ar[from=1-1, to=2-1]
      \ar[from=1-3, to=2-3]
      \ar["{F(P)}", from=2-1, to=2-3]
    \end{tikzcd}
  \end{equation*}
  \item $F$ is trivial on 2-arrows.
\end{itemize}
This is how we pass from stacks to diffeological spaces. To end this review, we formalize the sense in which $F$ is an equivalence of categories when restricted to orbifolds.

Given any Lie groupoid $\cl{G} \rra M$ we have the pseudogroup of \define{bisections}
\begin{equation*}
  \Bis(\cl{G}) \coloneqq \{\tau \circ \sigma \colon M \todash M \mid \sigma \colon M \todash \cl{G} \text{ is a section of } s, \text{ and } \tau \circ \sigma \text{ is a transition}\}.
\end{equation*}
When $\cl{G}$ is \'{e}tale, the source map is \'{e}tale, so for each $g \in \cl{G}$ there is a unique section of $s$, denoted $\sigma_g$, characterized by $\sigma_g(s(g)) = g$. We call an \'{e}tale Lie groupoid \define{effective} when
\begin{equation*}
  \cl{G} \to \Gamma(\Bis(\cl{G})), \quad g \mapsto \germ{t\circ \sigma_g}{x}
\end{equation*}
is injective. In other words, when $\cl{G}$ is effective, every arrow corresponds to exactly one germ of a bisection.
\begin{proposition}[{\cite[Theorem 6.5]{KarMiy25}}]
  \label{prop:HS-orbifolds-equivalence}
  Let $X$ be a diffeological orbifold, and let $\cl{G}$ and $\cl{H}$ be effective orbifold groupoids.
  \begin{enumeratea}
  \item There is some effective orbifold groupoid $\cl{K}$ such that $F(\cl{K}) \cong X$.
  \item If $f\colon F(\cl{G}) \to F(\cl{H})$ is a diffeomorphism, then there is some Morita equivalence $P\colon \cl{G} \to \cl{H}$ such that $F(P) = f$.
  \item If $P,Q\colon F(\cl{G}) \to F(\cl{H})$ are Morita equivalences, and $F(P) = F(Q)$, then $P$ and $Q$ are isomorphic.
  \end{enumeratea}
\end{proposition}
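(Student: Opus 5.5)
The proposition is quoted from \cite[Theorem 6.5]{KarMiy25}, so the plan is to reconstruct that argument. The route goes through the germ groupoid of a pseudogroup. The guiding fact is that an effective orbifold groupoid $\cl{G} \rra M$ is canonically isomorphic, as a Lie groupoid over $M$, to $\Gamma(\Bis(\cl{G}))$. Indeed, the map $g \mapsto \germ{t\circ\sigma_g}{s(g)}$ is injective by effectiveness. It is surjective because every germ of a bisection is the germ of some $t\circ\sigma_g$. It is a local diffeomorphism because both groupoids are étale and the charts $x' \mapsto \germ{\psi}{x'}$ correspond to the local sections $\sigma_g$.

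For (a), cover $X$ by transitions $\psi_i\colon X \todash \R^{n}/\Gamma_i$ with $\Gamma_i \subseteq \GL(n)$ finite, which we may take to be effective. Set $M = \coprod_i \R^{n}$, with the map $M \to X$ given by composing the quotient maps with $\psi_i^{-1}$, restricted to the preimages of the chart images. Take $\Psi$ to be the pseudogroup of all local diffeomorphisms $\phi\colon M \todash M$ that cover the identity of $X$, and set $\cl{K} = \Gamma(\Psi)$. By construction $M/\cl{K} = X$ as sets. The quotient diffeology agrees with that of $X$ because plots of $X$ locally lift through the charts, and the charts generate the diffeology.

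The key input, and in my view the main obstacle, is a diffeological lifting lemma. It says that a local diffeomorphism $\R^n/\Gamma \todash \R^{n}/\Gamma'$ (in the diffeological sense) locally lifts to a diffeomorphism of $\R^n$, and that any two lifts differ by an element of $\Gamma$. The plan is to use that the diffeology detects the isotropy: the stratification by orbit type is determined by the diffeology, as in \cite{IglKarZad10}. On the principal stratum, which is open and dense, local lifts exist by covering space theory. These extend smoothly across the singular set by a Schwarz-type or Bierstone-type smooth lifting argument applied to the finite group action. Granting the lemma, $\cl{K}$ is proper because the germs over a chart $\R^n$ are governed by the finite group $\Gamma_i$, and it is effective by construction.

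For (b), replace $\cl{G}$ and $\cl{H}$ by $\Gamma(\Bis(\cl{G}))$ and $\Gamma(\Bis(\cl{H}))$. Define the bibundle
\[
P = \{\germ{\phi}{x} \mid \phi\colon M \todash N \text{ a local diffeomorphism covering } f\},
\]
with anchors given by source and target, and actions given by pre- and post-composition of germs. The lifting lemma shows that $a\colon P \to M$ is surjective and that, for $\phi$ fixed at $x$, the fibre of $a$ over $x$ is exactly an $\cl{H}$-torsor. It also shows that $P$ is étale, taking the charts to be germ maps, and that both actions are principal. Applying the same lemma to $f^{-1}$ shows that $b$ is $\cl{G}$-principal, so $P$ is a Morita equivalence, and $F(P) = f$ by construction.

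For (c), let $Q$ be a Morita equivalence with $F(Q) = f$. Each $q \in Q$ gives the germ $b\circ\tau$, where $\tau$ is a local section of $a$ through $q$. This germ covers $f$, so it lies in $P$, and the resulting map $Q \to P$ is equivariant and smooth. By effectiveness of $\cl{H}$ together with principality of $Q$, the map is injective; both are $\cl{H}$-torsors over $M$, so it is bijective. It is therefore an isomorphism of bibundles. Since both $P$ and $Q$ in the statement are isomorphic to this canonical $P_f$, they are isomorphic to each other.
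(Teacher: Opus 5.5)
Your architecture is the right one and matches the source the paper relies on. The paper gives no proof of its own beyond citing \cite[Theorem 6.5]{KarMiy25}, and its Example~\ref{ex:build-orbifold-groupoid} is exactly your $\cl{K}$. Identifying $\cl{G}$ with $\Gamma(\Bis(\cl{G}))$, taking the bibundle of germs of local lifts of $f$, and comparing an arbitrary $Q$ with it are the correct moves, and your (c) is fine once (b) is.

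The gap is in the one step carrying real content, the lifting lemma, where your plan attacks the wrong difficulty. Existence of smooth local lifts is free in diffeology. The map $f\circ\pi$ is a plot of $\R^n/\Gamma'$, so by definition of the quotient diffeology it lifts near any $r$ to a smooth $\tilde f$. Post-composing with an element of $\Gamma'$ then makes $\tilde f(r)$ any prescribed point of the fibre. The stratification/covering-space/Schwarz--Bierstone route is not needed for that, and it does not give what is actually needed:
\begin{itemize}
\item near a singular point the principal stratum may be disconnected, so covering lifts on the pieces need not glue smoothly (on $\R/\Z_2$ the choices $y$ and $-y$ on the two half-lines give $|y|$);
\item a smooth map that is a local diffeomorphism off a nowhere-dense set need not be one there (think of $y\mapsto y^3$).
\end{itemize}
So your sketch never shows that a lift is a local diffeomorphism at singular points, nor that two lifts differ by a group element there. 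Yet both facts are used throughout:
\begin{itemize}
\item for the orbits of $\cl{K}$ to be the fibres of $M\to X$, so $M/\cl{K}=X$ is not ``by construction'';
\item for $\cl{K}$ to be Hausdorff and proper;
\item in (b), for every local diffeomorphism of $N$ covering the identity of $N/\cl{H}$ to be locally a bisection, which is what makes the fibres of $a$ into $\cl{H}$-torsors.
\end{itemize}

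The missing argument is short. Lift $f^{-1}\circ\pi'$ to $\tilde g$ near $\tilde f(r)$ and set $h\coloneqq\tilde g\circ\tilde f$. This is a smooth map on a ball $B$ about $r$ with $h(y)\in\Gamma y$. The finitely many closed sets $C_\gamma\coloneqq\{y\in B: h(y)=\gamma y\}$ cover $B$, so the union of their interiors is dense. By continuity $Dh=\gamma$ on $\overline{C_\gamma^\circ}$. Hence these closures are pairwise disjoint for distinct $\gamma$ (this uses $\Gamma\subseteq\GL(n)$), and they cover $B$. Connectedness of $B$ leaves a single $\gamma$, so $h=\gamma$ on $B$. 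Therefore $D_r\tilde f$ is injective and $\tilde f$ is a local diffeomorphism. The same argument applied to $\tilde f_2\circ\tilde f_1^{-1}$ shows that two lifts differ locally by an element of $\Gamma'$. This is \cite[Corollary 2.15, Lemma 2.16]{KarMiy25}, which the paper uses in Remark~\ref{rem:riemannian-orbifold-generating-families}.

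To apply the lemma on $M$ and $N$ in (b), use that an effective orbifold groupoid restricted to a small neighbourhood $B$ of $x$ is $G_x\ltimes B$, with $G_x$ acting linearly in suitable coordinates. Also take the cover in (a) to be countable, so that $M$ is second countable.
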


\begin{remark}
  Item (a) was shown in \cite{IglLaf18} for orbifolds, and \cite{IglPrat21} for quasifolds, respectively. Items (a) and (b) may also be derived from \cite[Theorem 5.32]{MoerMrc03}.
\end{remark}

\begin{remark}
  Proposition \ref{prop:HS-orbifolds-equivalence} has several generalizations. The full result of \cite{KarMiy25} allows for $X$ to be a diffeological quasifold, for $\cl{G}$ and $\cl{H}$ to be effective quasifold groupoids, for $f$ to be a local diffeomorphism, and for $P,Q$ to be locally-invertible bibundles. In \cite{Miy24b}, we further enlarged the relevant objects, and allowed for $f$ to be a local subduction, and for $P,Q$ to be submersive bibundles. Defining the objects handled by \cite{Miy24b} will take us too far afield, but we mention that they include both quasifolds and leaf spaces of regular Riemannian foliations.
\end{remark}

\begin{example}
  \label{ex:build-orbifold-groupoid}
  It will be instructive to illustrate (a). Take a diffeological $n$-orbifold $X$, and fix some countable collection $\{\psi_i\colon X \todash \R^n/\Gamma_i\}$ of transitions whose domains cover $X$. Let $\pi_i\colon \R^n \to \R^n/\Gamma_i$ denote the quotient map, and set $V_i \coloneqq \pi_i^{-1}(\text{image}(\psi_i))$. Take
  \begin{equation*}
    \Psi \coloneqq \{\text{transitions } f\colon V_i \todash V_j \mid \pi_j \circ f = \psi_j \circ \psi_i^{-1} \circ \pi_i\}.
  \end{equation*}
The collection $\Psi$ generates a pseudogroup, also denoted $\Psi$, on the coproduct $\coprod_i V_i$. The germ groupoid $\Gamma(\Psi) \rra \coprod_i V_i$ is an orbifold groupoid, and $\left(\coprod_i V_i\right) / \Gamma(\Psi) \cong X$. We will return to this example throughout the article.
\end{example}
\section{Riemannian metrics}
\label{sec:riemannian-metrics}

\subsection{Diffeological Riemannian metrics}
\label{sec:diff-riem-metr}

The notion of Riemannian metric introduced in \cite{KurSakShiob25} uses the Kan extension of the fibred tangent functor. For a more thorough treatment of the categorical properties of $\cat{Dflg}$, see \cite[Chapter 2]{Bloh24b}.

Recall the embedding $y\colon \cat{Eucl} \to \cat{Dflg}$. Because $\cat{Eucl}$ is a small category and $\cat{Dflg}$ is cocomplete, for any functor $\hat{F} \colon \cat{Eucl} \to \cat{Eucl}$, we can take the left Kan extension of $y \circ \hat{F}$ along $y$, to get a functor
\begin{equation*}
  F =\bb{L}\hat{F} \coloneqq \operatorname{Lan}_y (y \circ \hat{F}) \colon \cat{Dflg} \to \cat{Dflg}.
\end{equation*}
The functor $y$ is full and faithful, so we get the commutative square
\begin{equation*}
  \begin{tikzcd}
    \cat{Eucl} & \cat{Dflg} \\
    \cat{Eucl} & \cat{Dflg}.
    \ar["\hat{F}", from=1-1, to=2-1]
    \ar["y", from=1-1, to=1-2]
    \ar["F", from=1-2, to=2-2]
    \ar["y", from=2-1, to=2-2]
  \end{tikzcd}
\end{equation*}

Every functor $\hat{F}$ that we Kan extend will be a \define{cosheaf}, meaning that whenever $\{U_\alpha\}$ is a (countable) open cover of $U$, the diagram
\begin{equation*}
  \begin{tikzcd}
    \coprod_{\alpha, \beta} \hat{F} (U_\alpha \cap U_\beta) & \coprod_\alpha \hat{F}U_\alpha & \hat{F}U.
    \ar[from=1-1, to=1-2, shift left]
    \ar[from=1-1, to=1-2, shift right]
    \ar[from=1-2, to=1-3]
  \end{tikzcd}
\end{equation*}
is a coequalizer in $\cat{Eucl}$. When $\hat{F}$ is a cosheaf, $F$ preserves coproducts \cite[Proposition 2.2.14]{Bloh24b}. This means we may compute $FX$ as follows. Let $\cl{N} \coloneqq \coprod_{p \colon U \to X} yU_p$ be the \define{nebula} of $X$. Using the fact $F$ preserves coproducts and $F\circ y = y\circ \hat{F}$, we have $F\cl{N} = \coprod_{p \colon U \to X} (y \circ \hat{F})U_p$. Then $FX$ is the quotient of $F\cl{N}$ by the equivalence relation generated by
  \begin{equation*}
    (v \in (y \circ \hat{F})U_p) \sim ((y\circ \hat{F})f(v) \in (y\circ \hat{F})U_q) \text{ whenever } q\circ f = p.
  \end{equation*}
  It follows from this construction that
  \begin{equation*}
    FX = \{Fp(v) \mid p \colon yU \to X \text{ is a plot, and} \ v \in (y\circ \hat{F})U\},
  \end{equation*}
  and that the set $\{Fp\colon yU \to X \mid p \in \cl{D}_U\}$ generates the diffeology of $FX$.
  
  We can now define the relevant tangent functor and fibred tangent functor.
  
  \begin{definition}[{\cite{Bloh24}}]
Let $\hat{T} \colon \cat{Eucl} \to \cat{Eucl}$ denote the usual tangent functor given by
\begin{equation*}
  \hat{T}(U \subseteq \R^n) \coloneqq U \times \R^n, \quad \hat{T}f(x,v) \coloneqq (f(x), D_xf(v)).
\end{equation*}
We define $T \coloneqq \bb{L}\hat{T}$.
\end{definition}

\begin{definition}
Let $\hat{T}_2 \colon \cat{Eucl} \to \cat{Eucl}$ denote the functor given by 
\begin{equation*}
\hat{T}_2(U \subseteq \R^n) \coloneqq U\times \R^n \times \R^n, \quad \hat{T}_2f(x,v,w) \coloneqq (f(x), D_xf(v), D_xf(w)).
\end{equation*}
We define $T_2 \coloneqq \bb{L}\hat{T}_2$.
  \end{definition}

  Despite the notation, $T_2$ is not the fibred product $T \fiber{}{1} T$: there is a natural transformation $\theta_2\colon T_2 \to T \fiber{}{1} T$, defined by
  \begin{equation*}
    \theta_{2,X}(T_2p(x,v,w)) \coloneqq (Tp(x,v), Tp(x,w)),
  \end{equation*}
  and this may not be an isomorphism. Crucially, $\theta_{2,X}$ need not be an isomorphism when $X$ is an orbifold.
  \begin{example}
    Let $X \coloneqq \R/\Z_2$, where the $\Z_2$ acts on $\R$ is by reflection. We computed in \cite[Example 3.9]{Miy25} that
    \begin{equation*}
      TX \cong (\R \times \R) / ((x,v) \sim (-x,-v)),
    \end{equation*}
    and by a similar computation
    \begin{align*}
      T_2X &= (\R \times \R \times \R) / ((x,v,w) \sim (-x,-v,-w)).
    \end{align*}
    The natural map $\theta_{2,X}$, which sends $[x,v,w])$ to $([x,v],[x,w])$, is therefore not an isomorphism. For instance, it maps $[0,1,1]$ and $[0,-1,1]$ to the same pair.
  \end{example}

  For Riemannian metrics, \cite{KurSakShiob25} use $T_2$ rather than $T\fiber{}{1} T$.

  \begin{definition}[{\cite[Definition 3.1]{KurSakShiob25}}]
    \label{def:weak-riemannian-metric}
    A \define{weak Riemannian metric} on $X$ is a map $g \colon T_2X \to \R$ such that, for each plot $p\colon U \to X$, the pullback
    \begin{equation*}
      p^*g \coloneqq g \circ T_2p\colon T_2U \to \R
    \end{equation*}
    is a symmetric and positive covariant 2-tensor on $U$.
  \end{definition}
Since the plots $T_2p$ generate the diffeology of $T_2X$, a weak Riemannian metric is always smooth. 

\begin{remark}
  \label{rem:other-definitions-of-metric}
There are multiple proposals for Riemannian metrics in diffeology. Goldammer and Welker \cite[Definition 4.13]{GolWel21} use their own tangent functor $T^{\text{GW}}$, and view a Riemannian metric as a map $g\colon T^{\text{GW}}X \fiber{}{X} T^{\text{GW}}X \to \R$ that is symmetric and positive definite on the fibres. It is not clear whether $T^{\text{GW}} \fiber{}{1} T^{\text{GW}}$ coincides with $T_2$. A comparison of Definition \ref{def:weak-riemannian-metric} and \cite[Definition 4.13]{GolWel21} would be welcome, but we do not pursue it here.

    Iglesias-Zemmour \cite[Page 3]{Igl23} gives a notion of a symmetric and positive covariant 2-tensor on a diffeological space $X$, where positivity is defined in terms of smooth curves in $X$. It is proved in \cite[Proposition 3.5]{KurSakShiob25} that these tensors are in bijection with the weak Riemannian metrics on $X$.
  \end{remark}

  The non-degeneracy of a weak Riemannian metric is delicate to state. 
  
\begin{definition}[{\cite[Definition 3.7]{KurSakShiob25}}]
    We call a weak Riemannian metric $g$ on $X$ \define{definite} if there exists some generating family $\cl{A}$ for the diffeology on $X$ such that $p^*g$ is a Riemannian metric (symmetric, positive, and definite covariant 2-tensor) on $\dom(p)$ for each plot $p \in \cl{A}$. 
  \end{definition}
  A weak Riemannian metric on $X$ is either definite or not. However, when it is definite, it is often useful to remember the generating family $\cl{A}$ witnessing its definiteness. If $\cl{A}$ consists of a single map $\pi$, we will say that $g$ is definite with respect to $\pi$.
  
  \begin{example}
    It is straightforward to show that a weak Riemannian metric that is definite with respect to some atlas $\cl{A}$ of a smooth manifold $M$ is simply a Riemannian metric in the usual sense.
  \end{example}

  \begin{remark}
    \label{rem:other-definitions-of-definite}
    Iglesias-Zemmour proposes a definition of ``definite'' in \cite[Page 3]{Igl23}. In \cite[Proposition 3.10]{KurSakShiob25}, the authors show that the present notion of definiteness implies Iglesias-Zemmour's version.
  \end{remark}

  \begin{example}
    Let $M$ be a convenient manifold, as defined in \cite{KriegMic97}. The notion of a ``weak Riemannian metric'' on $M$ already exists in the theory of infinite-dimensional manifolds, for instance see \cite[Definition 4.1]{Sch23}. By \cite[Proposition 3.12]{KurSakShiob25}, the weak Riemannian metrics on $M$ in the sense of \cite[Definition 4.1]{Sch23} are in bijection with the weak Riemannian metrics on $M$ (when viewed as a diffeological space, cf., Example \ref{ex:E-spaces}) that are definite with respect to the atlas of $M$. This motivates the terminology.
  \end{example}

  Beyond finite-dimensional and convenient manifolds, the authors of \cite{KurSakShiob25} turn to mapping spaces and adjunctions of the same for examples of Riemannian diffeological spaces. Our goal is to equip quotients of finite-dimensional manifolds with weak Riemannian metrics.

  \begin{definition}
    A \define{Riemannian orbifold} is a diffeological orbifold $X$ equipped with a weak Riemannian metric $g$ that is definite with respect to the generating family
    \begin{equation*}
      \cl{A} \coloneqq \{\psi^{-1} \circ \pi \colon \pi^{-1}(\text{image}(\psi)) \to X \mid \psi\colon X \todash \R^n/\Gamma  \text{ is an orbifold chart}\}.
    \end{equation*}
  \end{definition}
  \begin{remark}
    \label{rem:riemannian-orbifold-generating-families}
    Let $X$ be a diffeological $n$-orbifold, and take two charts $\psi\colon X \todash \R^n/\Gamma$ and $\psi'\colon X \todash \R^n/\Gamma'$. Let $V \coloneqq \pi^{-1}(\text{image}(\psi))$, and similarly define $V'$. Also set $\varphi \coloneqq \psi^{-1} \circ \pi|_V$, and similarly define $\varphi'$. Let $g$ be a weak Riemannian metric on $X$.

    Given any $r \in V$ and $r' \in V'$ with $\varphi(r) = \varphi(r')$, there is some smooth map $f\colon V \todash V'$ such that $f(r) = r'$ and $\varphi' \circ f = \varphi$. Moreover, any such map is necessarily a local diffeomorphism. These are fundamental facts about orbifolds, see \cite[Corollary 2.15, Lemma 2.16]{KarMiy25}. The equality
    \begin{equation*}
      (\varphi^*g)_r = ((\varphi')^*g)_{r'} \circ T_2f
    \end{equation*}
    then implies that $(\varphi^*g)_r$ is definite if and only if $((\varphi')^*g)_{r'}$ is definite. Therefore if $g$ is definite with respect to any particular generating family consisting of orbifold charts, then $g$ is definite with respect to $\cl{A}$. 
  \end{remark}

  \begin{example}
    \label{ex:reflection-orbifold}
    Let $X \coloneqq \R/\Z_2$, where $\Z_2$ acts on $\R$ by reflection. Denote the quotient $\R \to \R/\Z_2$ by $\pi$, and let $\eta$ denote the standard metric on $\R$. We can try to define a smooth map $\ol{\eta}\colon T_2(\R/\Z_2) \to \R$ by
    \begin{equation*}
      \ol{\eta}(T_2\pi(v,w)) \coloneqq \eta(v,w).
    \end{equation*}
    However, it is not clear that this is well-defined; it is our main result that $\ol{\eta}$ is indeed well-defined, and $(X, \ol{\eta})$ is a Riemannian orbifold.
  \end{example}

  As Example \ref{ex:reflection-orbifold} shows, finding weak Riemannian metrics on a given diffeological space is more difficult than defining a Riemannian metric on a manifold. The converse is easier.

  \begin{proposition}
    \label{prop:metric-on-orbifold-lifts}
    Suppose that $\cl{G} \rra M$ is an \'{e}tale Lie groupoid, and that $g$ is a weak Riemannian metric on $M/\cl{G}$ that is definite with respect to $\pi\colon M \to M/\cl{G}$. Then the pullback $\pi^*g$ is a Riemannian metric on $M$ that is invariant under $\Bis(\cl{G})$. If $\cl{G}$ is proper, then $(M/\cl{G}, g)$ is a Riemannian diffeological orbifold.
  \end{proposition}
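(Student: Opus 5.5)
Since $\pi\colon M \to M/\cl{G}$ is a plot of the quotient diffeology, the definition of weak Riemannian metric makes $\pi^*g = g\circ T_2\pi\colon T_2M \to \R$ a symmetric positive covariant 2-tensor on $M$. Strictly, plots have Euclidean domains, so I would restrict to charts $\varphi\colon U \to M$ and use that $T_2$ agrees with $\hat{T}_2$ on manifolds, so these local tensors glue to a smooth section of $\mathrm{Sym}^2T^*M$. Definiteness with respect to $\pi$ means exactly that $\pi^*g$ is positive definite, so $\pi^*g$ is a Riemannian metric on $M$.

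For invariance, take $\psi = t\circ\sigma \in \Bis(\cl{G})$ with domain $U\subseteq M$. For $x\in U$ the arrow $\sigma(x)$ runs from $x$ to $\psi(x)$, so $\pi\circ\psi = \pi|_U$. By functoriality of $T_2$ on $\cat{Dflg}$ (and its compatibility with restriction to D-open subsets, which holds because the Kan extension is computed plotwise),
\begin{equation*}
  \psi^*(\pi^*g) = g\circ T_2\pi\circ T_2\psi = g\circ T_2(\pi\circ\psi) = g \circ T_2(\pi|_U) = (\pi^*g)|_U.
\end{equation*}
Hence $\psi$ is an isometry of $\pi^*g$. This part uses nothing beyond functoriality, and does not even need étaleness.

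For the last claim, I assume $\cl{G}$ is proper as well as étale, so it is an orbifold groupoid. Its orbit space is Hausdorff, second-countable, and locally of the form $\R^n/\Gamma$ by the linearization theorem cited after the definition of orbifold groupoid. So $M/\cl{G}$ is a diffeological orbifold, and I must show $g$ is definite with respect to the orbifold-chart family $\cl{A}$. By Remark \ref{rem:riemannian-orbifold-generating-families}, it suffices to exhibit one generating family of orbifold charts on which $g$ pulls back definitely. Near each $x\in M$, linearization gives a neighbourhood $V\cong\R^n$ (or an open subset) with a finite group $\Gamma$ such that $\pi|_V$ factors as a diffeomorphism onto an open subset of $\R^n/\Gamma$ composed with the quotient map. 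Under this identification, the chart plot $\psi^{-1}\circ\pi_\Gamma$ is just $\pi|_V$, whose pullback $(\pi^*g)|_V$ is definite. These $\pi|_V$ cover $M/\cl{G}$ and are local lifts of $\pi$, so they generate the diffeology; by the remark, $g$ is then definite on all of $\cl{A}$.

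The main obstacle is this chart identification: checking that the local model from linearization really gives a transition $M/\cl{G} \todash \R^n/\Gamma$ whose associated plot coincides with $\pi|_V$ up to a diffeomorphism of $V$. I would handle it using the invertible functor $\cl{G}_U \to (\R^n\rtimes\Gamma)_V$ from that remark, together with the fact that $\pi(U)$ is D-open because $\cl{G}$ is étale and saturations of open sets are open.
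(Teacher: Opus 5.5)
Your proposal is correct and follows the paper's proof: definiteness with respect to $\pi$ gives the Riemannian metric, invariance follows from $\pi\circ t\circ\sigma = \pi$ and functoriality of $T_2$, and in the proper case $g$ is definite with respect to the orbifold atlas by Remark \ref{rem:riemannian-orbifold-generating-families}. Your extra step uses the local model $\cl{G}_U \cong (\R^n\rtimes\Gamma)_V$ to identify $\pi|_U$ with an orbifold chart plot, which supplies a detail the paper leaves implicit when it applies that remark to $\pi$, since $\pi$ is not itself a chart plot.
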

  \begin{proof}
By definition of definiteness, $\pi^*g$ is a Riemannian metric on $M$. For invariance, suppose that $t \circ \sigma \in \Bis(\cl{G})$. Then
    \begin{equation*}
      (t\circ \sigma)^* \pi^*g = (\pi \circ t \circ \sigma)^*g = \pi^*g.
    \end{equation*}
    When $\cl{G}$ is proper, it is an orbifold groupoid, and the definiteness of $\cl{G}$ with respect to $\pi$ implies definiteness with respect to the orbifold atlas $\cl{A}$ by Remark \ref{rem:riemannian-orbifold-generating-families}.
  \end{proof}

  In the next section, we will be able to recognize $\pi^*g$ as a 0-metric on $\cl{G}$, which is equivalent to the data of a Riemannian metric on the stack presented by $\cl{G}$.

\subsection{Riemannian metrics on stacks}
\label{sec:riem-metr-stacks}
  
We now review the definition of a Riemannian metric on a groupoid or stack as presented in \cite{HoyFer18,HoyLoj19}. We begin with the notion of a transverse metric. Let $p\colon E \to B$ be a surjective submersion, take $e \in E$, set $b \coloneqq p(e)$, and set $F \coloneqq p^{-1}(b)$. We have $\ker(T_ep) \cong T_eF$, so given a Riemannian metric $\eta$ on $E$, the map $T_ep$ induces an isomorphism $T_ep^\perp\colon T_eF^\perp \to T_bB$, where $\bullet^\perp$ denotes the orthogonal complement. We call $\eta$ \define{transverse with respect to} $p$, or simply $p$-transverse, if for every fibre $F$ of $p$ and every $e,e' \in F$ the map
\begin{equation*}
(T_{e'}p^\perp)^{-1} \circ T_ep^\perp \colon T_eF^\perp \to T_{e'}F^\perp  
\end{equation*}
  is an isomorphism. A $p$-transverse metric $\eta$ induces a push forward metric $\eta_B \coloneqq p_*\eta$, and the map $T_ep^\perp$ becomes an isometry when $B$ is equipped with this metric.

  For a Lie groupoid $\cl{G} \rra M$, let $\cl{G}^{(2)} \coloneqq \cl{G} \fiber{s}{t} \cl{G}$ denote the set of pairs of composeable arrows. This is an embedded submanifold of $\cl{G}^2$. By viewing elements of $\cl{G}^{(2)}$ as commutative triangles, we find a natural action by the symmetric group $S_3$ on $\cl{G}^{(2)}$.

  \begin{definition}[{\cite[Definition 3.1.1]{HoyLoj19}}]
    A $2$-metric on a Lie groupoid $\cl{G}$ is a Riemannian metric $\eta^2$ on $\cl{G}^{(2)}$ that is transverse with respect to $\pr_1\colon \cl{G}^{(2)} \to \cl{G}$ and is invariant under the $S_3$-action.
  \end{definition}

A 2-metric is also transverse with respect to the other face maps, $m$ and $\pr_2$. A 2-metric descends to a 1-metric on $\cl{G}$ and 0-metric on $M$; we need the latter. Fix a Lie groupoid $\cl{G} \rra M$ with underlying singular foliation $\cl{F}$.  The normal bundle $N\cl{F} \coloneqq TM / T\cl{F}$ is always a diffeological bundle of vector spaces, and is a smooth vector bundle when $\cl{G}$ is regular. For $v \in TM$, we denote its image in $N\cl{F}$ by $[v]$. The Lie groupoid $\cl{G}$ acts on $N\cl{F}$ along the anchor $\pi\colon N\cl{F} \to M$, by
    \begin{equation*}
    g \cdot [v] \coloneqq [Tt(\xi)], \text{ where } \xi \in T_g\cl{G} \text{ is chosen such that } [Ts(\xi)] = [v].
  \end{equation*}
  \begin{remark}
    To see that this action is well-defined and smooth, we can use the auxiliary data of a connection on $\cl{G}$. This is vector bundle morphism $\Omega \colon s^*TM \to TG$, such that $(\pi \times Ts) \circ \Omega = 1_{s^*TM}$ and $\Omega|_M = Tu$. Every Lie groupoid admits a connection. The action of $\cl{G}$ on $N\cl{F}$ is defined by the diagram
    \begin{equation*}
      \begin{tikzcd}
        \cl{G} \fiber{s}{\pi} TM & TM \\
        \cl{G} \fiber{s}{\pi} N\cl{F} & N\cl{F},
        \ar["{Tt \circ \Omega}", from=1-1, to=1-2]
        \ar[two heads, from=1-1, to=2-1]
        \ar[two heads, from=1-2, to=2-2]
        \ar["\text{act}", from=2-1, to=2-2]
      \end{tikzcd}
    \end{equation*}
    because if $v \in T\cl{F}$, then writing $v = Ts(\Omega(g,v))$, we see that $Tt(\Omega(g,v))$ is also in $T\cl{F}$. Since the downwards arrows are subductions and the top arrow is smooth, the action is smooth. One may verify that the action satisfies the requisite algebraic axioms directly.
  \end{remark}

  A Riemannian metric $\eta$ on $M$ induces, for each $x \in M$, a splitting $T_xM = T_x\cl{F} \oplus T_x\cl{F}^\perp$. This gives the inclusion map $\iota\colon T\cl{F}^\perp \to TM$, and we have a smooth bijection $I$ defined by:
  \begin{equation}
    \label{eq:definition-of-I}
    \begin{tikzcd}
      & TM & \\
      T\cl{F}^\perp & & N\cl{F},
      \ar["\pi", two heads, from=1-2, to=2-3]
      \ar["\iota", hook, from=2-1, to=1-2]
      \ar["I", "\text{smooth bij.}"', from=2-1, to=2-3]
    \end{tikzcd}
  \end{equation}
  where $T\cl{F}^\perp$ carries its sub-diffeology, and $N\cl{F}$ carries its quotient diffeology. Both $T\cl{F}^\perp$ and $N\cl{F}$ are diffeological bundles of vector spaces and $I$ is a bundle morphism, but it is not generally an isomorphism. This contrasts with the case of smooth vector bundles.  

  \begin{proposition}
    \label{prop:when-I-is-a-diffeomorphism}
    Let $\cl{G}$ be a Lie groupoid over a connected base. The map $I\colon T\cl{F}^\perp \to N\cl{F}$ defined in \eqref{eq:definition-of-I} is a diffeomorphism if and only if $\cl{G}$ is regular.
  \end{proposition}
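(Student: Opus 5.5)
We prove the two directions separately; the converse direction is where the real work lies.

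\emph{Regular implies diffeomorphism.} Suppose $\cl{G}$ is regular. By Lemma \ref{lem:regular-definition}, $\cl{F}$ is a regular foliation, so $T\cl{F}$ is a smooth subbundle of $TM$ of constant rank. Its orthogonal complement $T\cl{F}^\perp$ is then a smooth subbundle, embedded in $TM$, and $N\cl{F} = TM/T\cl{F}$ is a smooth vector bundle. The quotient map $TM \to N\cl{F}$ is a surjective submersion, hence a subduction, so the quotient diffeology on $N\cl{F}$ agrees with its manifold diffeology. The restriction $I$ of this map to $T\cl{F}^\perp$ is a smooth vector bundle isomorphism, fibrewise a linear bijection. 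Its inverse is smooth: in a local frame adapted to $T\cl{F} \oplus T\cl{F}^\perp$, the inverse is orthogonal projection followed by the identification. Since $T\cl{F}^\perp$ is an embedded submanifold, its sub-diffeology is its manifold diffeology. Hence $I$ is a diffeomorphism of diffeological spaces.

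\emph{Diffeomorphism implies regular.} Suppose $\cl{G}$ is not regular. Since $M$ is connected, the function $x \mapsto \dim O_x$ is not constant. It is lower semicontinuous, because $T\cl{F}$ is spanned by smooth sections. So there is a point $x_0$ with $\dim T_{x_0}\cl{F} = k$, together with points $x_n \to x_0$ where the dimension is strictly larger. Choose such $x_0$ on the boundary of the set where the dimension is minimal near it; there is then a sequence $x_n \to x_0$ and vectors $v_n \in T_{x_n}\cl{F}$ of unit length with $v_n \to v_0 \notin T_{x_0}\cl{F}$.

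To obtain this, pick a smooth curve $c$ through $x_0$ along which some smooth section $X$ of $T\cl{F}$ has $X(c(t)) \notin$ the limiting directions, or more simply argue as follows. The subspaces $T_{x_n}\cl{F}$, of dimension at least $k+1$, accumulate, after passing to a subsequence in the Grassmannian, onto a subspace of dimension at least $k+1$ that contains $T_{x_0}\cl{F}$. It therefore contains a unit vector $v_0 \perp T_{x_0}\cl{F}$.

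The goal is to show that $I^{-1}$ fails to be smooth. Consider $w_n \coloneqq \operatorname{proj}^\perp_{x_n}(v_0)$, the component of $v_0$ (transported in a chart) orthogonal to $T_{x_n}\cl{F}$, which is near zero. The key point is that a diffeomorphism $I$ would give $T\cl{F}^\perp \cong N\cl{F}$ as diffeological spaces, and hence identical D-topologies.

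\begin{itemize}
\item In $N\cl{F}$, the image of the constant-vector path $x \mapsto [v_0]$ is continuous, being the image of a plot of $TM$.
\item Its preimage under $I$ is $x \mapsto$ the orthogonal projection of $v_0$ onto $T_x\cl{F}^\perp$.
\end{itemize}

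This projection equals $v_0$ at $x_0$, but along $x_n$ its norm stays bounded away from $\|v_0\|$, because $v_0$ is close to $T_{x_n}\cl{F}$. So the preimage is discontinuous in the D-topology of $T\cl{F}^\perp$, which is the subspace topology of $TM$. To make this a plot statement, take a smooth curve $\gamma$ in $M$ through $x_0$ passing through (a subsequence of) the $x_n$. Such a curve exists by the standard smooth-curve-through-a-convergent-sequence lemma, after choosing $x_n$ converging fast. Then $t \mapsto [v_0]_{\gamma(t)}$ is a plot of $N\cl{F}$, while $I^{-1}$ of it is not even continuous into $TM$, hence is not a plot of $T\cl{F}^\perp$.

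The main obstacle is producing the vector $v_0 \perp T_{x_0}\cl{F}$ in the limit of the higher-dimensional leaf tangents, and bounding the projection uniformly. The Grassmannian compactness argument above handles this. Parallel transport in a chart makes "$v_0$ at $x_n$" meaningful.
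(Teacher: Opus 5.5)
Your argument is correct, but in the hard direction it takes a different route from the paper. The paper argues directly. If $I^{-1}$ is smooth, then $I^{-1}\circ\pi\colon TM\to TM$ is a smooth bundle endomorphism (the fibrewise orthogonal projection onto $T\cl{F}^\perp$) with image $T\cl{F}^\perp$. So $x\mapsto\dim T_x\cl{F}^\perp$ is lower semicontinuous, being the rank of such a map. It is also upper semicontinuous, being $\dim M-\dim T_x\cl{F}$. Hence it is locally constant, thus constant on the connected base, and Lemma \ref{lem:regular-definition} concludes.

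You instead argue by contraposition. You evaluate that same projection on a constant vector field $V\equiv v_0$ in a chart and show by hand that it jumps at a point where the leaf dimension jumps. Your Grassmannian step is in effect a reproof of the lower semicontinuity of rank in this particular situation; it mirrors the argument the paper uses later in Proposition \ref{prop:regularity-is-necessary}. The paper's version buys brevity: it never locates a jump point or extracts a limiting plane. Yours buys an explicit witness, a concrete plot $x\mapsto[V(x)]$ of $N\cl{F}$ with no continuous lift to $T\cl{F}^\perp$.

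Some of your steps can be trimmed or tightened:
\begin{enumerate}[label=(\roman*)]
\item The smooth curve through the $x_n$ is unnecessary. The map $x\mapsto[V(x)]$ on the chart domain is already a plot, and its image under $I^{-1}$ must then be continuous into $TM$.
\item You do not need $E\supseteq T_{x_0}\cl{F}$. The inequality $\dim E>\dim T_{x_0}\cl{F}$ alone forces $E\cap(T_{x_0}\cl{F})^\perp\neq 0$. Before taking the Grassmannian limit, first pass to a subsequence of constant dimension.
\item You should not assert that the D-topology of $T\cl{F}^\perp$ equals its subspace topology in $TM$. It is not needed, since it suffices that plots of $T\cl{F}^\perp$ are continuous into $TM$.
\item The jump point is most cleanly obtained as follows. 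For some $j$, the open set $\{x : \dim T_x\cl{F}\geq j\}$ is nonempty and proper. By connectedness it is therefore not closed, so it has a limit point $x_0$ outside it.
\end{enumerate}
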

  \begin{remark}
    Proposition \ref{prop:when-I-is-a-diffeomorphism} remains valid for an arbitrary singular foliation $\cl{F}$.
  \end{remark}
  \begin{proof}
   The converse is a standard fact, so assume that $I$ is a diffeomorphism. Let $\dim_{\cl{F}^\perp}$ denote the dimension map $x \mapsto \dim T_x\cl{F}^\perp$. Since $I^{-1}$ is smooth, the composition $I^{-1} \circ \pi$, viewed as a map $TM \to TM$, is a smooth bundle endomorphism of $TM$. The bundle $T\cl{F}^\perp$ arises as the image of this bundle endomorphism, and therefore the dimension map $\dim_{\cl{F}^\perp}$ is lower-semicontinuous. On the other hand, since $TM = T\cl{F} \oplus T\cl{F}^\perp$, the dimension map satisfies $\dim_{\cl{F}^\perp} = \dim(M) - \dim_{\cl{F}}$, so it is also upper-semicontinuous. Therefore $\dim_{\cl{F}^\perp}$ is continuous, and by Lemma \ref{lem:regular-definition}, $\cl{G}$ is regular.

\end{proof}

We may now introduce 0-metrics. Let $N_2\cl{F} \coloneqq N\cl{F} \fiber{}{M} N\cl{F}$, and similarly define $T_2\cl{F}^\perp$. Any metric $\eta$ on $M$ restricts to a smooth bundle-metric on $T\cl{F}^\perp$, denoted $\eta^\perp \colon T_2\cl{F}^\perp \to \R$.
  
  \begin{definition}
    A $0$-\define{metric} on a Lie groupoid $\cl{G} \rra M$ is a Riemannian metric $\eta$ on $M$ such that the (not necessarily smooth) map
    \begin{equation*}
      \eta^N \coloneqq \eta^\perp \circ I^{-1} \colon N\cl{F} \fiber{}{M} N\cl{F} \to \R,
    \end{equation*}
    is invariant under the $\cl{G}$ action on $N_2\cl{F}$.
  \end{definition}
  The $\cl{G}$ action on $N_2\cl{F}$ is given by $g \cdot ([v], [w]) = (g\cdot [v], g\cdot [w])$.

  \begin{remark}
    This is equivalent to the definition of 0-metric in \cite[Definition 3.1]{HoyFer18}, which was earlier proposed in \cite[Definition 3.12]{PflPosTan14}. Both those definitions take an ``orbit-by-orbit'' approach, since for each orbit $O$ the normal bundle $NO$ is a smooth vector bundle, whereas we use the entire (possibly singular) normal bundle $N\cl{F}$.
  \end{remark}
    \begin{remark}
    We notice here tension between $T\cl{F}^\perp$ and $N\cl{F}$: a Riemannian metric $\eta$ naturally induces a smooth map $\eta^\perp$ on $T_2\cl{F}^\perp$, but it is $N_2\cl{F}$ on which $\cl{G}$ acts smoothly. The result is that we must deal with the fact that $\eta^N$ need not be smooth.
  \end{remark}

A 2-metric $\eta^2$ on $\cl{G}$ induces a 0-metric by $\eta \coloneqq s_* (\pr_1)_* \eta^2$ (one can show that $(\pr_1)_*\eta^2$ is a 1-metric, and in particular $(\pr_1)_*\eta^2$ is $s$-transverse). Conversely, there may not exist any 2-metric that induces a given 0-metric. For \'{e}tale Lie groupoids, the situation is simpler: since the source map $s\colon \cl{G} \to M$ and the projection $\pr_1\colon \cl{G}^{(2)} \to \cl{G}$ are both \'{e}tale, we can build a 2-metric from a 0-metric by declaring $\eta^2 \coloneqq (\pr_1)^*s^*\eta$. This 2-metric induces $\eta$. See \cite[Section 3]{HoyFer18} for more details.
\begin{example}
  \label{ex:0-metric-on-etale-groupoid}
    If $\cl{G} \rra M$ is \'{e}tale, then the induced foliation $\cl{F}$ is the foliation of $M$ by points, so $N\cl{F} = TM$, and the action of $\cl{G}$ on $TM$ is
    \begin{equation*}
      g \cdot v \coloneqq T(t \circ \sigma_g)(v) \text{ where } \sigma_g \text{ is the section of }s \text{ with } \sigma_g(s(g)) = g.
    \end{equation*}
    It immediately follows that $\eta$ is a 0-metric on an \'{e}tale groupoid $\cl{G} \rra M$ if and only if $\eta = \eta^N$ is invariant under the action of the pseudogroup of bisections $\Bis(\cl{G})$.

    In particular, if $\cl{G} \rra M$ is the Lie groupoid built from a diffeological orbifold, as in Example \ref{ex:build-orbifold-groupoid}, then a $0$-metric on $M = \coprod_i V_i$ is a choice of Riemannian metric $\eta_i$ on each $V_i$, such that the collection $\{\eta_i\}$ is invariant under $\Bis(\cl{G}) = \{f \colon V_i \todash V_j \mid \pi_j \circ f = \psi_j \circ \psi_i^{-1} \circ \pi_i\}$. This is exactly the usual notion of Riemannian metric on a classical orbifold, e.g., \cite{Sat57} and \cite[Section 4.1]{Car22}.
 \end{example}

For stacks, we require a notion of metric that is invariant under Morita equivalence.

 \begin{definition}[{\cite[Section 6]{HoyLoj19}}]
   Two 2-metrics $\eta^2_1$ and $\eta^2_2$ are \define{equivalent} if, denoting their induced $0$-metrics $\eta_1$ and $\eta_2$, we have $\eta_1^N = \eta_2^N$.
 \end{definition}

 We justify this notion of equivalence. Suppose that $P \colon \cl{G} \to \cl{H}$ is a Morita equivalence. The $\cl{G}$ and $\cl{H}$ actions combine into a left action of the product Lie groupoid $\cl{G} \times \cl{H}$ on $P$ along $(a,b)\colon P \to M \times M$, given by $(g,h)\cdot p \coloneqq g\cdot p \cdot h^{-1}$. This yields the action Lie groupoid $(\cl{G} \times \cl{H}) \ltimes P$, and we have functors
 \begin{equation*}
   \begin{tikzcd}
     \cl{G} & (\cl{G} \times \cl{H}) \ltimes P & \cl{H},
     \ar["\phi"', from=1-2, to=1-1]
     \ar["\psi", from=1-2, to=1-3]
   \end{tikzcd}
 \end{equation*}
 where $\phi(g,h,p) = g$ and $\psi(g,h,p) = h^{-1}$. On objects, $\phi_0 = a$ and $\psi_0 = b$. For brevity, we denote $\cl{K} \coloneqq (\cl{G} \times \cl{H}) \ltimes P$.

 Given a 2-metric $\eta^2$ on $\cl{G}$, we may construct by \cite[Proposition 6.3.1]{HoyLoj19} a 2-metric $\ti{\eta}^2$ on $\cl{K}$ that is $a$-transverse; this is the pullback of $\eta^2$ along $\phi$. Applying an averaging process to $\ti{\eta}^2$, by \cite[Proposition 6.3.2]{HoyLoj19} we may construct another 2-metric $(\ti{\eta}')^2$, equivalent to $\ti{\eta}^2$, which pushes down to a 2-metric $P_*\eta^2$ on $\cl{H}$ that is $b$-transverse. It is proved in \cite[Theorem 6.3.3]{HoyLoj19} that the assignment $\eta \mapsto P_*\eta$ is a well-defined bijection on equivalence classes of 2-metrics. Moreover, examining that proof yields the following commuting diagram for the induced 0-metrics:
 \begin{equation}
   \label{eq:transporting-2-metrics}
   \begin{tikzcd}
     N_2\cl{F}_{\cl{G}} & &  N_2\cl{F}_{\cl{K}} & & N_2\cl{F}_{\cl{H}} \\
     & \R & & \R. & \\
     \ar["{[T_2a]}"', from=1-3, to=1-1]
     \ar["{[T_2b]}", from=1-3, to=1-5]
     \ar["\eta^N"', from=1-1, to=2-2]
     \ar["{\ti{\eta}}^N", from=1-3, to=2-2]
     \ar["{(\ti{\eta}')}^N"', from=1-3, to=2-4]
     \ar["{(P_*\eta)^N}", from=1-5, to=2-4]
     \ar[equals, from=2-2, to=2-4]
   \end{tikzcd}
 \end{equation}
 Put another way, the map $[T_2a] \colon (N_p\cl{F}_{\cl{K}}, {\ti{\eta}}^N) \to (N_{a(p)}\cl{F}_{\cl{G}}, \eta^N)$ is an isometry for each $p \in P$, similarly for $[T_2b]$, and the metrics $\ti{\eta}$ and $\ti{\eta}'$ are equivalent.

 \begin{definition}[{\cite[Section 6]{HoyLoj19}}]
   A \define{Riemannian metric} on a stack is an equivalence class of 2-metrics on a Lie groupoid presenting the stack.
 \end{definition}
 
 \begin{remark}
   For an \'{e}tale Lie groupoid, since the underlying foliation is trivial, two 2-metrics are equivalent if and only if they induce the same 0-metrics. Furthermore, we saw that every 0-metric on an \'{e}tale Lie groupoid comes from a 2-metric. Therefore a Riemannian metric on a stack presented by an \'{e}tale Lie groupoid is entirely determined by a choice of 0-metric.
 \end{remark}

\section{Induced Riemannian metrics}
\label{sec:induc-riem-metr}

\subsection{Defining an induced metric}
\label{sec:defin-an-induc}

A 0-metric $\eta$ on a Lie groupoid $\cl{G} \rra M$ gives a map $\eta^N\colon N_2\cl{F} \to \R$, whereas a diffeological metric on $M/\cl{G}$ is a map $T_2(M/\cl{G}) \to \R$. To pass from one kind of metric to the other, we use a canonical map $N_2\cl{F} \to T_2(M/\cl{G})$. We produce this map in two steps.

Let $\cat{J}$ be the category with two objects $1,0$ and two morphisms between them. An object of the functor category $\cat{Dflg}^{\cat{J}}$ is a pair of parallel morphisms $X \rra Y$. Since $\cat{Dflg}$ is co-complete, we have the functor $\colim\colon \cat{Dflg}^{\cat{J}} \to \cat{Dflg}$. Recall that this assigns to any pair of parallel morphisms $\alpha, \beta \colon X \to Y$ the quotient of $Y$ by the relation generated by declaring $\alpha(x) \sim \beta(x)$. 

Let $J\colon \cat{LieGpd} \to \cat{Dflg}^{\cat{J}}$ be the forgetful functor that views a Lie groupoid $\cl{G} \rra M$ as a pair of parallel morphisms between diffeological spaces. Note that here, $\cat{LieGpd}$ is the 1-category whose objects are Lie groupoids, and whose morphisms are smooth functors. By the universal property of colimits, for any functor $F \colon \cat{Dflg} \to \cat{Dflg}$, there is a natural transformation $\colim \circ F_* \to F \circ \colim$. Then, whiskering along $J$, we have a natural transformation $\colim \circ F_* \circ J \to F \circ \colim \circ J$. We are interested in the case $F=T_2$. 
\begin{definition}
  \label{def:defines-lambda-1}
  We let $\lambda_1$ denote the natural transformation
  \begin{equation*}
    \lambda_1 \colon \colim \circ (T_2)_* \circ J \to T_2 \circ \colim \circ J.
  \end{equation*}
\end{definition}
We can write out this transformation explicitly. The functor $T_2$ preserves pullbacks along submersions between manifolds. Therefore, given a Lie groupoid $\cl{G} \rra M$, we may apply $T_2$ to get a Lie groupoid $T_2\cl{G} \rra T_2M$.  Then $\lambda_1$ assigns to $\cl{G}$ the map, also denoted $\lambda_1$, which fits into the following diagram:
\begin{equation*}
  \begin{tikzcd}
    & T_2M & \\
    T_2M/T_2\cl{G} & & T_2(M/\cl{G})
    \ar["\text{quot}"', from=1-2, to=2-1]
    \ar["T_2\pi", from=1-2, to=2-3]
    \ar["\lambda_1", from=2-1, to=2-3]
  \end{tikzcd}
\end{equation*}
The quotient map on the left is a subduction by definition. Since $\hat{T}_2$ is a cosheaf, the functor $T_2$ preserves subductions \cite[Proposition 2.2.16]{Bloh24b}, hence $T_2\pi$ is also a subduction. Therefore $\lambda_1$ is a subduction.

For a map $N_2\cl{F} \to T_2M/T_2\cl{G}$, we pass through the Lie algebroid. Recall that the Lie algebroid of $\cl{G}$ is the vector bundle $A \coloneqq \ker(Ts)|_M$ over $M$, with anchor map $\rho \coloneqq Tt|_M \colon A \to TM$. The Lie algebroid $A$, viewed as a bundle of Lie groups, acts on $TM$ from the left along the footprint map $TM \to M$, by $\xi \cdot v \coloneqq \rho(\xi) + v$. The Whitney sum $A_2\coloneqq A \oplus A$ acts on $T_2M$ in the same way.  Thus we have the action Lie groupoid $A_2 \ltimes T_2M$.

Consider now the functors
\begin{align*}
  \bb{N}_2\colon \cat{LieGpd} \to \cat{LieGpd}, &\quad \cl{G} \mapsto A_2\ltimes T_2M,\\
  \bb{T}_2\colon \cat{LieGpd} \to \cat{LieGpd}, &\quad \cl{G} \mapsto T_2\cl{G}.
\end{align*}
The inclusion $A \subseteq T\cl{G}$ gives rise to the inclusion functor $A_2 \ltimes T_2M  \to T_2\cl{G}$, and this defines a natural transformation $\bb{N}_2 \to \bb{T}_2$. By whiskering along $\colim \circ J$, we can make the definition:
\begin{definition}
  Let $\lambda_2$ denote the natural transformation
  \begin{equation*}
    \lambda_2 \colon \colim \circ J \circ \bb{N}_2 \to \colim \circ J \circ \bb{T}_2.
  \end{equation*}
\end{definition}

To write out $\lambda_2$ explicitly, observe that the orbit space of $A_2 \ltimes T_2M$ is exactly $N_2\cl{F}$, because $\rho(A) = \cl{F}$. Then, we can recognize $\lambda_2\colon N_2\cl{F} \to T_2M/T_2\cl{G}$ as the unique map that fits into the diagram
\begin{equation*}
  \begin{tikzcd}
    & T_2M & \\
    N_2\cl{F} & & T_2M/T_2\cl{G}.
    \ar["\text{quot} \times \text{quot}"', from=1-2, to=2-1]
    \ar["\text{quot}", from=1-2, to=2-3]
    \ar["\lambda_2", from=2-1, to=2-3]
  \end{tikzcd}
\end{equation*}
Since both downward maps are subductions, so is $\lambda_2$.

By vertically composing $\lambda_2$ and $\lambda_1$, we get the sought-after natural transformation.
\begin{definition}
  We let
  \begin{equation*}
    \lambda \colon \colim \circ \bb{N}_2 \circ J \to T_2\circ \colim \circ J
  \end{equation*}
  denote the vertical composition of the natural transformations $\lambda_1$ and $\lambda_2$. 
\end{definition}
The map $\lambda$ assigns to $\cl{G}$ the unique map (also denoted $\lambda$) making the following diagram commute:
\begin{equation*}
  \begin{tikzcd}
    & T_2M & \\
    N_2\cl{F} & & T_2(M/\cl{G}).
    \ar["\text{quot} \times \text{quot}"', from=1-2, to=2-1]
    \ar["T_2\pi", from=1-2, to=2-3]
    \ar["\lambda", from=2-1, to=2-3]
  \end{tikzcd}
\end{equation*}
The map $\lambda$ is a subduction.

\begin{definition}
  \label{def:induces}
  We say that a 0-metric $\eta$ on $\cl{G}$ \define{induces} the weak Riemannian metric $\ol{\eta}$ on $M/\cl{G}$, or \define{descends} to $\ol{\eta}$, if the following diagram commutes (in $\cat{Set}$):
  \begin{equation}
    \label{eq:diagram-for-induces}
    \begin{tikzcd}
     N_2\cl{F} & \R \\
     T_2(M/\cl{G}) &
     \ar["\eta^N", from=1-1, to=1-2]
     \ar["\lambda", from=1-1, to=2-1]
     \ar["\ol{\eta}"', from=2-1, to=1-2]
    \end{tikzcd}
  \end{equation}
  In other words, we ask that $\eta^N$ is constant on the fibres of $\lambda$.
\end{definition}

\begin{remark}
  \label{rem:justifies-definition-of-induces}
  The map $\eta^N \colon N_2\cl{F} \to \R$ induced by a 0-metric $\eta$ need not be smooth. Nevertheless, it is a $\cl{G}$-invariant map. This means it descends to a map $N_2\cl{G}/\cl{F} \to \R$. It also possible to (naturally) relate $N_2\cl{F} / \cl{G}$ and $T_2(M/\cl{G})$: this is because the quotient map $T_2M \to N_2\cl{F} /\cl{G}$ also co-equalizes the pair $(T_2t, T_2s)$, by definition of the $\cl{G}$-action on $N_2\cl{F}$. Thus we have a map $\lambda_3$ that fits into the diagram:
    \begin{equation*}
    \begin{tikzcd}
      & T_2M & \\
      & N_2\cl{F} & \\
      N_2\cl{F}/\cl{G} & T_2M/T_2\cl{G} & T_2(M/\cl{G}).
      \ar[from=1-2, to=2-2]
      \ar[ from=1-2, to=3-1]
      \ar["T_2\pi", from=1-2, to=3-3]
      \ar["\lambda_2", from=2-2, to=3-2]
      \ar[from=2-2, to=3-1]
      \ar["\lambda"', from=2-2, to=3-3]
      \ar["\lambda_3", from=3-2, to=3-1]
      \ar["\lambda_1"', from=3-2, to=3-3]
    \end{tikzcd}
  \end{equation*}

  Every arrow except \emph{a priori} $\lambda_3$ is a subduction, so \emph{a fortiori} $\lambda_3$ is a subduction. Moreover, it is injective. To see this, it suffices to show that $\lambda_2$ is $\cl{G}$-invariant. Take $[v],[w] \in N_x\cl{F}$, and take $g \in \cl{G}$ such that $s(g) = x$. Fix $\xi, \nu \in T_g\cl{G}$ with $s_*\xi = v$ and $s_*\nu = w$. By definition of the $\cl{G}$-action, $g\cdot([v],[w]) = ([t_*\xi], [t_*\nu])$, so
  \begin{equation*}
    \lambda_2(g\cdot([v],[w])) = \lambda_2([t_*\xi],[t_*\nu]) = \lambda_2([s_*\xi], [s_*\nu]) = \lambda_2([v], [w]),
  \end{equation*}
  as desired.

We conclude that $\cl{G}$-invariance of $\eta^N$ means that it descends along $\lambda_2$ to a map $T_2M/T_2\cl{G} \to \R$, and to say that $\eta$ induces $\ol{\eta}$ is equivalent to asking that the map $T_2M/T_2\cl{G} \to \R$ further descends along $\lambda_1$ to a map $\ol{\eta}\colon T_2(M/\cl{G}) \to \R$.
\end{remark}

\begin{example}
  Let $\cl{G} \rra M$ be an \'{e}tale Lie groupoid, and let $g$ be some Riemannian metric on $M/\cl{G}$ that is definite with respect to $\pi\colon M \to M/\cl{G}$.  We saw in Proposition \ref{prop:metric-on-orbifold-lifts} that $\pi^*g$ is a 0-metric on $\cl{G}$. Furthermore, because in this case $T_2M = N_2\cl{G}$, we see that $\lambda = T_2\pi$, and so $\pi^*g$ is a 0-metric that induces $g$.
\end{example}

It is important to note that the diagram in Definition \ref{def:induces} takes place in $\cat{Set}$, since in general the map $\eta^N$ is not smooth. However, the map $\lambda \colon N_2\cl{F} \to T_2(M/\cl{G})$ is always smooth, where $N_2\cl{F}$ carries its quotient diffeology, and a weak Riemannian metric $\ol{\eta}\colon T_2(M/\cl{G}) \to \R$ is by definition a smooth map. It follows that if a 0-metric $\eta$ induces $\ol{\eta}$, then the map $\eta^N$ must be smooth with respect to the quotient diffeology on $N_2\cl{F}$. This observation implies that to induce a weak Riemannian metric is, in fact, a fairly restrictive condition.

\begin{proposition}
  \label{prop:regularity-is-necessary}
Suppose that $\cl{G} \rra M$ is a Lie groupoid over a connected base, and that the 0-metric $\eta$ induces a weak Riemannian metric on $M/\cl{G}$. Then $\cl{G}$ is regular.
\end{proposition}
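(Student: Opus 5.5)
Since $\ol{\eta}$ is a weak Riemannian metric, it is smooth, and $\lambda$ is smooth. Commutativity of \eqref{eq:diagram-for-induces} therefore shows that $\eta^N = \ol{\eta}\circ\lambda$ is smooth on $N_2\cl{F}$ with its quotient diffeology. The plan is to turn this smoothness into smoothness of $I^{-1}\colon N\cl{F}\to T\cl{F}^\perp$, and then apply Proposition \ref{prop:when-I-is-a-diffeomorphism}.

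The first step is to recover orthogonal projection from $\eta^N$. Let $P\colon TM \to TM$ denote fibrewise $\eta$-orthogonal projection onto $T\cl{F}^\perp$, so that $P = \iota\circ I^{-1}\circ \text{quot}$.

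Fix a point $x_0$ and a local frame $e_1,\dots,e_m$ of $TM$ on a neighbourhood $U$ of $x_0$. For $v\in T_xM$ with $x\in U$, we have the identity
\begin{equation*}
  \eta(P v, e_i(x)) = \eta(Pv, Pe_i(x)) = \eta^N([v],[e_i(x)]).
\end{equation*}
The right-hand side is a composition of the smooth map $TM|_U \to N_2\cl{F}$, given by $v\mapsto ([v],[e_i(\pi v)])$, with $\eta^N$. That map is smooth because $\text{quot}\times\text{quot}\colon T_2M\to N_2\cl{F}$ is smooth.

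Hence the functions $v\mapsto \eta(Pv,e_i)$ are smooth on $TM|_U$ for all $i$. Since $\eta$ is nondegenerate and $\{e_i\}$ is a frame, $Pv$ is recovered from these coefficients by inverting the smooth Gram matrix $(\eta(e_i,e_j))$. Therefore $P$ is a smooth bundle endomorphism of $TM$.

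Next we show that $I^{-1}$ is smooth. A plot of $N\cl{F}$ locally lifts to a plot $q$ of $TM$, and then $I^{-1}$ applied to the plot is $P\circ q$. This is a smooth map into $TM$ landing in $T\cl{F}^\perp$, so it is a plot of the subset diffeology. Thus $I^{-1}$ is smooth and $I$ is a diffeomorphism.

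Proposition \ref{prop:when-I-is-a-diffeomorphism}, with $M$ connected, then gives that $\cl{G}$ is regular. Alternatively, the semicontinuity argument in that proof applies directly to the smooth endomorphism $P$: its rank is lower semicontinuous and equals $\dim M - \dim_{\cl{F}}$, which is upper semicontinuous.

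The main obstacle is the first step: extracting smoothness of $P$ from smoothness of the bilinear map $\eta^N$ alone. The key choice is to pair $[v]$ with the classes $[e_i]$ of an ambient smooth frame rather than with a frame of $T\cl{F}^\perp$, since the latter need not exist. This works because $\eta(Pv, e_i) = \eta(Pv, Pe_i)$.
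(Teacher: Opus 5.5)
Your proof is correct, but it takes a different route from the paper's. The paper's proof uses only the continuity of $\eta^N$ for the D-topology (the quotient topology on $N_2\cl{F}$). If $v_k \to v$ with $v_k \in T\cl{F}$, then $\eta^N([v],[v]) = \lim \eta^N([v_k],[v_k]) = 0$, so $v \in T\cl{F}$, and hence $T\cl{F}$ is closed in $TM$. A separate compactness argument in $\operatorname{Gr}(k,n)$ then shows that a closed smooth generalized distribution over a connected base has constant rank, and Lemma \ref{lem:regular-definition} concludes. Proposition \ref{prop:when-I-is-a-diffeomorphism} is never invoked there. You instead use the full smoothness of $\eta^N$ to reconstruct the orthogonal projection $P$, via $\eta(Pv,e_i)=\eta^N([v],[e_i])$ and the inverse Gram matrix. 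This makes $I^{-1}$ smooth and reduces the statement to Proposition \ref{prop:when-I-is-a-diffeomorphism}. Your route is shorter, avoids the Grassmannian, and links the result to that earlier proposition. It also shows the sharper fact that a descending $0$-metric forces $I$ to be a diffeomorphism. Moreover, the rank of a merely continuous field of endomorphisms is still lower semicontinuous, so your ``alternatively'' variant works under the paper's weaker hypothesis that $\eta^N$ is only continuous. In exchange, the paper's route isolates a geometric fact of independent interest: a closed smooth generalized distribution on a connected manifold is regular.
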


\begin{proof}
The commutativity of Diagram \eqref{eq:diagram-for-induces} implies that $\eta^N \colon N_2\cl{F} \to \R$ is smooth, and in particular it is continuous with respect to the D-topology. The D-topology of $N_2\cl{F}$ is the usual quotient topology.  Let $v_k \to v$ be a convergent sequence in $TM$, such that $v_k \in T\cl{F}$ for each $k$. Then, $[v_k] = 0 \in N\cl{F}$, and so by continuity and the fact $\eta^N$ restricts to a metric on each fibre of $N\cl{F}$,
  \begin{equation*}
     \eta^N([v_k], [v_k]) = 0 \text{ for each }k \implies \eta^N([v],[v]) = 0.
   \end{equation*}
   Since $\eta^N$ is non-degenerate on the fibres of $N\cl{F}$, this means $[v] = 0 \in N\cl{F}$, or equivalently $v \in T\cl{F}$. Therefore $T\cl{F}$ is a closed smooth generalized distribution in $TM$. But we now show that closed smooth generalized distributions are necessarily regular, and then Lemma \ref{lem:regular-definition} will imply that $\cl{G}$ is regular.

   By lower-semicontinuity of the dimension map $x \mapsto \dim(T_x\cl{F})$, the set of points $A \subseteq M$ at which $\dim T_x\cl{F}$ is maximized (say, with value $k$) is open. We show it is closed, so let $x_k$ be a sequence of points in $A$ that converges to $x \in M$. Let $U \subseteq M$ be a coordinate chart of $M$ about $x$, giving a local trivialization $\Phi\colon TM|_U \to U \times \R^n$. Then $\{\pr_2(\Phi(T_{x_k}\cl{F}))\}_{k=1}^\infty$ is a sequence of $k$-planes in the Grassmannian $\operatorname{Gr}(k,n)$. The Grassmannian is compact, so by passing to a subsequence and re-labelling if necessary, we assume that this sequence converges to a $k$-plane $E \subseteq \R^n$.

   By definition of convergence in $\operatorname{Gr}(k,n)$, a vector $(x,v) \in U \times \R^n$ is in $\{x\} \times E$ if and only if it arises as a limit of a sequence $\{(x_k, v_k)\}$ with $v_k \in \Phi(T_{x_k}\cl{F})$. Since $T\cl{F} \subseteq TM$ is closed, this is equivalent to  $\Phi^{-1}(x,v) \in T_x\cl{F}$. Therefore $\Phi^{-1}(\{x\} \times E) = T_x\cl{F}$, and so $\dim(T_x\cl{F}) = k$, and $A$ is closed. Since $M$ is connected, $A = M$, and so $T\cl{F}$ is a regular subbundle of $TM$. This completes the proof.  
\end{proof}

In light of Proposition \ref{prop:regularity-is-necessary}, we may restrict our attention to regular Lie groupoids. In the regular case, recall that the identification $I\colon T\cl{F}^\perp \to N\cl{F}$ induced by a 0-metric $\eta$ (see Diagram \eqref{eq:definition-of-I}) is an isomorphism of smooth vector bundles, and thus $\eta^N \colon N_2\cl{F} \to \R$ is smooth. The next lemma states that, to test whether $\eta$ induces a weak Riemannian metric, it suffices to find a map $\ol{\eta}$ making Diagram \eqref{eq:diagram-for-induces} commute.

\begin{lemma}
  \label{lem:induced-map-is-weak-metric}
  Suppose that $\eta$ is a 0-metric on a regular Lie groupoid $\cl{G} \rra M$, and that there is some map $\ol{\eta} \colon T_2(M/\cl{G}) \to \R$ that makes Diagram \eqref{eq:diagram-for-induces} commute. Then $\ol{\eta}$ is a weak Riemannian metric on $M/\cl{G}$. Furthermore, $\ol{\eta}$ is definite with respect to the generating family $\{\pi|_T\colon T \to M/\cl{G} \mid T \perp \cl{F}\}$. 
\end{lemma}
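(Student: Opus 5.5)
We need to show two things: that $p^*\ol{\eta}$ is symmetric and positive for every plot $p$, and that it is definite on the transversal plots $\pi|_T$.

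\emph{Reduction to plots of $M$.} Let $p\colon U\to M/\cl{G}$ be a plot. By definition of the quotient diffeology, $p$ locally lifts along $\pi$: about each $r\in U$ there is an open $W$ and a smooth $q\colon W\to M$ with $p|_W=\pi\circ q$. Being a symmetric, positive covariant 2-tensor is a pointwise condition, and $T_2$ is functorial. Hence it suffices to treat $p|_W$, for which
\begin{equation*}
  (p|_W)^*\ol{\eta}=\ol{\eta}\circ T_2\pi\circ T_2q .
\end{equation*}
Commutativity of Diagram \eqref{eq:diagram-for-induces}, together with the defining triangle $\lambda\circ(\text{quot}\times\text{quot})=T_2\pi$, gives
\begin{equation*}
  \ol{\eta}\circ T_2\pi=\eta^N\circ(\text{quot}\times\text{quot})\colon T_2M\to\R .
\end{equation*}
Here $\eta^N\circ(\text{quot}\times\text{quot})$ is the tensor $(v,w)\mapsto\eta(P^\perp v,P^\perp w)$, where $P^\perp$ is the $\eta$-orthogonal projection onto $T\cl{F}^\perp$. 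Because $\cl{G}$ is regular, $T\cl{F}$ is a smooth subbundle, so $P^\perp$ is a smooth bundle endomorphism. Thus $\ol{\eta}\circ T_2\pi$ is a smooth, symmetric, positive semi-definite 2-tensor on $M$, and its pullback along $q$ is again symmetric and positive. This proves that $\ol{\eta}$ is a weak Riemannian metric. (Smoothness of $\ol{\eta}$ then follows from the remark after Definition \ref{def:weak-riemannian-metric}, or directly because $\lambda$ is a subduction and $\eta^N$ is smooth.)

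\emph{Generating family.} I will use the fact recalled earlier that the family $\{\pi|_T\mid T\perp\cl{F}\}$ generates the quotient diffeology. Strictly, the $\pi|_T$ have domain the manifold $T$ rather than an open subset of Euclidean space, so the family should be read as $\{\pi|_T\circ\varphi\}$ with $\varphi$ ranging over charts of $T$; definiteness is insensitive to composing with such charts.

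\emph{Definiteness.} For a transversal $T$ and $x\in T$,
\begin{equation*}
  (\pi|_T)^*\ol{\eta}_x(v,w)=\eta\bigl(P^\perp v,P^\perp w\bigr)\quad\text{for } v,w\in T_xT .
\end{equation*}
Since $T_xM=T_xT\oplus T_x\cl{F}$, the restriction $P^\perp|_{T_xT}\colon T_xT\to T_x\cl{F}^\perp$ has kernel $T_xT\cap T_x\cl{F}=0$, so it is injective. Therefore $(\pi|_T)^*\ol{\eta}_x(v,v)=0$ forces $P^\perp v=0$ and hence $v=0$, which gives positive definiteness.

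\emph{Main obstacle.} The delicate step is the bookkeeping identity $\ol{\eta}\circ T_2\pi=\eta^N\circ(\text{quot}\times\text{quot})$ and its identification with the smooth tensor $\eta(P^\perp\cdot,P^\perp\cdot)$. This needs the explicit formula for $I^{-1}$ in the regular case, namely $I^{-1}[v]=P^\perp v$, which holds because $I$ is a diffeomorphism by Proposition \ref{prop:when-I-is-a-diffeomorphism}. The rest is routine.
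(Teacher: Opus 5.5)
Your proof is correct and follows the paper's argument: lift plots locally along $\pi$, use $\ol{\eta}\circ T_2\pi=\eta^N\circ\text{quot}$, and obtain definiteness on a transversal $T$ from the fact that $T_xT\to N_x\cl{F}$ is an isomorphism, which is exactly your injectivity of $P^\perp|_{T_xT}$. One small correction to your closing remark: $I^{-1}[v]=P^\perp v$ holds pointwise for every Lie groupoid, since $I$ is always a bijection, so regularity is needed only to make $P^\perp$, and hence the tensor, smooth, as you correctly say earlier in your argument.
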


\begin{proof}
Since $\lambda$ is a subduction, the induced map $\ol{\eta} \colon T_2(M/\cl{G}) \to \R$ is smooth.

  Let $p\colon U \to M/\cl{G}$ be a plot, and take $r \in U$. We may lift $p$ locally along $\pi$ at $r$ to a plot $q \colon (U,r) \dashrightarrow M$. Denote the quotient map $T_2M \to N_2\cl{F}$ by $\text{quot}$. Then
  \begin{align*}
    p^* \ti{\eta} &= q^* \pi^* \ol{\eta} \\
                  &= q^*(\ol{\eta} \circ T_2\pi) \\
                  &= q^*(\ol{\eta} \circ \lambda \circ \text{quot}) \\
                  &= q^*(\eta^N \circ \text{quot}),
  \end{align*}
  and $q^*(\eta^N \circ \text{quot})$ is a symmetric and positive covariant 2-tensor. This proves that $\ol{\eta}$ is a weak Riemannian metric.

  Now fix a transversal $T$ to $\cl{F}$. Denote by $\iota$ the embedding $T \hookrightarrow M$. Then, reusing the previous computation,
  \begin{equation*}
    (\pi|_T)^*\ol{\eta} = \ol{\eta} \circ T_2\pi \circ T_2\iota =  (\eta^N \circ \text{quot}) \circ T_2\iota.
  \end{equation*}
  Since we chose $T$ to be transverse to $\cl{F}$, the map $T_2\iota \colon T_2T \to N_2\cl{F}$ is a fibrewise an isomorphism. Therefore, since $\eta^N$ is definite, so is $(\eta^N \circ \text{quot}) \circ T_2\iota = (\pi|_T)^*\ol{\eta}$.
\end{proof}

We now introduce our main criterion for determining whether a 0-metric descends to the orbit space. 
\begin{proposition}
  \label{prop:lambda-1-iso-suffices}
  Suppose that $\eta$ is a 0-metric on a regular Lie groupoid $\cl{G} \rra M$, and that
  \begin{equation*}
    \lambda_1\colon T_2M/T_2\cl{G} \to T_2(M/\cl{G})   
  \end{equation*}
  is an isomorphism. Then $\eta$ induces a weak Riemannian metric on $M/\cl{G}$ that is definite with respect to $\{\pi|_T\colon T \to M/\cl{G} \mid T \perp \cl{F}\}$.
\end{proposition}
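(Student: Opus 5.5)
The plan is to factor $\eta^N$ through $\lambda_2$ using $\cl{G}$-invariance, then through $\lambda_1$ using the hypothesis. After that, Lemma \ref{lem:induced-map-is-weak-metric} gives both smoothness and definiteness. Recall that $\lambda = \lambda_1 \circ \lambda_2$, where $\lambda_2\colon N_2\cl{F} \to T_2M/T_2\cl{G}$ and $\lambda_1\colon T_2M/T_2\cl{G}\to T_2(M/\cl{G})$. By Definition \ref{def:induces}, it suffices to produce a set map $\ol{\eta}\colon T_2(M/\cl{G}) \to \R$ with $\ol{\eta}\circ\lambda = \eta^N$. Since $\lambda_1$ is assumed to be an isomorphism, it is enough to find $\eta'\colon T_2M/T_2\cl{G} \to \R$ with $\eta'\circ\lambda_2 = \eta^N$, and then set $\ol{\eta} \coloneqq \eta'\circ\lambda_1^{-1}$.

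To construct $\eta'$, I would show that $\eta^N$ is constant on the fibres of $\lambda_2$. Then $\eta'$ is well defined, because $\lambda_2$ is surjective, being a subduction. Remark \ref{rem:justifies-definition-of-induces} supplies the key identification. The map $\lambda_3\colon T_2M/T_2\cl{G} \to N_2\cl{F}/\cl{G}$ is a bijection, satisfying $\lambda_3\circ\lambda_2 = \text{quot}_{\cl{G}}$, the quotient by the $\cl{G}$-action on $N_2\cl{F}$. Hence two elements of $N_2\cl{F}$ have the same image under $\lambda_2$ exactly when they lie in the same $\cl{G}$-orbit. The defining property of a 0-metric is that $\eta^N$ is $\cl{G}$-invariant, so $\eta^N$ is constant on these fibres, as required.

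Concretely, I would set $\eta' \coloneqq \tilde{\eta}^N\circ\lambda_3$, where $\tilde{\eta}^N\colon N_2\cl{F}/\cl{G}\to\R$ is the map induced by invariance. Then $\eta'\circ\lambda_2 = \tilde{\eta}^N\circ\text{quot}_{\cl{G}} = \eta^N$. Consequently $\ol{\eta}\circ\lambda = \eta'\circ\lambda_1^{-1}\circ\lambda_1\circ\lambda_2 = \eta^N$, so Diagram \eqref{eq:diagram-for-induces} commutes. Lemma \ref{lem:induced-map-is-weak-metric} now applies, since $\cl{G}$ is regular. It shows that $\ol{\eta}$ is a weak Riemannian metric that is definite with respect to $\{\pi|_T \mid T\perp\cl{F}\}$.

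The main obstacle is the injectivity of $\lambda_3$, which is asserted in Remark \ref{rem:justifies-definition-of-induces}. The argument given there really shows $\cl{G}$-invariance of $\lambda_2$, which yields the factorization through $N_2\cl{F}/\cl{G}$, that is, the surjection $N_2\cl{F}/\cl{G} \to T_2M/T_2\cl{G}$ rather than the map $\lambda_3$ in the other direction. If I cannot rely on the remark as stated, I would prove the needed fibre property directly. The relation on $T_2M$ defining $T_2M/T_2\cl{G}$ is generated by $T_2s(\Xi)\sim T_2t(\Xi)$ for $\Xi\in T_2\cl{G}$. Writing $\Xi=(\xi,\nu)\in T_g\cl{G}\times T_g\cl{G}$, the image of $T_2t(\Xi)$ in $N_2\cl{F}$ is by definition $g\cdot$ (the image of $T_2s(\Xi)$). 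Hence each generating relation maps into a single $\cl{G}$-orbit of $N_2\cl{F}$, and so does the equivalence relation it generates. The function $\eta^N\circ\text{quot}$ is therefore constant on equivalence classes of $T_2M/T_2\cl{G}$, which is exactly what is needed for $\eta'$ to be well defined.
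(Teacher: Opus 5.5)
Your proposal is correct and follows the paper's proof. As in the paper, you use $\cl{G}$-invariance of $\eta^N$ (via Remark \ref{rem:justifies-definition-of-induces}) to factor it through $\lambda_2$, invert $\lambda_1$ to get $\ol{\eta}$ with $\ol{\eta}\circ\lambda=\eta^N$, and conclude with Lemma \ref{lem:induced-map-is-weak-metric}. Your worry about the Remark is unnecessary: $\cl{G}$-invariance of $\lambda_2$, together with $\lambda_3\circ\lambda_2=\mathrm{quot}_{\cl{G}}$ and surjectivity of $\lambda_2$, does give injectivity of $\lambda_3$. In any case, your direct argument proves exactly the direction the proof needs, namely that fibres of $\lambda_2$ lie in $\cl{G}$-orbits.
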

We introduced $\lambda_1$ in Definition \ref{def:defines-lambda-1}, and saw that it is a subduction. Thus, if $\lambda_1$ is injective, then it is an isomorphism.
\begin{proof}
  The map $\eta^N$ corresponding to the 0-metric $\eta$ is already constant on the fibres of $\lambda_2$ (this is equivalent to $\cl{G}$-invariance of $\eta^N$ by Remark \ref{rem:justifies-definition-of-induces}). So if $\lambda_1$ is an isomorphism, then $\eta^N$ is invariant under the fibres of $\lambda$, which means we have a map $\ol{\eta}$ making Diagram \eqref{eq:diagram-for-induces} commute. The rest of the claims follow from Lemma \ref{lem:induced-map-is-weak-metric}. 
\end{proof}

In the next section we show that properness is a sufficient condition for a 0-metric to descend to the orbit space. To end this section, we give the stacky version of Definition \ref{def:induces}. Recall that the data of a Riemannian stack consists of a Lie groupoid equipped with an equivalence class of 2-metrics.

\begin{definition}
  We say that an equivalence class of 2-metrics $[\eta^2]$ on a Lie groupoid $\cl{G} \rra M$ \define{induces} a weak Riemannian metric $\ol{\eta}$ on $M/\cl{G}$ if the 0-metric associated to any representative of $[\eta^2]$ induces $\ol{\eta}$ in the sense of Definition \ref{def:induces}. 
\end{definition}
Since two 2-metrics are defined to be equivalent if their 0-metrics give the same map $N_2\cl{F} \to \R$, the above notion is evidently well-defined. However, a stack may be presented by different Morita equivalent Lie groupoids, so we should make precise the sense in which our notion of induced metric is invariant under Morita equivalence.

\begin{proposition}
  Suppose that $P\colon \cl{G} \to \cl{H}$ is a Morita equivalence and $\eta^2$ is a 2-metric on $\cl{G}$, with associated 0-metric $\eta$. Then $\eta$ induces a weak Riemannian metric on $M/\cl{G}$ if and only if $P_*\eta$, the 0-metric associated to $P_*\eta^2$, induces a weak Riemannian metric on $N/\cl{H}$.
\end{proposition}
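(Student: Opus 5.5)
The plan is to pass through the auxiliary groupoid $\cl{K} = (\cl{G}\times\cl{H})\ltimes P$ and prove two one-sided statements, one for each leg of the span $\cl{G} \xleftarrow{\phi} \cl{K} \xrightarrow{\psi} \cl{H}$.

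\emph{Step 1: the legs are diffeomorphisms on orbit spaces.} The maps $a\colon P \to M$ and $b\colon P\to N$ are surjective submersions whose fibres are exactly the $\cl{H}$- and $\cl{G}$-orbits, respectively, by principality. Hence $F(\phi)\colon P/\cl{K} \to M/\cl{G}$ and $F(\psi)\colon P/\cl{K}\to N/\cl{H}$ are bijective, and they are subductions because $\pi\circ a$ is. So both are diffeomorphisms, and $F(P) = F(\psi)\circ F(\phi)^{-1}$.

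\emph{Step 2: the key lemma.} Let $f\colon \cl{K}\to\cl{G}$ be a smooth functor with $f_0$ a surjective submersion and $F(f)$ a diffeomorphism; concretely $f = \phi$ with $f_0 = a$. Let $\ti\eta$ be a 0-metric on $\cl{K}$ and $\eta$ a 0-metric on $\cl{G}$ such that $[T_2f_0]\colon N_2\cl{F}_{\cl{K}} \to N_2\cl{F}_{\cl{G}}$ intertwines $\ti\eta^N$ and $\eta^N$. I claim that $\eta$ induces a weak Riemannian metric if and only if $\ti\eta$ does. By naturality of $\lambda$ (it is a natural transformation), we have $\lambda_{\cl{G}}\circ[T_2f_0] = T_2F(f)\circ\lambda_{\cl{K}}$, and $T_2F(f)$ is a diffeomorphism.

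For $(\Rightarrow)$: if $\ol\eta$ makes diagram \eqref{eq:diagram-for-induces} commute for $\cl{G}$, then $\ol\eta\circ T_2F(f)$ works for $\cl{K}$, since $\ti\eta^N = \eta^N\circ[T_2f_0]$.

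For $(\Leftarrow)$: I need $[T_2f_0]$ to be surjective, which holds because $T_2a$ is surjective when $a$ is a submersion. Then any two points of $N_2\cl{F}_{\cl{G}}$ in the same $\lambda_{\cl{G}}$-fibre lift to points of $N_2\cl{F}_{\cl{K}}$ in the same $\lambda_{\cl{K}}$-fibre, because $T_2F(f)$ is injective. So $\eta^N$ is constant on the fibres of $\lambda_{\cl{G}}$.

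Smoothness and weak-metric properties then follow from Lemma~\ref{lem:induced-map-is-weak-metric}, once regularity is known. Regularity is itself Morita invariant, since the orbit codimensions agree via $a$ and $b$. If one of the two groupoids fails to be regular, neither side induces a metric, by Proposition~\ref{prop:regularity-is-necessary} applied componentwise.

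\emph{Step 3: assemble.} Apply Step 2 to $\phi$ with the pair $(\ti\eta,\eta)$, and to $\psi$ with the pair $(\ti\eta', P_*\eta)$. The intertwining hypotheses are exactly diagram \eqref{eq:transporting-2-metrics}. Since $\ti\eta$ and $\ti\eta'$ are equivalent, $\ti\eta^N = (\ti\eta')^N$, so "$\ti\eta$ induces" and "$\ti\eta'$ induces" are the same condition. Chaining the two equivalences gives the proposition, and the induced metrics correspond under $T_2F(P)$.

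The main obstacle I expect is the surjectivity and fibre-lifting argument in Step 2 $(\Leftarrow)$. One must check that $\lambda_{\cl{K}}$-fibres map onto $\lambda_{\cl{G}}$-fibres. I plan to handle this using only injectivity of $T_2F(f)$ together with surjectivity of $[T_2f_0]$. This works because two points of $N_2\cl{F}_{\cl{G}}$ with equal $\lambda_{\cl{G}}$-image have lifts whose $\lambda_{\cl{K}}$-images agree after applying the injective map $T_2F(f)$.
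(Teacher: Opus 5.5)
Your proposal is correct and is essentially the paper's argument. Both pass through $\cl{K}=(\cl{G}\times\cl{H})\ltimes P$ and use naturality of $\lambda$, invertibility of $T_2F(P)$, surjectivity of $[T_2a]$ and $[T_2b]$ for cancellation, and Diagram \eqref{eq:transporting-2-metrics}. The differences are organizational: you treat each leg separately and prove both directions (the paper proves one, with $\ol{P_*\eta}\coloneqq\ol{\eta}\circ(T_2F(P))^{-1}$), and you verify directly that $F(\phi)$ and $F(\psi)$ are diffeomorphisms rather than citing Proposition \ref{prop:HS-orbifolds-equivalence}, which is stated only for orbifold groupoids. Your regularity detour can also be dropped, since the metric you produce is the induced metric on the other side composed with $T_2$ of a smooth map, and any such composite is already a weak Riemannian metric.
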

\begin{proof}
 For brevity, we write $\cl{K}$ for the associated action groupoid $(\cl{G} \times \cl{H}) \ltimes P \rra P$. By the discussion at the end of Subsection \ref{sec:riem-metr-stacks} , we have the functors
  \begin{equation*}
   \begin{tikzcd}
     \cl{G} & (\cl{G} \times \cl{H}) \ltimes P & \cl{H},
     \ar["\phi"', from=1-2, to=1-1]
     \ar["\psi", from=1-2, to=1-3]
   \end{tikzcd}
 \end{equation*}
whose base maps are $a$ and $b$. This gives the following commutative diagram (the top triangle coming from Diagram \eqref{eq:transporting-2-metrics}): 
  \begin{equation*}
    \begin{tikzcd}
      & N_2\cl{F}_{\cl{K}} & \\
      N_2\cl{F}_{\cl{G}} & \R & N_2\cl{F}_{\cl{H}}\\
      T_2(M/\cl{G}) & & T_2(N/\cl{H}).
      \ar["{[T_2a]}"', from=1-2, to=2-1]
      \ar["{[T_2b]}", from=1-2, to=2-3]
      \ar["\lambda"', from=2-1, to=3-1]
      \ar["\lambda", from=2-3, to=3-3]
      \ar["T_2F(P)"', from=3-1, to=3-3]
      \ar["\eta^N", from=2-1, to=2-2]
      \ar["\ol{\eta}"', from=3-1, to=2-2]
      \ar["(P_*\eta)^N"', from=2-3, to=2-2]
      \ar["\exists\ \ol{P_*\eta} \ ?", from=3-3, to=2-2]
    \end{tikzcd}
  \end{equation*}
  The outer pentagon commutes by naturality of $\lambda$ and functoriality of $T_2$. The left inner triangle commutes by definition of the induced metric $\ol{\eta}$. The question is whether $\ol{P_*\eta}$ exists. Recall that $P$ is a Morita equivalence, so by Proposition \ref{prop:HS-orbifolds-equivalence}, $F(P) \colon M/\cl{G} \to N/\cl{H}$ is a diffeomorphism, and thus so is $T_2F(P) \colon T_2(M/\cl{G}) \to T_2(N/\cl{H})$. Therefore, we will define
  \begin{equation*}
    \ol{P_*\eta} \coloneqq \ol{\eta} \circ (T_2F(P))^{-1}.
  \end{equation*}
  We must check that the right inner triangle commutes. This is a diagram chase:
  \begin{align*}
    \ol{P_*\eta} \circ \lambda \circ [T_2b] &= \ol{\eta} \circ (T_2F(P))^{-1} \circ \lambda \circ [T_2b]  &&\text{by definition of } \widetilde{P_*\eta}\\
                                                                       &= \ol{\eta} \circ \lambda \circ [T_2a] &&\text{by definition of } F(P) \\
                                                                       &= \eta^N \circ [T_2a] &&\text{by definition of } \ol{\eta}\\
    &= (P_*\eta)^N \circ [T_2b] &&\text{by Diagram \eqref{eq:transporting-2-metrics}},
  \end{align*}
  and we can cancel $[T_2b]$ on both sides because it is surjective. This proves commutativity of the right inner triangle, and thus $P_*\eta$ does indeed induce $\ol{P_*\eta}$. The fact $\ol{P_*\eta}$ is a weak Riemannian metric follows from Lemma \ref{lem:induced-map-is-weak-metric}.
  
\end{proof}

\subsection{Finding induced metrics}
\label{sec:find-induc-metr}

In this last part, we show that if $\eta$ is a 0-metric on a regular and proper Lie groupoid, then it induces a Riemannian metric on the orbit space. Armed with Proposition \ref{prop:lambda-1-iso-suffices}, it suffices to prove that for such Lie groupoids, the map $\lambda_1$ is injective. We accomplish this, in turn, using another sufficient condition.

\begin{proposition}
  \label{prop:sufficient-condition-for-lambda-1-iso}
  Let $\cl{G} \rra M$ be a Lie groupoid. Suppose that, whenever we have a pair of parallel morphisms equalized by the quotient $\pi\colon M\to M/\cl{G}$, i.e.,
  \begin{equation*}
    \begin{tikzcd}
      U & M & M/\cl{G},
      \ar["p", shift left, from=1-1, to=1-2]
      \ar["q"', shift right, from=1-1, to=1-2]
      \ar["\pi", two heads, from=1-2, to=1-3]
    \end{tikzcd}
  \end{equation*}
  the morphisms $(T_2p, T_2q)$ are equalized by $T_2M \to T_2M/T_2\cl{G}$, i.e.,
  \begin{equation*}
    \begin{tikzcd}
      T_2U & T_2M & T_2M/T_2\cl{G}.
      \ar["T_2p", shift left, from=1-1, to=1-2]
      \ar["T_2q"', shift right, from=1-1, to=1-2]
      \ar[two heads, from=1-2, to=1-3]
    \end{tikzcd}
  \end{equation*}
Then the natural map $\lambda_1\colon T_2M/T_2\cl{G} \to T_2(M/\cl{G})$ is a diffeomorphism. 
\end{proposition}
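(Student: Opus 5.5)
Since $\lambda_1$ is already known to be a subduction, it suffices to show that $\lambda_1$ is injective. An injective subduction is a diffeomorphism. So the plan is to describe the fibres of $T_2\pi\colon T_2M \to T_2(M/\cl{G})$ explicitly and check that each fibre lies in a single fibre of the quotient map $Q\colon T_2M \to T_2M/T_2\cl{G}$.

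First, I describe when two elements of $T_2M$ have the same image in $T_2(M/\cl{G})$. By the construction of the left Kan extension recalled in Section \ref{sec:diff-riem-metr}, $T_2(M/\cl{G})$ is the quotient of $\coprod_{p} T_2U_p$, over plots $p$ of $M/\cl{G}$. The identification is the equivalence relation generated by $v \sim T_2f(v)$ whenever $p = p'\circ f$.

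Take $a, b \in T_2M$ with $T_2\pi(a) = T_2\pi(b)$. The elements $a$ and $b$ themselves arise as $T_2\iota_x(\cdot)$ for plots of $M$, and composing with $\pi$ gives plots of $M/\cl{G}$. Hence there is a finite zigzag of plots $p_0, \dots, p_k$ of $M/\cl{G}$, smooth maps $f_i$, and elements $v_i \in T_2U_{p_i}$, connecting a representative of $a$ to a representative of $b$. Each step has the form $v \sim T_2f(v)$ with $p = p'\circ f$.

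Second, I lift each plot in the zigzag locally along $\pi$. Each $p_i$ locally lifts near the relevant base point to a plot $q_i\colon W_i \to M$ with $\pi\circ q_i = p_i|_{W_i}$. All relations are germs at specific points, so after shrinking domains we can assume the lifts exist on the neighbourhoods used. For an elementary step $p = p'\circ f$ with lifts $q$ of $p$ and $q'$ of $p'$, the two maps $q$ and $q'\circ f$ from $W$ to $M$ are equalized by $\pi$. The hypothesis then gives
\[
Q\circ T_2q = Q\circ T_2(q'\circ f) = Q\circ T_2q' \circ T_2f.
\]
Thus $Q(T_2q(v)) = Q(T_2q'(T_2f(v)))$.

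Third, I handle the dependence on the choice of lift. Any two lifts $q$ and $\tilde q$ of the same plot are equalized by $\pi$, so by the hypothesis they give the same value after applying $Q\circ T_2$. In particular, the endpoints of the chain are compatible with $a$ and $b$: we may take the lift of $\pi\circ\iota$ to be $\iota$ itself, so the chain starts at $Q(a)$ and ends at $Q(b)$. Chaining the equalities along the zigzag yields $Q(a) = Q(b)$. This proves that $\lambda_1$ is injective.

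The main obstacle is the bookkeeping in the first step. One must make precise that the fibres of $T_2\pi$ are generated by elementary relations involving plots and germs, and that the local lifts can be chosen around the relevant points. I would handle this by working with pointed plots $(U,r)$ and noting that $T_2$ of a map only depends on its germ at $r$. It then suffices to lift on arbitrarily small neighbourhoods of the base points, and the sheaf/pre-sheaf properties let us restrict plots freely.
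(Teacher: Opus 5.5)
Your proposal is correct and follows essentially the same route as the paper. Both reduce to injectivity of the subduction $\lambda_1$, lift the plots in each elementary relation $p = p'\circ f$ locally along $\pi$, apply the hypothesis to the pair $q$, $q'\circ f$, and then specialize to $\pi$ precomposed with charts; your explicit checks that $Q(T_2q(v))$ does not depend on the chosen lift, and that these values propagate along the zigzag, spell out what the paper compresses into ``it follows that''.
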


\begin{remark}
  \label{rem:simpler-to-write-condition}
  To avoid cumbersome notation, we write $M \fiber{}{\cl{G}} M$ for the fibre product $M \fiber{}{M/\cl{G}} M$. We can equivalently state Proposition \ref{prop:sufficient-condition-for-lambda-1-iso} as: if $(p,q)\colon U \to M\fiber{}{\cl{G}} M$ is a plot, then so is $(T_2p,T_2q)\colon T_2U \to T_2M \fiber{}{T_2\cl{G}} T_2M$.
\end{remark}

\begin{proof}
  Consider any commutative diagram of plots
  \begin{equation*}
    \begin{tikzcd}
      U & & U' \\
      & M/\cl{G}, &
      \ar["f", from=1-1, to=1-3]
      \ar["p"', from=1-1, to=2-2]
      \ar["q", from=1-3, to=2-2]
    \end{tikzcd}
  \end{equation*}
  and take $v,w \in T_rU$ and $v' \coloneqq Tf(v)$ and $w' \coloneqq Tf(w)$ in $T_{f(r)}U'$. Lift $p$ along the quotient $\pi$ locally near $r$ to some map $\ti{p}$, and lift $q$ along the quotient $\pi$ locally near $f(r)$ to some map $\ti{q}$. Set $W \coloneqq \dom(p) \cap f^{-1}(\dom(\ti{q}))$. Then we have parallel morphisms
  \begin{equation*}
    \begin{tikzcd}
      W & M & M/\cl{G},
      \ar["\ti{p}", shift left, from=1-1, to=1-2]
      \ar["\ti{q} \circ f"', shift right, from=1-1, to=1-2]
      \ar["\pi", two heads, from=1-2, to=1-3]
    \end{tikzcd}
  \end{equation*}
  so by our assumption, $T_2\ti{p}$ and $T_2(\ti{q} \circ f)$ project to the same map in $T_2M/T_2\cl{G}$. In particular, $T_2\ti{p}(v,w)$ and $T_2\ti{q}(v',w')$ are in the same $T_2\cl{G}$-orbit.

It follows that if $p\colon U \to M/\cl{G}$ and $q \colon U' \to M/\cl{G}$ are any two plots, and $T_2p(v,w) = T_2q(v',w')$, then for any lifts $\ti{p}$ and $\ti{q}$ of $p$ and $q$ along $\pi$, respectively, both $T_2\ti{p}(v,w)$ and $T_2\ti{q}(v',w')$ are in the same $T_2\cl{G}$-orbit. Therefore, if we take $p = q = \pi\colon M \to M/\cl{G}$ (strictly speaking, we should pre-compose $\pi$ with a manifold chart) we find that if $T_2\pi(v,w) = T_2\pi(v',w')$, then $(v,w)$ and $(v',w')$ are in the same $T_2\cl{G}$-orbit. But this is exactly the statement that $\lambda_1$ is injective. Since $\lambda_1$ is also a subduction, the proof is complete.
\end{proof}

In order to show that proper regular Lie groupoids satisfy the hypothesis of Proposition \ref{prop:sufficient-condition-for-lambda-1-iso}, we need some machinery from the theory of proper Lie groupoids. Let $\cl{G} \rra M$ be a Lie groupoid and take an orbit $O$. The pre-image $\cl{G}_O \coloneqq s^{-1}(O) \cap t^{-1}(O)$ is an embedded submanifold of $\cl{G}$, and $\cl{G}_O \rra O$ is a Lie sub-groupoid of $\cl{G}$. The \define{linear model} of $\cl{G}$ at $O$ is the groupoid $N\cl{G}_O \rra NO$, where $NO$ is the normal bundle of $O$ relative to $M$, $N\cl{G}_O$ is the normal bundle of $\cl{G}_O$ relative to $\cl{G}$, and the structure maps are induced by those of $T\cl{G} \rra TM$. Not every Lie groupoid is locally isomorphic to its linear model.
\begin{definition}
  A Lie groupoid $\cl{G} \rra M$ is \define{linearizable} if, for every orbit $O$, there is an open neighbourhood $U$ of $O$ in $M$, and a neighbourhood $V$ of $O$ in $NO$, and an isomorphism of groupoids $\cl{G}_U \cong (N\cl{G}_O)_V$.
\end{definition}
We do not ask that the neighbourhoods $U$ nor $V$ are saturated. By \cite{CrainStr13}, proper Lie groupoids are linearizable. In fact, proper Lie groupoids admit a finer local model (e.g., see \cite[Corollary 3.11]{PflPosTan14})
\begin{lemma}
  \label{lem:linear-model}
  Let $\cl{G} \rra M$ be a proper Lie groupoid, and let $x \in M$. Then there is a neighbourhood $U$ of $x$ in $M$, a neighbourhood $A$ of $x$ in $O$ (the orbit through $x$), a $G_x$-invariant neighbourhood $W$ of the origin in $N_x$, and an isomorphism of groupoids
  \begin{equation*}
   \Phi \colon \cl{G}_U \to  \operatorname{Pair}(A) \times G_x \ltimes W.
  \end{equation*}
\end{lemma}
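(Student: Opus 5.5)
The plan is to derive Lemma \ref{lem:linear-model} from the linearization theorem for proper Lie groupoids \cite{CrainStr13}, and then trivialize the linear model near a point. Here the linear model is $N\cl{G}_O \rra NO$.

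First, apply linearizability at the orbit $O = O_x$. This gives a neighbourhood $U_0$ of $O$ in $M$, a neighbourhood $V$ of $O$ in $NO$, and an isomorphism $\cl{G}_{U_0} \cong (N\cl{G}_O)_V$. Next, describe the linear model globally over $O$. Since $t\colon s^{-1}(x) \to O$ is a principal $G_x$-bundle, the restricted groupoid $\cl{G}_O \rra O$ is isomorphic to the gauge groupoid $(s^{-1}(x) \times s^{-1}(x))/G_x$. In the same way, $N\cl{G}_O \rra NO$ is isomorphic to the action groupoid of the gauge groupoid on the associated bundle $NO \cong s^{-1}(x) \times_{G_x} N_x$. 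Here $G_x$ acts on $N_x = N_xO$ by the linearized isotropy representation, which is the action of Section \ref{sec:riem-metr-stacks} restricted to $G_x$.

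The key step is to localize along $O$. Choose a neighbourhood $A$ of $x$ in $O$ that is small enough that $s^{-1}(x)|_A \to A$ has a local section $\sigma$ with $\sigma(x)=1_x$. This section trivializes $s^{-1}(x)|_A \cong A \times G_x$, and hence $NO|_A \cong A \times N_x$. Under this trivialization the restriction of the gauge groupoid to $A$ becomes $\operatorname{Pair}(A)\times G_x$. The restriction of the action groupoid to $NO|_A$ becomes $\operatorname{Pair}(A) \times (G_x \ltimes N_x)$. The arrow from $(a,v)$ to $(b, \gamma v)$ is given by the triple $(b,a,\gamma)$, and composition of such triples is componentwise. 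Checking this is a routine computation in coordinates $[p,q]$ on the gauge groupoid.

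Finally, shrink to $G_x$-invariant data. Because $G_x$ is compact, since $\cl{G}$ is proper, we can average an inner product on $N_x$ so that it becomes $G_x$-invariant. We then take $W$ to be a small ball for this inner product, chosen small enough and with $A$ shrunk enough that $A \times W \subseteq V$ in the trivialization. Restricting the groupoid to a subset of the base commutes with the isomorphisms above. So we set $U$ to be the preimage of $A\times W$ under the linearization, which is an open neighbourhood of $x$ in $M$. This yields $\Phi\colon \cl{G}_U \to \operatorname{Pair}(A)\times G_x\ltimes W$.

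I expect the main obstacle to be the middle step: identifying $N\cl{G}_O$ with the associated-bundle action groupoid. This needs a careful description of the normal bundle of $\cl{G}_O$ in $\cl{G}$ via $s$ and $t$. If this gets heavy, I would instead cite \cite[Corollary 3.11]{PflPosTan14} directly, since it states exactly this local form.
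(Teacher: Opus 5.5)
Your argument is correct and follows the paper's route. The paper gives no proof of its own: it only notes that proper groupoids are linearizable by \cite{CrainStr13} and cites \cite[Corollary 3.11]{PflPosTan14}. Your steps are the standard way to pass from linearizability to that finer model: linearize at $O$; identify $N\cl{G}_O$ with $\cl{G}_O \ltimes NO$ via $[\xi]\mapsto (g,[Ts(\xi)])$; trivialize over $A$ using a section $\sigma$ of $t\colon s^{-1}(x)\to O$ with $\sigma(x)=1_x$; and cut down to an invariant ball $W$, whose invariance is exactly what makes the restriction of $\operatorname{Pair}(A)\times G_x\ltimes N_x$ to $A\times W$ equal to $\operatorname{Pair}(A)\times G_x\ltimes W$. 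One point should be made explicit: the linearization of \cite{CrainStr13} restricts to the identity on $\cl{G}_O$, so $x$ corresponds to $0_x$ and your $U$ does contain $x$.
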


We also need the \define{canonical stratification} of a proper Lie groupoid $\cl{G} \rra M$. We begin with the category $\cat{Rep}$, whose objects are smooth representations $H\circlearrowright V$ of Lie groups $H$ on finite-dimensional vector spaces $V$, and whose morphisms $H\circlearrowright V \to H'\circlearrowright V'$ are pairs $(\alpha, \lambda)$, where $\alpha \in \underline{\cat{Lie}}(H,H')$ and $\lambda \in \underline{\cat{Vect}}(V,V')$ and $\lambda(h \cdot v) = \alpha(h) \cdot \lambda(v)$.

We declare
\begin{equation*}
  x \sim y \quad \text{if} \quad G_x \circlearrowright N_x \underset{\cat{Rep}}{\cong} G_y\circlearrowright N_y.
\end{equation*}
This is an equivalence relation on $M$. We denote by $M_x^{\cong}$ the connected component of the equivalence class that contains $x$. The collection $\{M_x^{\cong}\mid x \in M\}$ the canonical stratification of $M$. This is a Whitney stratification, which implies several properties that we will not fully explain here; see \cite[Proposition 26]{CrainMes18} for full details.
\begin{proposition}
  Let $\cl{G} \rra M$ be a proper Lie groupoid. The following hold:
  \begin{itemize}
  \item Each stratum $M_x^{\cong}$ is an embedded submanifold of $M$.
  \item The collection of strata $\{M_x^{\cong}\}$ is locally finite.
  \item The closure of a stratum is a union of the strata that meet it (this is the frontier condition).
  \item The collection of strata $\{M_x^{\cong}\}$ satisfies Whitney's conditions (A) and (B).
  \end{itemize}
\end{proposition}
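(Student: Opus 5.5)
The plan is to reduce every assertion to a statement about a linear action of a compact group, using the local model of Lemma \ref{lem:linear-model}. Fix $x \in M$. That lemma gives a neighbourhood $U \cong A \times W$ of $x$ and an isomorphism $\cl{G}_U \cong \operatorname{Pair}(A) \times (G_x \ltimes W)$. Here $K \coloneqq G_x$ is compact because $\cl{G}$ is proper, and it acts linearly on $V \coloneqq N_x$. All four properties in the statement are local, and they are preserved under taking a product with the manifold $A$ equipped with the pair groupoid. So it suffices to understand the relation $\sim$ on the slice $W \subseteq V$ for the action groupoid $K \ltimes W$.

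\textbf{Step 1: identify $\sim$ with orbit type.} For $w \in W$, the isotropy group is the stabilizer $K_w$. The normal representation $N_w$ is $V / T_w(K\cdot w)$ with its induced $K_w$-action. By the slice theorem for $K$ acting on $V$, this is isomorphic to $K_w$ acting on a $K_w$-invariant complement of $T_w(K\cdot w)$.

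I claim that, after shrinking $W$, a point $w$ satisfies $w \sim 0$ exactly when $K_w = K$, that is, when $w \in V^K$. One direction is clear: if $K_w = K$, then $K\cdot w = \{w\}$ and $N_w = V$ with the same action. For the other direction, suppose $K_w \circlearrowright N_w \cong K \circlearrowright V$ in $\cat{Rep}$. The morphism $(\alpha, \lambda)$ in $\cat{Rep}$ gives an isomorphism $\alpha\colon K_w \to K$, and $\lambda$ forces $\dim N_w = \dim V$. Now $K_w \subseteq K$ is a closed subgroup of a compact Lie group that is abstractly isomorphic to $K$. It therefore has the same dimension and the same number of components, and so $K_w = K$.

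More generally, the same argument shows that on $W$ the classes of $\sim$ are unions of $K$-orbit-type strata $W_{(H)} = \{w \mid K_w \text{ is conjugate to } H\}$. Points of $W_{(H)}$ have isomorphic normal representations, since by $K$-equivariance the normal representation at $k\cdot w$ is conjugate to the one at $w$. Conversely, isomorphic normal representations at points of a small slice force the stabilizers to be conjugate; to see this I would apply the previous argument inside the slice at a point of $W_{(H)}$ and induct.

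\textbf{Step 2: the four properties.} Through Step 1, the stratum $M^{\cong}_x$ near $x$ becomes $A \times (V^K \cap W)$, which is an embedded submanifold. Its connected component is taken globally, but embeddedness is local, and the equivalence class is locally closed near $x$ by this description. For local finiteness, a linear action of a compact group has only finitely many orbit types near $0$. This follows by induction on $\dim V$ via slices, or from the existence of finitely many conjugacy classes of stabilizers for compact linear actions (Mostow). The frontier condition and Whitney (A) and (B) then reduce to the corresponding properties of the orbit-type stratification of $V$ under $K$. These properties are standard: see Duistermaat--Kolk, or Pflaum's book on stratified spaces. They are transported through the product with $A$ and the diffeomorphism $\Phi$.

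\textbf{Main obstacle.} I expect the main obstacle to be Step 1 in its general form, namely showing that isomorphism type of $(K_w, N_w)$ agrees with orbit type, together with the Whitney (B) condition. For Whitney (B) I would try first to cite the known result for orbit-type stratifications of compact linear actions. I would prove this only if necessary, by using Hilbert's invariant map to realize the orbit types as preimages of semialgebraic strata. Alternatively, I would simply invoke \cite[Proposition 26]{CrainMes18}, which contains exactly this statement.
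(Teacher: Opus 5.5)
The paper gives no argument of its own here; it cites \cite[Proposition 26]{CrainMes18}. Your route instead makes the local mechanism explicit, and the plan works once two statements in Step 1 are corrected (see the next paragraph). You use Lemma \ref{lem:linear-model} to reduce to $\operatorname{Pair}(A)\times(K\ltimes W)$ with $K=G_x$ compact. You identify the stratum through $x$ near $x$ with $A\times(V^K\cap W)$, using the fact that a closed subgroup of a compact Lie group isomorphic to the whole group is the whole group; the paper uses the same fact in the proof of Lemma \ref{lem:anchor-subduction-when-single-stratum}. You then import local finiteness, the frontier condition and Whitney (A)/(B) from the orbit-type stratification of the compact linear action $K\circlearrowright N_x$. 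This gives a transparent proof of embeddedness and explains why Morita-type strata are locally orbit-type strata. For Whitney (B) you still rely on the classical literature or the same reference, so both routes ultimately rest on the same input. When you cite, keep two points in mind. First, local finiteness of the \emph{connected} strata also needs each orbit-type set to have only finitely many components near $0$; this is part of the classical result but does not follow from finiteness of orbit types alone. Second, the frontier condition is not purely local. One shows $S\cap\overline{S'}$ is open and closed in the connected stratum $S$, with openness coming from the local model.

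One assertion in Step 1 is false as stated: isomorphic normal representations at points of a small slice need not have conjugate stabilizers. Take $K=\Z_2\times\Z_2=\{1,a,b,ab\}$ acting on $\R^2$ by $a(u,v)=(u,-v)$ and $b(u,v)=(-u,v)$. For $t\neq 0$, the points $(t,0)$ and $(0,t)$ have the non-conjugate stabilizers $\{1,a\}$ and $\{1,b\}$. Since the orbits are discrete, their normal representations live on all of $\R^2$, and they are isomorphic in $\cat{Rep}$ via $a\mapsto b$ and the swap $(u,v)\mapsto(v,u)$. By scaling, such pairs occur in every neighbourhood of $0$.

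What Step 2 actually needs is the local version, and your phrase ``apply the previous argument inside the slice at a point'' does prove it. Suppose $K_w=H$, and use a tube $K\times_H S$ around $K\cdot w$. A point $[k,s]$ in the tube has stabilizer $kH_sk^{-1}$ with $H_s\subseteq H$. So $[k,s]\sim w$ forces $H_s\cong H$, and hence $H_s=H$. Consequently each $\sim$-class agrees, near each of its points, with the orbit-type set through that point. The orbit-type pieces of a class are then relatively open and closed in it, so the connected components of $\sim$-classes are exactly the connected components of orbit-type sets.

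Similarly, your justification ``by $K$-equivariance'' only compares points in one $K$-orbit. For two points in different orbits with the same stabilizer $H$, use that $N_w$ is determined by $V|_H\cong\mathfrak{k}/\mathfrak{h}\oplus N_w$ as $H$-representations. Alternatively, use the slice theorem, which shows the normal representation is locally constant along each component of $W_{(H)}$; that is all the stratification requires.
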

We now continue with our goal.
\begin{lemma}
  \label{lem:anchor-subduction-when-single-stratum}
    Let $\cl{G} \rra M$ be a proper Lie groupoid whose canonical stratification consists of a single stratum. Then the anchor map $(t,s) \colon G \to M \times M$ is a subduction onto its image $M \fiber{}{\cl{G}} M$.
  \end{lemma}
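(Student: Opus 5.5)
The statement is local, so we reduce to the local model. Being a subduction onto the image means the following. Let $(p,q)\colon U \to M \fiber{}{\cl{G}} M$ be a plot, i.e.\ smooth maps $p,q\colon U\to M$ with $\pi\circ p = \pi\circ q$. Then near each $r\in U$ there must be a smooth $\sigma\colon (U,r)\todash \cl{G}$ with $s\circ\sigma = q$ and $t\circ \sigma = p$. So fix $r$, set $x\coloneqq q(r)$, $y\coloneqq p(r)$, and choose an arrow $g\colon x\to y$. Translating by a local bisection through $g$ (which exists for any Lie groupoid and acts by diffeomorphisms on $M$ preserving orbits), we may replace $p$ by $\beta\circ p$ with $\beta(y)=x$. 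It then suffices to lift $(\beta\circ p, q)$ and compose the lift with the bisection. Thus we may assume $p(r)=q(r)=x$. Shrinking $U$, both $p$ and $q$ land in the neighbourhood $U_x$ of Lemma \ref{lem:linear-model}, and we may work in $\operatorname{Pair}(A)\times G_x\ltimes W$.

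In the local model, write $p=(a_1,w_1)$ and $q=(a_2,w_2)$ with $a_i\colon U\to A$ and $w_i\colon U\to W$ smooth. A lift has the form $(a_1,a_2,h)$ with $h\colon U\to G_x$ smooth and $h\cdot w_2 = w_1$. The pair component is automatic, so everything reduces to lifting the pair $(w_1,w_2)$ along the anchor of $G_x\ltimes W$. Two points of $U_x$ lie in the same $\cl{G}$-orbit exactly when their $W$-components are $G_x$-related, so $\pi\circ p=\pi\circ q$ gives $w_1(r')\in G_x\cdot w_2(r')$ for all $r'$.

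Now use the single-stratum hypothesis. Every point of $W$ lies in the same stratum as $0$, so its isotropy $G_{x,w}$, acting on its normal space, is isomorphic as a representation to $G_x\circlearrowright N_x$. By dimension count and compactness of $G_x$ (properness), $G_{x,w}=G_x$. Hence $G_x$ fixes every $w$ near $0$, i.e.\ the action of $G_x$ on $W$ is trivial. This uses that $W$ is small and connected to $0$ within the stratum, possibly after shrinking $W$. Concretely, $W^{G_x}$ is the germ of the stratum through $0$ in the slice, and the stratum being open forces $W^{G_x}=W$. Consequently $w_1=w_2$ identically, and we may take $h\equiv e$. The local lift is then $r'\mapsto \Phi^{-1}(a_1(r'),a_2(r'),e)$, which is smooth. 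Composing back with the bisection gives the desired local lift of $(p,q)$, so $(t,s)$ is a subduction onto $M\fiber{}{\cl{G}}M$.

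The main obstacle is the third step: showing rigorously that a single stratum forces the linearized isotropy action on $W$ to be trivial. The approach is to compare isotropy groups. For $w\in W$, the isotropy in $G_x\ltimes W$ is the stabilizer $(G_x)_w\subseteq G_x$, and the stratum condition gives an abstract isomorphism $(G_x)_w\cong G_x$ of compact Lie groups. A closed subgroup of a compact Lie group that is isomorphic to the whole group has the same dimension and the same number of components, so it is the whole group. Hence every $w$ is fixed. A secondary point to check is that lifting via the bisection and the chart $\Phi$ is compatible with the sheaf condition for plots, but this is routine.
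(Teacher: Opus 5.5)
Your proposal is correct and follows essentially the same route as the paper: you reduce to $p(r)=q(r)$ by a local bisection, pass to the local model $\operatorname{Pair}(A)\times G_x\ltimes W$ of Lemma~\ref{lem:linear-model}, use the single-stratum hypothesis to force every stabilizer $(G_x)_w$ to equal $G_x$ (so the action on $W$ is trivial and $w_1=w_2$), and then lift using the unit of $G_x$. Your dimension-and-components argument, showing that a closed subgroup of the compact group $G_x$ isomorphic to $G_x$ must be all of $G_x$, fills in a step the paper only asserts.
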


  \begin{proof}
    Let $(p,q) \colon U \to M\fiber{}{\cl{G}} M$ be a plot, and fix $r \in U$. We seek a plot $g \colon U \dashrightarrow \cl{G}$ defined near $r$ that lifts $(p,q)$ along $(t,s)$, i.e., such that $t\circ g = p$ and $s\circ g = q$. Without loss of generality, we may assume that $p(r) = q(r)$. Indeed, if not, then choose a bisection $\sigma \colon M \dashrightarrow \cl{G}$ such that $\sigma(q(r))\colon q(r) \mapsto p(r)$. Then given some lift $\ti{g}$ of $(p, t \circ \sigma \circ q)$, we get a lift of $(p,q)$ by taking $g \coloneqq \ti{g} \cdot (\sigma \circ q)$.

Set $x \coloneqq p(r) = q(r)$, and denote $H \coloneqq G_x$.  Linearizing with Lemma \ref{lem:linear-model}, take a neighbourhood $U$ of $x$, a neighbourhood $A$ of the orbit $O$ through $x$, a $H$-invariant neighbourhood $W$ of $N_x$, and an isomorphism $\Phi\colon G_U \to \operatorname{Pair}(A) \times H \rtimes W$. In the model $\operatorname{Pair}(A) \times H \rtimes W$, observe that the normal representation at $(y,v)$ is isomorphic to the representation $H_v \circlearrowright T_vW/T_v(H\cdot v)$, where $H_v$ is the stabilizer of $v \in W$. Since these representations are all isomorphic by assumption, each $H_v$ is in particular a subgroup of $H$ that is isomorphic to $H$. It follows that $H_v = H$ for all $v$, and hence the $H$-action on $W$ is trivial.

    Write $\Phi \circ p \coloneqq (p_1,p_2)$ and $\Phi \circ q \coloneqq (q_1,q_2)$. In the local model, the condition that $p$ and $q$ descend to the same map in $M/\cl{G}$ translates to $(p_2,q_2) \colon U \dashrightarrow W \fiber{}{G_x} W$. But since the $G_x$-action on $W$ is trivial, this means $p_2 = q_2$. Therefore the map
    \begin{equation*}
     \ti{g} \colon U \dashrightarrow \operatorname{Pair}(A) \times G_x \rtimes W, \quad r' \mapsto (p_1(r'), q_1(r'), 1_x, p_2(r'))
    \end{equation*}
    is a lift of $(\Phi \circ p, \Phi \circ q)$ along $(t,s)$. Then $g \coloneqq \Phi^{-1}\circ \ti{g}$ is the desired lift of $(p,q)$ along $(t,s)$.

  \end{proof}

  \begin{proposition}
    \label{prop:lambda-1-is-iso-for-proper-regular}
    Let $\cl{G} \rra M$ be a proper regular Lie groupoid. Then the natural map $\lambda_1\colon T_2M/T_2\cl{G} \to T_2(M/\cl{G})$ is an isomorphism. 
  \end{proposition}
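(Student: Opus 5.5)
We will verify the hypothesis of Proposition \ref{prop:sufficient-condition-for-lambda-1-iso}, in the form of Remark \ref{rem:simpler-to-write-condition}. Given a plot $(p,q)\colon U \to M\fiber{}{\cl{G}} M$, we must show that $(T_2p,T_2q)$ lands in $T_2M\fiber{}{T_2\cl{G}}T_2M$. In other words, for each $r\in U$ and $v,w\in T_rU$, the vectors $T_2p(v,w)$ and $T_2q(v,w)$ must lie in the same $T_2\cl{G}$-orbit. The natural route is to lift $(p,q)$ locally along the anchor. If near $r$ there is a smooth $g\colon U \todash \cl{G}$ with $t\circ g=p$ and $s\circ g=q$, then $T_2g(v,w)\in T_2\cl{G}$ is an arrow from $T_2q(v,w)$ to $T_2p(v,w)$, and we are done. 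So the whole proposition reduces to showing that the anchor $(t,s)\colon\cl{G}\to M\fiber{}{\cl{G}}M$ is a subduction. Lemma \ref{lem:anchor-subduction-when-single-stratum} gives exactly this when the canonical stratification has a single stratum.

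The key step is therefore to show that a proper regular Lie groupoid, at least locally, has a single canonical stratum. Since the condition to be verified is local in $r$, it suffices to work in the local model of Lemma \ref{lem:linear-model} around $x=p(r)$, after first using a bisection as in the proof of Lemma \ref{lem:anchor-subduction-when-single-stratum} to arrange $p(r)=q(r)$. In the model $\operatorname{Pair}(A)\times G_x\ltimes W$, the orbit through $(y,u)$ has dimension $\dim A+\dim(G_x\cdot u)$. Regularity forces $\dim(G_x\cdot u)=0$ for all $u\in W$, since the orbit through $u=0$ is just $\{0\}$. Because $G_x$ is compact (by properness), the orbits $G_x\cdot u$ are then finite. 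Consequently the identity component $G_x^0$ acts trivially on $W$ (assuming $W$ is connected, e.g.\ a ball).

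This does not yet make the $G_x$-action trivial, because the finite group $G_x/G_x^0$ may act nontrivially; think of $\R\rtimes\Z_2$. So Lemma \ref{lem:anchor-subduction-when-single-stratum} does not apply verbatim. Instead I would argue directly in the model, reducing to the étale-like quotient. The condition $\pi\circ p=\pi\circ q$ says that $p_2(r')$ and $q_2(r')$ lie in the same orbit of the finite group $\Gamma\coloneqq G_x/G_x^0$ acting on $W$, for every $r'$. Plots of the relation $W\fiber{}{\Gamma}W$ need not lift smoothly through $\Gamma$: where the stabilizers jump, the element $\gamma$ with $p_2=\gamma q_2$ may change.

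We only need equality in $T_2W/T_2(W\rtimes\Gamma)$ at the single point $r$, though, so I would use a pointwise argument. The sets $U_\gamma\coloneqq\{r' : p_2(r')=\gamma q_2(r')\}$ are closed and cover $U$, and $\Gamma$ is finite. Hence by a Baire argument, at least one $U_\gamma$ has interior accumulating at $r$. Then use that $\gamma$ is linear. On the interior of $U_\gamma$ we have $T_2p_2=T_2\gamma\circ T_2q_2$, and by continuity of derivatives this identity persists on the closure of the interior. For $r$ lying in the closure of $\bigcup_\gamma\operatorname{int}U_\gamma$, which is dense, we get $T_2p_2(v,w)=T_2\gamma(T_2q_2(v,w))$ for some $\gamma$. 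Finiteness of $\Gamma$ lets us pick a single $\gamma$ along a sequence converging to $r$. The $\operatorname{Pair}(A)$ and $G_x^0$ factors then supply the arrow in $T_2\cl{G}$.

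The main obstacle is making the conclusion hold at every $r$, not just on a dense set. I expect to handle this by noting that $U\setminus\bigcup_\gamma\operatorname{int}U_\gamma$ is a closed set with empty interior, and that near any point the derivatives of $p_2$ and $q_2$ agree up to some $\gamma$ by the limiting argument above, since each point is a limit of interior points of some single $U_\gamma$. Once this step is secured, Proposition \ref{prop:sufficient-condition-for-lambda-1-iso} gives that $\lambda_1$ is a diffeomorphism.
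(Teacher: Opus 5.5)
Your argument is correct, and it takes a genuinely different route from the paper's. The paper keeps working with the whole plot near $r$. It uses local finiteness of the canonical stratification so that only finitely many strata $S_i$ are met. It shows, following \cite{BarWatZieg26}, that the sets $W_i\cap(\overline{W_i})^\circ$, with $W_i=p^{-1}(S_i)=q^{-1}(S_i)$, are open with dense union. On each of these it applies Lemma \ref{lem:anchor-subduction-when-single-stratum} to the single-stratum groupoid $\cl{G}_{S_i}\rra S_i$, which gives honest smooth lifts of $(p,q)$ along the anchor there. Finally it passes to closures, using that the image of $(T_2t,T_2s)$ is closed in $T_2M\times T_2M$; this is where properness and the constant rank of the anchor are used.

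You instead argue at the single point $r$ in the linear model. Regularity forces $G_x^0$ to act trivially on the slice $W$, so the orbit relation there is that of the finite group $\Gamma=G_x/G_x^0$. The finitely many closed sets $U_\gamma$ cover a neighbourhood of $r$, so their interiors are dense and $r\in\overline{\operatorname{int}U_\gamma}$ for some $\gamma$. Continuity of first derivatives then gives $T_2p_2=T_2\gamma\circ T_2q_2$ at $r$. The required arrow of $T_2\cl{G}$ is $T_2\ti{g}(v,w)$, where $\ti{g}(r')=(p_1(r'),q_1(r'),h_\gamma,q_2(r'))$ and $h_\gamma\in G_x$ represents $\gamma$. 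Both proofs rest on the same point-set fact about finite closed covers; what differs is what gets covered: strata in the paper, group elements in yours.

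Your route is more elementary: it needs no stratification theory, no frontier condition, and no closedness of the $T_2\cl{G}$-orbit relation. It also shows exactly where regularity enters. For $S^1$ acting on $\R^2$ the identity component acts nontrivially, the $U_\gamma$ are indexed by a continuum, and the density step collapses. The paper's route gives more, namely genuine smooth local lifts along the anchor on an open dense set, via a lemma of independent interest.

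Three points for the write-up.
\begin{enumerate}
\item What you call the main obstacle is already handled. Since $\Gamma$ is finite, $\overline{\bigcup_\gamma\operatorname{int}U_\gamma}=\bigcup_\gamma\overline{\operatorname{int}U_\gamma}$ is the whole neighbourhood of $r$, and that is all the argument needs.
\item The opening reductions to the anchor being a subduction, or to a single stratum, are false leads and should be cut. For $\R\rtimes\Z_2$ the anchor is not a subduction onto its image.
\item As in the paper's proof of Lemma \ref{lem:anchor-subduction-when-single-stratum}, take the chart to be the restriction of an invariant linearization, which is available for proper groupoids. Then $\pi\circ p=\pi\circ q$ genuinely translates into $p_2(r')\in G_x\cdot q_2(r')$, not merely into $p(r')$ and $q(r')$ lying in the same $\cl{G}$-orbit.
\end{enumerate}
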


  \begin{proof}

    By Proposition \ref{prop:sufficient-condition-for-lambda-1-iso} and Remark \ref{rem:simpler-to-write-condition}, it suffices to show that whenever $(p,q) \colon U \to  M\fiber{}{\cl{G}} M$ is a plot, so is $(T_2p,T_2q) \colon U \to T_2M \fiber{}{T_2\cl{G}} T_2M$. So assume we have a plot $(p,q)\colon U \to M\fiber{}{\cl{G}} M$. Then $(T_2p,T_2q) \colon T_2U \to T_2M \times T_2M$ is smooth, so we must show that its image is contained in $T_2M \fiber{}{T_2\cl{G}} T_2M$.

If we found some local lift $g \colon U \todash \cl{G}$  of $(p,q)$ along the anchor $(t,s)$, then
    \begin{equation*}
(T_2p,T_2q)(T_2\dom(g)) = (T_2t,T_2s)(T_2g(T_2U)) \subseteq T_2M \fiber{}{T_2\cl{G}} T_2M.
    \end{equation*}
Observe that if $(t,s)$ is a subduction, the domains of all possible local lifts $g\colon U \todash \cl{G}$ cover $U$, and we may conclude that $(T_2p,T_2q)(T_2U) \subseteq T_2M\fiber{}{T_2\cl{G}} T_2M$. By Lemma \ref{lem:anchor-subduction-when-single-stratum}, this is the case if $\cl{G}$ has a single stratum in its canonical stratification.

For the general case, we can work locally: letting $r \in U$, we seek a neighbourhood $W$ of $r$ such that $(T_2p,T_2q)(T_2W) \subseteq T_2M\fiber{}{T_2\cl{G}} T_2M$. Without loss of generality we may assume that $p(r) = q(r)$. Indeed, if not, take a bisection $\sigma \colon M \dashrightarrow \cl{G}$ such that $t \circ \sigma(q(r)) = p(r)$. Then, supposing that we have the desired containment for the pair of plots $(T_2p, T_2(t\circ \sigma \circ q))$, we observe that
\begin{equation*}
  T_2M \fiber{}{T_2\cl{G}} T_2M  \supseteq (T_2p, T_2(t\circ \sigma\circ q))(T_2W) = (1 \times T_2 (t\circ \sigma))(T_2p, T_2q)(T_2W),
\end{equation*}
and so the pre-image of $T_2M \fiber{}{T_2\cl{G}} T_2M$ under $1 \times T_2(t\circ \sigma)$ contains $(T_2p,T_2q)(T_2W)$. But the inverse map $(1 \times T_2(t\circ \sigma))^{-1}$ preserves $T_2M \fiber{}{T_2\cl{G}} T_2M$, so we have the desired containment for the pair of plots $(T_2p,T_2q)$.

    Set $x = p(r) = q(r)$. Since the canonical stratification is locally finite, we may take a neighbourhood $W$ of $r$ such that $p$ and $q$ meet only finitely many strata, say $S_1,\ldots, S_k$. The strata are saturated by the orbits, and $p$ and $q$ map into the same orbits, so setting $W_i \coloneqq p^{-1}(S_i)$, we also have $W_i = q^{-1}(S_i)$. The $W_i$ need not be open nor closed subsets of $W$. Instead, we will work with the sets
    \begin{equation*}
      V_i \coloneqq W_i \cap (\overline{W_i})^\circ.
    \end{equation*}
    By \cite[Theorem 8.1, proof]{BarWatZieg26}, the sets $V_i$ are open in $W$, and their union is dense in $W$.  Now, each groupoid $G_{S_i} \rra S_i$ is a proper Lie groupoid with a single stratum in its canonical stratification, so by our observation at the beginning of this proof applied to the pair of restricted plots $(p,q) \colon V_i^\circ  \to S_i \fiber{}{\cl{G}_{S_i}} S_i$, we have
    \begin{equation*}
      (T_2p,T_2q)(T_2V_i^\circ) \subseteq T_2S_i \fiber{}{T_2\cl{G}_{S_i}} T_2S_i \subseteq T_2M \fiber{}{T_2\cl{G}} T_2M.
    \end{equation*}
    Repeating this observation for each $i$ yields
    \begin{equation*}
      (T_2p,T_2q)\left(\bigcup T_2V_i^\circ \right) \subseteq T_2M \fiber{}{T_2\cl{G}} T_2M.
    \end{equation*}
    Now pass to the closure of these sets in $T_2M \times T_2M$. The right side is already closed because it is the image of the map $(T_2t, T_2s)\colon T_2\cl{G} \to T_2M \times T_2M$, and by definition of regular and proper, the anchor $(t,s)$ is a proper map of constant rank. On the left side, we have
    \begin{equation*}
      \overline{(T_2p,T_2q)\left(\bigcup T_2V_i^\circ \right)} = \overline{(T_2p,T_2q)\left(\overline{\bigcup T_2V_i^\circ} \right)} = \overline{(T_2p,T_2q)\left(T_2W \right)},
    \end{equation*}
    where the last equality follows from the density of $\bigcup V_i$ in $W$. This concludes the proof.
  \end{proof}

  \begin{remark}
    The key fact underpinning this proof (and \cite[Theorem 8.1]{BarWatZieg26}) is : if $W \subseteq \R^n$ is open, and $\{W_i\}_{i=1}^k$ is a finite collection of closed subsets of $W$, then $\bigcup W_i^\circ$ is dense in $W$. This can be proved with elementary point-set topology. The same conclusion holds if the collection $\{W_i\}$ is countable, but then we require the Baire category theorem.
  \end{remark}
  
  \begin{remark}
    If $\cl{G} \rra M$ is not regular, we already saw in Proposition \ref{prop:regularity-is-necessary} that a 0-metric $\eta$ cannot induce a weak Riemannian metric $\ol{\eta}$. We also note that $T_2M/T_2\cl{G}$ need not be isomorphic to $T_2(M/\cl{G})$. For example, take $M \coloneqq \R^2$, and consider the linear action of $S^1$ on $\R^2$ by counter-clockwise rotation. Then the fibre of $T_2\R^2/T_2S^1$ over the origin $[0]$ in $\R^2/S^1$ is the quotient of $\R^2 \times \R^2$ by the diagonal $S^1$-action:
    \begin{equation*}
       (v,w) \sim (\theta \cdot v, \theta \cdot w).
    \end{equation*}
    Thus, for instance, $[0,v,w] \neq [0,w,v]$ in $T_2\R^2/T_2S^1$ unless $v=w$. On the other hand, the reflection map
    \begin{equation*}
      r \colon \R^2 \to \R^2, \quad (x,y) \mapsto (y,x)
    \end{equation*}
    preserves the fibres of the quotient $\pi \colon \R^2 \to \R^2/S^1$, and therefore for any $(0,v,w) \in T_2\R$,
    \begin{equation*}
      T_2\pi(0,v,w) = T_2\pi(T_2r(0,v,w)) = T_2\pi(0,w,v).
    \end{equation*}
  \end{remark}
Finally, we arrive at our main theorems.
\begin{theorem}
  \label{thm:regular-proper-induces-metric}
    Let $X$ be a stack presented by a regular and proper Lie groupoid $\cl{G} \rra M$, and let $[\eta^2]$ be a Riemannian metric on $X$. Then $[\eta^2]$ induces a weak Riemannian metric on $M/\cl{G}$. Moreover, this metric is definite with respect to $\{\pi|_T \colon T \to M/\cl{G} \mid T \perp\cl{F}\}$.
  \end{theorem}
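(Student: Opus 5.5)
The plan is to reduce the theorem to results already established. Fix any representative $\eta^2$ of the class $[\eta^2]$, and let $\eta \coloneqq s_*(\pr_1)_*\eta^2$ be its associated 0-metric on $M$. By the definition of an equivalence class of 2-metrics inducing a metric, it suffices to show that this single 0-metric $\eta$ induces a weak Riemannian metric $\ol{\eta}$ in the sense of Definition \ref{def:induces}. Any other representative has the same map $\eta^N\colon N_2\cl{F}\to\R$, so it automatically induces the same $\ol{\eta}$.

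Since $\cl{G}$ is regular and proper, Proposition \ref{prop:lambda-1-is-iso-for-proper-regular} shows that $\lambda_1\colon T_2M/T_2\cl{G}\to T_2(M/\cl{G})$ is an isomorphism. We can then apply Proposition \ref{prop:lambda-1-iso-suffices}, whose hypotheses are exactly that $\eta$ is a 0-metric on a regular Lie groupoid and that $\lambda_1$ is an isomorphism. Concretely, the argument runs as follows:
\begin{itemize}
\item $\cl{G}$-invariance of $\eta^N$ makes it constant on the fibres of $\lambda_2$ (Remark \ref{rem:justifies-definition-of-induces}).
\item Injectivity of $\lambda_1$ then makes $\eta^N$ constant on the fibres of $\lambda=\lambda_1\circ\lambda_2$.
\item Hence there is a map $\ol{\eta}$ making Diagram \eqref{eq:diagram-for-induces} commute.
\end{itemize}
Lemma \ref{lem:induced-map-is-weak-metric} then shows that $\ol{\eta}$ is smooth, is a weak Riemannian metric, and is definite with respect to the transversals $\{\pi|_T\colon T\to M/\cl{G}\mid T\perp\cl{F}\}$. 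This is exactly the second claim of the theorem.

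There is also a question of independence of the presentation: the statement is phrased for a stack $X$ presented by $\cl{G}$. For that we can cite the preceding Morita invariance proposition, which shows that inducing a metric transports along Morita equivalences. This step is not strictly needed, since the theorem fixes the presentation $\cl{G}$.

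The main obstacle is entirely contained in Proposition \ref{prop:lambda-1-is-iso-for-proper-regular}, which is the stratification and density argument. Given that result, the proof is a short assembly of the earlier ones. The only care needed is to check that regularity is what makes $\eta^N$ smooth and fibrewise definite, so that Lemma \ref{lem:induced-map-is-weak-metric} applies.
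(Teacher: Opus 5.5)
Your proposal is correct and is essentially the paper's proof: the paper also obtains the theorem directly by combining Proposition~\ref{prop:lambda-1-is-iso-for-proper-regular} ($\lambda_1$ is an isomorphism for proper regular groupoids) with Proposition~\ref{prop:lambda-1-iso-suffices}. Your bullet points, via Remark~\ref{rem:justifies-definition-of-induces} and Lemma~\ref{lem:induced-map-is-weak-metric}, just write out the proof of the latter. One small correction: $\eta^N$ is fibrewise definite for any Lie groupoid, and regularity is needed only for the smoothness of $\eta^N$ and for the existence of transversals $T \perp \cl{F}$.
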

  \begin{proof}
    This is a direct consequence of Proposition \ref{prop:lambda-1-is-iso-for-proper-regular} and Proposition \ref{prop:lambda-1-iso-suffices}. 
  \end{proof}

  \begin{remark}
    The orbit space of a regular and proper Lie groupoid $\cl{G}$ is a diffeological orbifold. The metric $\ol{\eta}$ on $M/\cl{G}$ induced by $\eta^2$ makes $(M/\cl{G}, \ol{\eta})$ a Riemannian orbifold. 
  \end{remark}

  \begin{theorem}
    \label{thm:orbifold-groupoid-induces-metric}
    A Riemannian metric on a stack $X$ presented by an orbifold groupoid $\cl{G} \rra M$ descends to a weak Riemannian metric on $M/\cl{G}$ that is definite with respect to $\pi\colon M \to M/\cl{G}$. Conversely, a weak Riemannian metric $g$ on $M/\cl{G}$ that is definite with respect to $\pi$ pulls back to a 0-metric $\pi^*\eta$ on $\cl{G}$, which defines a metric on $X$ that induces $g$.  
  \end{theorem}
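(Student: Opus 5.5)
The plan is to split the statement into its two directions and reduce each to results already established. For the first direction, note that an orbifold groupoid $\cl{G} \rra M$ is \'{e}tale, hence its anchor has constant rank ($\dim \cl{G} = \dim M$ and all isotropy groups are discrete), so $\cl{G}$ is regular by Lemma \ref{lem:regular-definition}. It is also proper by definition. Theorem \ref{thm:regular-proper-induces-metric} then applies directly: any representative $\eta^2$ of the Riemannian metric on $X$ has a 0-metric $\eta$ that induces a weak Riemannian metric $\ol{\eta}$ on $M/\cl{G}$. Moreover, $\ol{\eta}$ is definite with respect to $\{\pi|_T \mid T \perp \cl{F}\}$.

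To upgrade this to definiteness with respect to the single map $\pi$, I observe that in the \'{e}tale case the foliation $\cl{F}$ is by points. Hence $T\cl{F} = 0$, and every open subset of $M$ is a transversal; in particular $M$ itself is one (after precomposing with charts, as in the proof of Proposition \ref{prop:sufficient-condition-for-lambda-1-iso}). Concretely, $N_2\cl{F} = T_2M$ and $\lambda = T_2\pi$, so the commutativity of Diagram \eqref{eq:diagram-for-induces} reads $\pi^*\ol{\eta} = \ol{\eta}\circ T_2\pi = \eta$. This is a Riemannian metric on $M$, which is exactly definiteness with respect to $\pi$.

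For the converse, let $g$ be a weak Riemannian metric on $M/\cl{G}$ that is definite with respect to $\pi$. Proposition \ref{prop:metric-on-orbifold-lifts} gives that $\eta \coloneqq \pi^*g$ is a Riemannian metric on $M$ invariant under $\Bis(\cl{G})$. By Example \ref{ex:0-metric-on-etale-groupoid}, this is precisely a 0-metric on the \'{e}tale groupoid $\cl{G}$. I would then take the 2-metric $\eta^2 \coloneqq \pr_1^* s^*\eta$, which induces $\eta$, as recalled from \cite{HoyFer18}. Its class $[\eta^2]$ is a Riemannian metric on $X$, and by the remark on \'{e}tale groupoids this class is determined by $\eta$. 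Finally, $\eta^N = \eta = g \circ T_2\pi = g\circ\lambda$, since $\lambda = T_2\pi$ here. So Diagram \eqref{eq:diagram-for-induces} commutes with $\ol{\eta} = g$, and $[\eta^2]$ induces $g$, exactly as in the example following Remark \ref{rem:justifies-definition-of-induces}.

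The only step needing care is the identification $\lambda = T_2\pi$ and $\eta^N = \eta$ in the \'{e}tale case. This requires checking that the Lie algebroid $A = \ker(Ts)|_M$ is zero, so that $A_2 \ltimes T_2M$ is the unit groupoid and $N_2\cl{F} = T_2M$ with $I = \mathrm{id}$. It follows immediately since $s$ is \'{e}tale. I would also confirm that the two constructions are mutually inverse on equivalence classes, which is immediate because an equivalence class of 2-metrics on an \'{e}tale groupoid is determined by its 0-metric, and $\eta \mapsto \ol{\eta}$, $g \mapsto \pi^*g$ are inverse by $\pi^*\ol{\eta} = \eta$ together with surjectivity of $T_2\pi$.
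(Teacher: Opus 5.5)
Your proposal is correct and follows the paper's proof. The first clause comes from Theorem \ref{thm:regular-proper-induces-metric}, and you add the explicit check that, since $\cl{F}$ is the foliation by points, $\lambda = T_2\pi$ and $\pi^*\ol{\eta} = \eta$, which gives definiteness with respect to $\pi$. The converse comes from Proposition \ref{prop:metric-on-orbifold-lifts} together with the 2-metric $\pr_1^*s^*\eta$ on the \'{e}tale groupoid. Your extra detail fills in what the paper calls a ``special case'' but does not change the route.
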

  \begin{proof}
    The first clause is a special case of Theorem \ref{thm:regular-proper-induces-metric}. The second follows from Proposition \ref{prop:metric-on-orbifold-lifts} and the fact that a 0-metric on an \'{e}tale Lie groupoid always comes from a 2-metric.
  \end{proof}

    In \cite[Section 7.3, (P3)]{KurSakShiob25}, the authors pose the following problem. Let $X$ be a diffeological $n$-orbifold, and adopt the notation of Example \ref{ex:build-orbifold-groupoid}. Consider the category $\cat{C}$ whose objects are the maps $\varphi_i \coloneqq \psi_i^{-1} \circ \pi_i \colon V_i \to X$, and whose morphisms are commutative triangles
    \begin{equation}
      \label{eq:morphism-in-plot-category}
      \begin{tikzcd}
        V_i & & V_j\\
        & X. &
        \ar["\varphi_i"', from=1-1, to=2-2]
        \ar["\varphi_j", from=1-3, to=2-2]
        \ar["f", from=1-1, to=1-3]
      \end{tikzcd}
    \end{equation}
    This is a subcategory of the category of plots of $X$. Note that each $f$ is a local diffeomorphism (this is a classical fact, see e.g., \cite[Corollary 2.15]{KarMiy25}), thus locally coincides with elements of the pseudogroup $\Psi$. This means that a Riemannian  metric $\eta$ on $\coprod V_i$ that is invariant under the action of $\Psi$, equivalently a 0-metric on $\Gamma(\Psi)$ by Example \ref{ex:0-metric-on-etale-groupoid}, induces a map  $\ti{\eta} \colon \colim_{\cat{C}} T_2V_i \to \R$. The authors of \cite{KurSakShiob25} ask: does there exist a weak Riemannian metric $\ol{\eta}$ lifting $\ti{\eta}$, as in the diagram below?
    \begin{equation*}
      \begin{tikzcd}
        \colim_{\cat{C}} T_2V_i & \R \\
        T_2X. &
        \ar["{\ti{\eta}}", from=1-1, to=1-2]
        \ar["\ol{\eta}"', from=2-1, to=1-2]
        \ar[from=1-1, to=2-1]
      \end{tikzcd}
    \end{equation*}

   We can answer ``yes.'' The colimit $\colim_{\cat{C}} T_2V_i$ is obtained by taking the quotient of $\coprod T_2V_i$ by the relation
    \begin{equation*}
      (v,w) \in T_2V_i \sim (Tf(v), Tf(w)) \in T_2V_j \text{ whenever } f \in \mor{C}(\varphi_i,\varphi_j).
    \end{equation*}
    But since each $f\in \mor{C}(\varphi_i,\varphi_j)$ locally coincides with elements of $\Psi$, the classes of this relation are exactly the orbits of $T_2\Gamma(\Psi)$; in other words, $\colim_{\cat{C}}T_2V_i \cong T_2 \left(\coprod V_i\right)/T_2\Gamma(\Psi)$. Therefore $\ol{\eta}$ exists if and only if the 0-metric $\eta$ induces a weak Riemannian metric on $X$, which is indeed the case by Theorem \ref{thm:orbifold-groupoid-induces-metric}. Furthermore, we see that $\ol{\eta}$ is definite with respect to the quotient $\coprod V_i \to X$, so that $(X, \ol{\eta})$ is a Riemannian diffeological orbifold.

  \subsection{Beyond orbifolds}
\label{sec:beyond-orbifolds}

A necessary condition for a 0-metric on $\cl{G} \rra M$ to induce a metric on $M/\cl{G}$ is that $\cl{G}$ is regular. A sufficient condition is that $\cl{G}$ is regular and proper. To close our discussion, we give another class of groupoids whose 0-metrics descend to the orbit space. We adopt the following terminology from \cite[Definition 5.4]{Ahm24}.

\begin{definition}
  A \define{diffeologically \'{e}tale map} is a smooth map $\pi\colon X \to Y$ which satisfies the following lifting condition: for every plot $p\colon U \to Y$, and every $r \in U$ and $x \in X$ with $p(r) = \pi(x)$, there exists a local lift $q\colon U \dashrightarrow X$ of $p$ along $\pi$ sending $r$ to $x$, and moreover the germ of such a lift is unique.
\end{definition}

\begin{remark}
  If $M$ is a manifold, the diffeologically \'{e}tale maps $\pi\colon M \to Y$ were introduced earlier under the name ``Q-charts'' in \cite{Bar73}.  
\end{remark}

\begin{example}
  If $\Gamma$ is a finite group acting smoothly and freely on a diffeological space $X$, then the quotient map $\pi\colon X \to X/\Gamma$ is diffeologically \'{e}tale. For a proof, see \cite[Example 2.7]{Miy24b} (this proof is for the case $X = \R$ and $\Gamma = \Z + \alpha \Z$, for $\alpha$ irrational, but generalizes \emph{mutatis mutandis}).  The key lemma is Serpinski's theorem \cite[Theorem 6.1.27]{Eng89}, which states that a partition of a connected and compact topological space into countably many disjoint subsets must have exactly one component.
\end{example}

\begin{proposition}
  \label{prop:metrics-descend-for-Q-groupoids}
  Suppose that $\cl{G} \rra M$ is an \'{e}tale Lie groupoid whose orbit map $\pi\colon M \to M/\cl{G}$ is diffeologically \'{e}tale. Then every 0-metric on $\cl{G}$ induces a Riemannian metric on $M/\cl{G}$.  Conversely, a weak Riemannian metric $g$ on $M/\cl{G}$ that is definite with respect to $\pi\colon M \to M/\cl{G}$ pulls back to a 0-metric $\pi^*\eta$ on $\cl{G}$, which induces a metric on $X$ that induces $g$.  
\end{proposition}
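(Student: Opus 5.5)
The plan is to reduce the forward direction to Proposition \ref{prop:lambda-1-iso-suffices}, and the converse to Proposition \ref{prop:metric-on-orbifold-lifts}. Since $\cl{G}$ is étale it is regular: its foliation is by points, so $N\cl{F} = TM$, $N_2\cl{F} = T_2M$, and $\lambda_2$ is just the quotient $T_2M \to T_2M/T_2\cl{G}$. By Proposition \ref{prop:lambda-1-iso-suffices}, it therefore suffices to show that $\lambda_1$ is injective. Since $\lambda_1$ is already a subduction, injectivity makes it an isomorphism.

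To get injectivity I will verify the hypothesis of Proposition \ref{prop:sufficient-condition-for-lambda-1-iso}, in the form of Remark \ref{rem:simpler-to-write-condition}. So let $(p,q)\colon U \to M\fiber{}{\cl{G}} M$ be a plot, meaning $\pi\circ p = \pi \circ q$, and fix $r \in U$. I want a neighbourhood $W$ of $r$ and a smooth lift $g\colon W \to \cl{G}$ with $t\circ g = p$ and $s \circ g = q$. The argument has three steps.

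\begin{enumerate}[label = (\arabic*)]
\item Choose an arrow $g_0$ from $q(r)$ to $p(r)$, and let $\sigma$ be the local bisection through $g_0$ given by étaleness.
\item Put $\tau \coloneqq t\circ\sigma$, a local diffeomorphism with $\pi\circ\tau = \pi$ near $q(r)$. Then $\tau\circ q$ and $p$ are both local lifts of the plot $\pi\circ p$ along $\pi$, and both send $r$ to $p(r)$.
\item By the uniqueness-of-germs clause in the definition of a diffeologically étale map, $p$ and $\tau\circ q$ agree on a neighbourhood $W$ of $r$. Hence $g \coloneqq \sigma\circ q|_W$ satisfies $s\circ g = q$ and $t\circ g = \tau\circ q = p$.
\end{enumerate}

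With this lift, $(T_2p,T_2q)|_{T_2W} = (T_2t,T_2s)\circ T_2g$, so its image lies in $T_2M\fiber{}{T_2\cl{G}}T_2M$. This is exactly the required condition. Proposition \ref{prop:sufficient-condition-for-lambda-1-iso} then shows that $\lambda_1$ is a diffeomorphism, and Proposition \ref{prop:lambda-1-iso-suffices} together with Lemma \ref{lem:induced-map-is-weak-metric} shows that every $0$-metric $\eta$ induces a weak Riemannian metric $\ol{\eta}$. This metric is definite with respect to the transversals. For an étale groupoid the transversals are just open subsets of $M$, so $\ol\eta$ is definite with respect to $\pi$.

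For the converse, Proposition \ref{prop:metric-on-orbifold-lifts} already gives that $\pi^*g$ is a Riemannian metric on $M$ invariant under $\Bis(\cl{G})$. By Example \ref{ex:0-metric-on-etale-groupoid}, this is the same as a $0$-metric, and it comes from the $2$-metric $\pr_1^*s^*\pi^*g$. Since $\lambda = T_2\pi$ here, we have $(\pi^*g)^N = g\circ T_2\pi = g\circ\lambda$, so $\pi^*g$ induces $g$, as in the example following Remark \ref{rem:justifies-definition-of-induces}.

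The main point needing care is step (3) of the lift. The germ-uniqueness clause must be applied to two lifts of the same plot $\pi\circ p$ through the same point $p(r)$, which is why $q$ is first corrected by $\tau$. One must also confirm that $\tau$ genuinely commutes with $\pi$ on its domain: this holds because $\tau(y)$ is the target of an arrow with source $y$.
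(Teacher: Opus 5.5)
Your proposal is correct and is essentially the paper's argument. You reduce via Propositions \ref{prop:lambda-1-iso-suffices} and \ref{prop:sufficient-condition-for-lambda-1-iso}, use a local bisection to match base points, invoke germ-uniqueness of lifts along $\pi$, and handle the converse as in Theorem \ref{thm:orbifold-groupoid-induces-metric}. The only cosmetic difference is that you package the conclusion as an explicit local lift $\sigma\circ q$ of $(p,q)$ through the anchor, whereas the paper, after the same bisection reduction, notes that $(T_2p,T_2q)$ lands in the diagonal of $T_2M\times T_2M$.
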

\begin{proof}
  By Proposition \ref{prop:lambda-1-iso-suffices} and Proposition \ref{prop:sufficient-condition-for-lambda-1-iso}, it suffices to show that if $(p,q)\colon U \to M \fiber{}{\cl{G}} M$ is a plot, then so is $(T_2p,T_2q)\colon T_2U \to T_2M\fiber{}{T_2\cl{G}} T_2M$, so take such a plot $(p,q)$. Exactly as in the first part of the proof of Proposition \ref{prop:lambda-1-is-iso-for-proper-regular} (which used only the fact that $\cl{G}$ is a Lie groupoid), we may work locally near $r \in U$, and assume that $p(r) = q(r) = x$.

  Our assumption on $(p,q)$ is equivalent to the equality $\pi \circ p = \pi\circ q$. Since $\pi$ is diffeologically \'{e}tale, this means that the germs of $p$ and $q$ are identical at $r$. But then, at least for $v,w \in T_rU$, we must have $T_2p(v,w) = T_2q(v,w)$. In other words, $(T_2p,T_2q)$ maps $T_rU \times T_rU$ into the diagonal in $T_2M\times T_2M$, and in particular into $T_2M\fiber{}{T_2\cl{G}} T_2M$. Since this holds for each $r$, the proof is complete.

  The converse holds for the exact same reason it holds in Theorem \ref{thm:orbifold-groupoid-induces-metric}. 
\end{proof}

\begin{example}
  Take $\Gamma_\alpha$ to be the subgroup $\Z + \alpha \Z$ of $\R$, for $\alpha$ irrational. The action groupoid $\R \rtimes \Gamma_\alpha \rra \R$ satisfies the preconditions of Proposition \ref{prop:metrics-descend-for-Q-groupoids}. Therefore any 0-metric on $\R \rtimes \Gamma_\alpha$ descends to a Riemannian metric on $T_\alpha = \R / \Gamma_\alpha$. Since the orbits of $\Gamma_\alpha$ are dense in $\R$, and $\Gamma_\alpha$ acts by translation, the only 0-metric on $\R$ (up to scaling) is the standard Euclidean metric. By the converse to Proposition \ref{prop:metrics-descend-for-Q-groupoids}, the only weak Riemannian metric on $T_\alpha$ (up to scaling) is the one induced by the standard Euclidean metric. 
\end{example}

\printbibliography

@article{Ahm24,
  AUTHOR =	 {Ahmadi, Alireza},
  TITLE =	 {Submersions, immersions, and \'etale maps in
                  diffeology},
  JOURNAL =	 {Indag. Math. (N.S.)},
  FJOURNAL =	 {Koninklijke Nederlandse Akademie van Wetenschappen.
                  Indagationes Mathematicae. New Series},
  VOLUME =	 35,
  YEAR =	 2024,
  NUMBER =	 3,
  PAGES =	 {459--499},
  ISSN =	 {0019-3577,1872-6100},
  MRCLASS =	 {58A40 (18F40)},
  MRNUMBER =	 4755464,
  DOI =		 {10.1016/j.indag.2024.03.004},
  URL =		 {https://doi.org/10.1016/j.indag.2024.03.004},
}

@article{BaezHof11,
  author =	 {Baez, John C. and Hoffnung, Alexander E.},
  title =	 {Convenient categories of smooth spaces},
  fjournal =	 {Transactions of the American Mathematical Society},
  journal =	 {Trans. Am. Math. Soc.},
  issn =	 {0002-9947},
  volume =	 363,
  number =	 11,
  pages =	 {5789--5825},
  year =	 2011,
  language =	 {english},
  doi =		 {10.1090/S0002-9947-2011-05107-X},
  zbMATH =	 5986701,
  Zbl =		 {1237.58006},
  MRCLASS =	 {18F15 (18F10 18F20 58A40)},
  MRNUMBER =	 {2817410},
  MRREVIEWER =	 {Christopher\ L.\ Rogers},
}

@article{Bar73,
  author =	 {Barre, Raymond},
  title =	 {De quelques aspects de la th{\'e}orie des
                  {Q}-vari{\'e}t{\'e}s diff{\'e}rentielles et
                  analytiques},
  fjournal =	 {Annales de l'Institut Fourier},
  journal =	 {Ann. Inst. Fourier},
  issn =	 {0373-0956},
  volume =	 23,
  number =	 3,
  pages =	 {227--312},
  year =	 1973,
  language =	 {French},
  doi =		 {10.5802/aif.478},
  url =		 {https://eudml.org/doc/74140},
  zbMATH =	 3408582,
  Zbl =		 {0258.57008},
  MRCLASS =	 {57E99},
  MRNUMBER =	 {348780},
  MRREVIEWER =	 {S.\ S.\ Cairns},
}

@article{BarWatZieg26,
  author =	 {Barbieri, Gabriele and Watts, Jordan and Ziegler,
                  Fran{\c{c}}ois},
  title =	 {Remarks on diffeological {Frobenius} reciprocity},
  fjournal =	 {Transactions of the American Mathematical Society},
  journal =	 {Trans. Am. Math. Soc.},
  issn =	 {0002-9947},
  volume =	 379,
  number =	 4,
  pages =	 {2887--2917},
  year =	 2026,
  language =	 {English},
  doi =		 {10.1090/tran/9565},
  zbMATH =	 8173196
}

@incollection{Bloh24,
  author =	 {Blohmann, Christian},
  title =	 {Elastic diffeological spaces},
  booktitle =	 {Recent advances in diffeologies and their
                  applications},
  isbn =	 {978-1-4704-7607-6},
  series =	 {Contemp. Math.},
  volume =	 {794},
  pages =	 {49--86},
  year =	 2024,
  publisher =	 {Amer. Math. Soc., [Providence], RI},
  language =	 {english},
  doi =		 {10.1090/conm/794/15925},
  zbMATH =	 7807743,
  Zbl =		 {1533.58003},
  MRCLASS =	 {58A40 (18F15 18F40 58A03)},
  MRNUMBER =	 4712597,
  MRREVIEWER =	 {A.\ Dehghan\ Nezhad},
}

@unpublished{Bloh24b,
  AUTHOR =	 {Christian Blohmann},
  TITLE =	 {Lagrangian field theory},
  YEAR =	 2024,
  URL =
                  {https://people.mpim-bonn.mpg.de/blohmann/assets/pdf/Lagrangian_Field_Theory_v24.0.pdf},
  NOTE =	 {Online lecture notes},
}

@misc{Car22,
  TITLE =	 {Introduction to orbifolds},
  AUTHOR =	 {Francisco C. Caramello},
  YEAR =	 2022,
  EPRINT =	 {1909.08699},
  ARCHIVEPREFIX ={arXiv},
  PRIMARYCLASS = {math.DG},
  URL =		 {https://arxiv.org/abs/1909.08699},
}

@article{CrainMes18,
  author =	 {Crainic, Marius and Mestre, Jo{\~a}o Nuno},
  title =	 {Orbispaces as differentiable stratified spaces},
  fjournal =	 {Letters in Mathematical Physics},
  journal =	 {Lett. Math. Phys.},
  issn =	 {0377-9017},
  volume =	 108,
  number =	 3,
  pages =	 {805--859},
  year =	 2018,
  language =	 {English},
  doi =		 {10.1007/s11005-017-1011-6},
  zbMATH =	 6861486,
  Zbl =		 {1390.58012}
}

@article{CrainStr13,
  IDS = 	 {CS13},
  AUTHOR =	 {Crainic, Marius and Struchiner, Ivan},
  TITLE =	 {On the linearization theorem for proper {L}ie
                  groupoids},
  JOURNALTITLE = {Ann. Sci. \'{E}c. Norm. Sup\'{e}r. (4)},
  FJOURNALTITLE ={Annales Scientifiques de l'\'{E}cole Normale
                  Sup\'{e}rieure. Quatri\`{e}me S\'{e}rie},
  VOLUME =	 46,
  YEAR =	 2013,
  NUMBER =	 5,
  PAGES =	 {723--746},
  ISSN =	 {0012-9593},
  MRCLASS =	 {58H05},
  MRNUMBER =	 3185351,
  DOI =		 {10.24033/asens.2200},
  URL =
                  {https://doi-org.myaccess.library.utoronto.ca/10.24033/asens.2200},
}

@book{Eng89,
  AUTHOR =	 {Engelking, Ryszard},
  TITLE =	 {General topology},
  SERIES =	 {Sigma Series in Pure Mathematics},
  VOLUME =	 6,
  EDITION =	 {Second},
  NOTE =	 {Translated from the Polish by the author},
  PUBLISHER =	 {Heldermann Verlag, Berlin},
  YEAR =	 1989,
  PAGES =	 {viii+529},
  ISBN =	 {3-88538-006-4},
  MRCLASS =	 {54-01 (54-02)},
  MRNUMBER =	 1039321,
  MRREVIEWER =	 {Gary\ Gruenhage},
}

@article{GalGualHecRev89,
  AUTHOR =	 {Gallego, E. and Gualandri, L. and Hector, G. and
                  Revent\'os, A.},
  TITLE =	 {Groupo\"ides riemanniens},
  JOURNAL =	 {Publ. Mat.},
  FJOURNAL =	 {Publicacions Matem\`atiques},
  VOLUME =	 33,
  YEAR =	 1989,
  NUMBER =	 3,
  PAGES =	 {417--422},
  ISSN =	 {0214-1493,2014-4350},
  MRCLASS =	 {57R30 (53C12 58F18 58H05)},
  MRNUMBER =	 1038480,
  MRREVIEWER =	 {Harry\ F.\ Hoke, III},
  DOI =		 {10.5565/PUBLMAT\_33389\_03},
  URL =		 {https://doi.org/10.5565/PUBLMAT_33389_03},
}

@misc{GolWel21,
  title =	 {Towards optimization techniques on diffeological
                  spaces by generalizing Riemannian concepts},
  author =	 {Nico Goldammer and Kathrin Welker},
  year =	 2021,
  eprint =	 {2009.04262},
  archivePrefix ={arXiv},
  primaryClass = {math.OC},
  url =		 {https://arxiv.org/abs/2009.04262},
}

@article {GuerIgl26,
  AUTHOR =	 {G\"urer, Serap and Iglesias-Zemmour, Patrick},
  TITLE =	 {On the diffeology of orbit spaces},
  JOURNAL =	 {Differential Geom. Appl.},
  FJOURNAL =	 {Differential Geometry and its Applications},
  VOLUME =	 103,
  YEAR =	 2026,
  PAGES =	 {Paper No. 102391, 16},
  ISSN =	 {0926-2245,1872-6984},
  MRCLASS =	 {57S15 (58A35 58A40)},
  MRNUMBER =	 5069344,
  DOI =		 {10.1016/j.difgeo.2026.102391},
  URL =		 {https://doi.org/10.1016/j.difgeo.2026.102391},
}

@misc{Hol25,
  TITLE =	 {Riemannian groupoids},
  AUTHOR =	 {Holtrop, Sven},
  YEAR =	 2025,
  NOTE =	 {Masters thesis, Utrecht University}
}

@article{HoyFer18,
  author =	 {{del Hoyo}, Matias and Fernandes, Rui Loja},
  title =	 {Riemannian metrics on {Lie} groupoids},
  fjournal =	 {Journal f{\"u}r die Reine und Angewandte Mathematik},
  journal =	 {J. Reine Angew. Math.},
  issn =	 {0075-4102},
  volume =	 735,
  pages =	 {143--173},
  year =	 2018,
  language =	 {English},
  doi =		 {10.1515/crelle-2015-0018},
  zbMATH =	 6836105,
  Zbl =		 {1385.53076}
}

@article{HoyLoj19,
  IDS = 	 {HF19},
  AUTHOR =	 {{del Hoyo}, Matias and Fernandes, Rui Loja},
  TITLE =	 {Riemannian metrics on differentiable stacks},
  JOURNALTITLE = {Math. Z.},
  JOURNAL = {Math. Z.},
  FJOURNALTITLE ={Mathematische Zeitschrift},
  VOLUME =	 292,
  YEAR =	 2019,
  NUMBER =	 {1-2},
  PAGES =	 {103--132},
  ISSN =	 {0025-5874},
  MRCLASS =	 {58H05 (14A20 18F99)},
  MRNUMBER =	 3968895,
  MRREVIEWER =	 {Paulo Carrillo Rouse},
  DOI =		 {10.1007/s00209-018-2154-6},
  URL =
                  {https://doi-org.myaccess.library.utoronto.ca/10.1007/s00209-018-2154-6},
}

@book{Igl13,
  author =	 {Iglesias-Zemmour, Patrick},
  title =	 {Diffeology},
  fseries =	 {Mathematical Surveys and Monographs},
  series =	 {Math. Surv. Monogr.},
  issn =	 {0076-5376},
  volume =	 {185},
  isbn =	 {978-0-8218-9131-5},
  year =	 {2013},
  pages =	 {xxiv+439},
  publisher =	 {Providence, RI: American Mathematical Society (AMS)},
  language =	 {english},
  zbMATH =	 {6098052},
  Zbl =		 {1269.53003},
  doi =		 {10.1090/surv/185},
  MRCLASS =	 {58-02 (53Dxx 55Pxx 58A10 58A40)},
  MRNUMBER =	 3025051,
  MRREVIEWER =	 {Daniel Belti\c{t}\u{a}},
}

@book{Igl22,
  AUTHOR =	 {Iglesias-Zemmour, Patrick},
  TITLE =	 {Diffeology},
  PUBLISHER =	 {Beijing World Publishing Corp.},
  YEAR =	 2022
}

@misc{Igl23,
  TITLE =	 {On Riemannian metric in diffeology},
  AUTHOR =	 {Iglesias-Zemmour, Patrick},
  YEAR =	 2023,
  URL =
                  {https://math.huji.ac.il/~piz/documents/DBlog-Ex-ORMID.pdf},
  NOTE =	 {preprint}
}

@article{IglKarZad10,
  author =	 {Iglesias, Patrick and Karshon, Yael and Zadka,
                  Moshe},
  title =	 {Orbifolds as diffeologies},
  fjournal =	 {Transactions of the American Mathematical Society},
  journal =	 {Trans. Am. Math. Soc.},
  issn =	 {0002-9947},
  volume =	 {362},
  number =	 {6},
  pages =	 {2811--2831},
  year =	 {2010},
  language =	 {english},
  doi =		 {10.1090/S0002-9947-10-05006-3},
  zbMATH =	 {5718348},
  Zbl =		 {1197.57025},
  MRCLASS =	 {57R18},
  MRNUMBER =	 2592936,
  MRREVIEWER =	 {Vadim V. Shurygin},
}

@article{IglLaf18,
  IDS = 	 {IZL18},
  AUTHOR =	 {Iglesias-Zemmour, Patrick and Laffineur,
                  Jean-Pierre},
  TITLE =	 {Noncommutative geometry and diffeology: the case of
                  orbifolds},
  JOURNALTITLE = {J. Noncommut. Geom.},
  FJOURNALTITLE ={Journal of Noncommutative Geometry},
  VOLUME =	 12,
  YEAR =	 2018,
  NUMBER =	 4,
  PAGES =	 {1551--1572},
  ISSN =	 {1661-6952},
  MRCLASS =	 {58B34 (57R18)},
  MRNUMBER =	 3896235,
  DOI =		 {10.4171/JNCG/319},
  URL =
                  {https://doi-org.myaccess.library.utoronto.ca/10.4171/JNCG/319},
}

@article{IglPrat21,
  IDS =		 {IZP20},
  AUTHOR =	 {Iglesias-Zemmour, Patrick and Prato, Elisa},
  TITLE =	 {Quasifolds, diffeology and noncommutative geometry},
  JOURNALTITLE = {J. Noncommut. Geom.},
  JOURNAL =	 {J. Noncommut. Geom.},
  FJOURNALTITLE ={Journal of Noncommutative Geometry},
  VOLUME =	 15,
  YEAR =	 2021,
  NUMBER =	 2,
  PAGES =	 {735--759},
  ISSN =	 {1661-6952},
  MRCLASS =	 {58B34 (58A40)},
  MRNUMBER =	 4325720,
  MRREVIEWER =	 {Hirokazu Nishimura},
  DOI =		 {10.4171/jncg/419},
  URL =
                  {https://doi-org.myaccess.library.utoronto.ca/10.4171/jncg/419},
}

@article{Jor82,
  AUTHOR =	 {Joris, Henri},
  TITLE =	 {Une {${\mathcal C}^{\infty }$}-application
                  non-immersive qui poss\`{e}de la propri\'{e}t\'{e}
                  universelle des immersions},
  JOURNALTITLE = {Arch. Math. (Basel)},
  FJOURNALTITLE ={Archiv der Mathematik},
  VOLUME =	 39,
  YEAR =	 1982,
  NUMBER =	 3,
  PAGES =	 {269--277},
  ISSN =	 {0003-889X},
  MRCLASS =	 {58C25},
  MRNUMBER =	 682456,
  MRREVIEWER =	 {Sadayuki Yamamuro},
  DOI =		 {10.1007/BF01899535},
  URL =
                  {https://doi-org.myaccess.library.utoronto.ca/10.1007/BF01899535},
}

@article{KarMiy25,
  AUTHOR =	 {Karshon, Yael and Miyamoto, David},
  TITLE =	 {Quasifold groupoids and diffeological quasifolds},
  JOURNAL =	 {Transform. Groups},
  FJOURNAL =	 {Transformation Groups},
  VOLUME =	 30,
  YEAR =	 2025,
  NUMBER =	 3,
  PAGES =	 {1333--1367},
  ISSN =	 {1083-4362,1531-586X},
  MRCLASS =	 {22A22 (58H05)},
  MRNUMBER =	 4960077,
  DOI =		 {10.1007/s00031-023-09826-z},
  URL =		 {https://doi.org/10.1007/s00031-023-09826-z},
}

@article{KarMiyWat24,
  IDS =		 {KMW22,KarMiyWat22},
  AUTHOR =	 {Karshon, Yael and Miyamoto, David and Watts, Jordan},
  TITLE =	 {Diffeological submanifolds and their friends},
  JOURNAL =	 {Differential Geom. Appl.},
  FJOURNAL =	 {Differential Geometry and its Applications},
  VOLUME =	 96,
  YEAR =	 2024,
  PAGES =	 {Paper No. 102170},
  ISSN =	 {0926-2245,1872-6984},
  MRCLASS =	 {57R55 (58A40)},
  MRNUMBER =	 4781028,
  DOI =		 {10.1016/j.difgeo.2024.102170},
  URL =		 {https://doi.org/10.1016/j.difgeo.2024.102170}
}

@book{Kih23,
  author =	 {Kihara, Hiroshi},
  title =	 {Smooth homotopy of infinite-dimensional
                  {{\(C^{\infty}\)}}-manifolds},
  fseries =	 {Memoirs of the American Mathematical Society},
  series =	 {Mem. Am. Math. Soc.},
  issn =	 {0065-9266},
  volume =	 289,
  isbn =	 {978-1-4704-6542-1},
  year =	 2023,
  number =	 {1436},
  pages =	 {vii+129},
  publisher =	 {Providence, RI: American Mathematical Society},
  language =	 {english},
  doi =		 {10.1090/memo/1436},
  zbMATH =	 7753148,
  Zbl =		 {1542.58001},
  MRCLASS =	 {58B05 (18N40 58A40)},
  MRNUMBER =	 {4632309},
  MRREVIEWER =	 {Cenap\ \"Ozel},
}

@book{KriegMic97,
  AUTHOR =	 {Kriegl, Andreas and Michor, Peter W.},
  TITLE =	 {The convenient setting of global analysis},
  SERIES =	 {Mathematical Surveys and Monographs},
  VOLUME =	 53,
  PUBLISHER =	 {American Mathematical Society, Providence, RI},
  YEAR =	 1997,
  PAGES =	 {x+618},
  ISBN =	 {0-8218-0780-3},
  MRCLASS =	 {58Bxx (46-02 46Gxx 46M20 58C15)},
  MRNUMBER =	 1471480,
  MRREVIEWER =	 {Olga\ Gil-Medrano},
  DOI =		 {10.1090/surv/053},
  URL =		 {https://doi.org/10.1090/surv/053},
}

@article{KurSakShiob25,
  title =	 {Towards Riemannian diffeology},
  DOI =		 {10.1017/prm.2025.10114},
  journal =	 {Proceedings of the Royal Society of Edinburgh:
                  Section A Mathematics},
  author =	 {Kuribayashi, Katsuhiko and Sakai, Keiichi and
                  Shiobara, Yusuke},
  year =	 2025,
  pages =	 {1–30}
}

@article{Ler10,
  AUTHOR =	 {Lerman, Eugene},
  TITLE =	 {Orbifolds as stacks?},
  JOURNALTITLE = {Enseign. Math. (2)},
  JOURNAL =	 {Enseign. Math. (2)},
  FJOURNALTITLE ={L'Enseignement Math\'{e}matique. Revue
                  Internationale. 2e S\'{e}rie},
  VOLUME =	 56,
  YEAR =	 2010,
  NUMBER =	 {3-4},
  PAGES =	 {315--363},
  ISSN =	 {0013-8584},
  MRCLASS =	 {18D05 (22A22)},
  MRNUMBER =	 2778793,
  MRREVIEWER =	 {Chenchang\ Zhu},
  DOI =		 {10.4171/LEM/56-3-4},
  URL =		 {https://doi.org/10.4171/LEM/56-3-4},
}

@misc{Miy24b,
  TITLE =	 {Lie groupoids determined by their orbit spaces},
  AUTHOR =	 {David Miyamoto},
  YEAR =	 2024,
  EPRINT =	 {2310.11968},
  ARCHIVEPREFIX ={arXiv},
  PRIMARYCLASS = {math.DG},
  NOTE =	 {To appear in J. Noncommut. Geom.}
}

@misc{Miy25,
  title =	 {Lie algebras of quotient groups},
  author =	 {David Miyamoto},
  year =	 2025,
  eprint =	 {2502.10260},
  archivePrefix ={arXiv},
  primaryClass = {math.DG},
  url =		 {https://arxiv.org/abs/2502.10260},
}

@book{MoerMrc03,
  IDS = 	 {MM03},
  AUTHOR =	 {Moerdijk, Ieke and Mr\v{c}un, Janez},
  TITLE =	 {Introduction to foliations and {L}ie groupoids},
  SERIES =	 {Cambridge Studies in Advanced Mathematics},
  VOLUME =	 91,
  PUBLISHER =	 {Cambridge University Press, Cambridge},
  YEAR =	 2003,
  PAGES =	 {x+173},
  ISBN =	 {0-521-83197-0},
  MRCLASS =	 {58H05 (17B99 57R30)},
  MRNUMBER =	 2012261,
  MRREVIEWER =	 {Jan\ Kubarski},
  DOI =		 {10.1017/CBO9780511615450},
  URL =		 {https://doi.org/10.1017/CBO9780511615450},
}

@article{PflPosTan14,
  IDS = 	 {PPT14},
  AUTHOR =	 {Pflaum, Markus J. and Posthuma, Hessel and Tang,
                  Xiang},
  TITLE =	 {Geometry of orbit spaces of proper {L}ie groupoids},
  JOURNALTITLE = {J. Reine Angew. Math.},
  FJOURNALTITLE ={Journal f\"{u}r die Reine und Angewandte
                  Mathematik. [Celle's Journal]},
  VOLUME =	 694,
  YEAR =	 2014,
  PAGES =	 {49--84},
  ISSN =	 {0075-4102},
  MRCLASS =	 {58H05 (22A22 53C12)},
  MRNUMBER =	 3259039,
  MRREVIEWER =	 {Zhuo Chen},
  DOI =		 {10.1515/crelle-2012-0092},
  URL =
                  {https://doi-org.myaccess.library.utoronto.ca/10.1515/crelle-2012-0092},
}

@article{Rein59,
  IDS = 	 {Rei59},
  AUTHOR =	 {Reinhart, Bruce L.},
  TITLE =	 {Foliated manifolds with bundle-like metrics},
  JOURNALTITLE = {Ann. of Math. (2)},
  FJOURNALTITLE ={Annals of Mathematics. Second Series},
  VOLUME =	 69,
  YEAR =	 1959,
  PAGES =	 {119--132},
  ISSN =	 {0003-486X},
  MRCLASS =	 {53.00},
  MRNUMBER =	 107279,
  MRREVIEWER =	 {M. P. Gaffney},
  DOI =		 {10.2307/1970097},
  URL =
                  {https://doi-org.myaccess.library.utoronto.ca/10.2307/1970097},
}

@article{Sat56,
  AUTHOR =	 {Satake, Ichir\^{o}},
  TITLE =	 {On a generalization of the notion of manifold},
  JOURNALTITLE = {Proc. Nat. Acad. Sci. U.S.A.},
  JOURNAL =	 {Proc. Nat. Acad. Sci. U.S.A.},
  FJOURNALTITLE ={Proceedings of the National Academy of Sciences of
                  the United States of America},
  VOLUME =	 42,
  YEAR =	 1956,
  PAGES =	 {359--363},
  ISSN =	 {0027-8424},
  MRCLASS =	 {55.0X},
  MRNUMBER =	 79769,
  MRREVIEWER =	 {H.\ Samelson},
  DOI =		 {10.1073/pnas.42.6.359},
  URL =		 {https://doi.org/10.1073/pnas.42.6.359},
}

@article{Sat57,
  AUTHOR =	 {Satake, Ichir\^{o}},
  TITLE =	 {The {G}auss-{B}onnet theorem for {$V$}-manifolds},
  JOURNALTITLE = {J. Math. Soc. Japan},
  JOURNAL =	 {J. Math. Soc. Japan},
  FJOURNALTITLE ={Journal of the Mathematical Society of Japan},
  VOLUME =	 9,
  YEAR =	 1957,
  PAGES =	 {464--492},
  ISSN =	 {0025-5645,1881-1167},
  MRCLASS =	 {53.00},
  MRNUMBER =	 95520,
  MRREVIEWER =	 {C.\ B.\ Allendoerfer},
  DOI =		 {10.2969/jmsj/00940464},
  URL =		 {https://doi.org/10.2969/jmsj/00940464},
}

@book{Sch23,
  AUTHOR =	 {Schmeding, Alexander},
  TITLE =	 {An introduction to infinite-dimensional differential
                  geometry},
  SERIES =	 {Cambridge Studies in Advanced Mathematics},
  VOLUME =	 202,
  PUBLISHER =	 {Cambridge University Press, Cambridge},
  YEAR =	 2023,
  PAGES =	 {xiv+267},
  ISBN =	 {978-1-316-51488-7},
  MRCLASS =	 {58-02 (22E65 46-02 53-02 58B25 58D05 58D15 58H05)},
  MRNUMBER =	 4505843,
  MRREVIEWER =	 {Thomas\ O.\ Rot},
}

@incollection{Sour80,
  author =	 {Souriau, Jean-Marie},
  title =	 {Groupes diff\'{e}rentiels},
  booktitle =	 {Differential geometrical methods in mathematical
                  physics ({P}roc. {C}onf.,
                  {A}ix-en-{P}rovence/{S}alamanca, 1979)},
  series =	 {Lecture Notes in Math},
  volume =	 836,
  pages =	 {pp 91--128},
  publisher =	 {Springer, Berlin-New York},
  year =	 1980,
  zbMATH =	 {3789519},
  Zbl =		 {0501.58010},
  MRCLASS =	 {22E99 (58F06 81B05)},
  MRNUMBER =	 {607688},
}

\end{document}